\setlist[enumerate]{itemsep=2pt,parsep=2pt,before={\parskip=2pt}}
\crefname{equation}{}{}
\newtheorem{theorem}{Theorem}[section]
\newtheorem*{theorem*}{Theorem}
\newtheorem*{definition*}{Definition}
\newtheorem{proposition}[theorem]{Proposition}
\newtheorem{lemma}[theorem]{Lemma}
\newtheorem{corollary}[theorem]{Corollary}
\theoremstyle{definition}
\newtheorem{definition}[theorem]{Definition}
\newtheorem{remark}[theorem]{Remark}
\newtheorem{notation}[theorem]{Notation}
\let\originalleft\left
\let\originalright\right
\renewcommand{\left}{\mathopen{}\mathclose\bgroup\originalleft}
\renewcommand{\right}{\aftergroup\egroup\originalright}
\renewcommand{\Re}{\operatorname{Re}}
\renewcommand{\Im}{\operatorname{Im}}
\newcommand{\R}{\mathbb{R}}
\newcommand{\C}{\mathbb{C}}
\newcommand{\Z}{\mathbb{Z}}
\newcommand{\N}{\mathbb{N}}
\renewcommand{\geq}{\geqslant}
\renewcommand{\leq}{\leqslant}
\renewcommand{\phi}{\varphi}
\newcommand{\eps}{\varepsilon}
\newcommand{\vol}{\operatorname{vol}}
\renewcommand{\Pr}[1]{\mathbb{P}\left(#1\right)}
\newcommand{\Prbis}{\mathbb{P}}
\newcommand{\E}[1]{\mathbb{E}\left[#1\right]}
\DeclareMathOperator*{\Ebis}{{}\mathbb{E}}
\newcommand{\norm}[1]{\left\|#1\right\|}
\newcommand{\inner}[1]{\left\langle#1\right\rangle}
\newcommand{\abs}[1]{\left|#1\right|}
\newcommand{\ind}[1]{\mathbf{1}_{#1}}
\newcommand{\spmod}[1]{\,(\mathrm{mod}\, #1)}
\newcommand{\order}{\mathrm{ord}}
\newcommand{\Nb}{\mathcal{N}}
\newcommand{\theM}{{M_*}}
\newcommand{\n}{\mathbf{n}}
\renewcommand{\b}{\mathbf{b}}
\newcommand{\x}{\mathbf{x}}
\newcommand{\z}{\mathbf{z}}
\renewcommand{\L}{\mathcal{L}}
\newcommand{\M}{\mathcal{M}_{\scriptscriptstyle{\mathrm{RSA}}}}
\newcommand{\mgp}[1]{(\Z/#1\Z)^{\times}}
\newcommand{\rect}[2]{\left\{s\in \C\ : \ {#1}< \Re{s} \leq 1,\ \abs{\Im{s}} \leq {#2}\right\}}
\begin{document}

\title[Unconditional correctness of recent quantum algorithms]{Unconditional correctness of recent quantum algorithms for factoring and computing discrete logarithms}

\author{C\'edric Pilatte}
\address{Mathematical Institute, University of Oxford.}
\email{cedric.pilatte@maths.ox.ac.uk}

\begin{abstract}
	In 1994, Shor introduced his famous quantum algorithm to factor integers and compute discrete logarithms in polynomial time. In 2023, Regev proposed a multi-dimensional version of Shor's algorithm that requires far fewer quantum gates. His algorithm relies on a number-theoretic conjecture on the elements in $(\mathbb{Z}/N\mathbb{Z})^{\times}$ that can be written as short products of very small prime numbers. We prove a version of this conjecture using tools from analytic number theory such as zero-density estimates. As a result, we obtain an unconditional proof of correctness of this improved quantum algorithm and of subsequent variants.
\end{abstract}

\maketitle

\section{Introduction}

\subsection{Context and quantum computing results}
Public key cryptography has become a crucial element of our global digital communication infrastructure. Notable examples include the Diffie-Hellman key exchange~\cite{DH} and the RSA (Rivest-Shamir-Adleman) cryptosystem~\cite{RSA}, which rely on the difficulty of finding discrete logarithms and factoring large numbers, respectively.

However, in 1994, Peter Shor~\cite{shor} developed an algorithm capable of efficiently solving these problems using a quantum computer.

\begin{theorem*}[Shor\protect\footnote{The version of Shor's algorithm stated here incorporates some small improvements from \cite{shorbis} (bounded number of calls) and \cite{annalsmult} (fast integer multiplication). The number of qubits in Shor's algorithm can be brought down to $O(n)$, though this typically requires a larger number of quantum gates (see e.g.~\cite{gid,fastmult}).}]
	There is a quantum circuit having $O(n^{2} \log n)$ quantum gates and $O(n \log n)$ qubits with the following property. There is a classical randomised polynomial-time algorithm that solves the factoring problem
	\begin{align*}
		 & \mathbf{Input: }\text{ a composite integer $N \leq 2^n$} \\
		 & \mathbf{Output: }\text{ a non-trivial divisor of $N$}
	\end{align*}
	using $O(1)$ calls to this quantum circuit, and succeeds with probability $\Theta(1)$.
\end{theorem*}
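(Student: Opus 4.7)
The plan is to reduce factoring to quantum order finding, following Shor's original argument, and to verify that each step fits the claimed gate, qubit, and call-count budgets.

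First I would carry out the classical reduction. A polynomial-time classical preprocessing disposes of the cases when $N$ is even or a non-trivial prime power. Otherwise I sample $a$ uniformly from $\{2, \ldots, N-1\}$; if $\gcd(a,N) > 1$ we are done, so assume $a \in \mgp{N}$. Let $r$ denote the multiplicative order of $a$ modulo $N$. A standard CRT argument using the structure of $\mgp{N}$ (exploiting that $N$ has at least two distinct odd prime factors) shows that with probability at least $1/2$ over the choice of $a$, $r$ is even and $a^{r/2} \not\equiv -1 \spmod{N}$; in that case $\gcd(a^{r/2}-1, N)$ is a non-trivial divisor of $N$. Hence $O(1)$ independent trials suffice, provided the order $r$ can be computed in each trial.

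Second, I would describe the quantum order-finding circuit. On $t = 2n + O(1)$ control qubits and $n$ work qubits, I would prepare $2^{-t/2} \sum_x |x\rangle |0\rangle$ by Hadamard gates, apply a reversible modular exponentiation $|x\rangle|0\rangle \mapsto |x\rangle|a^x \spmod{N}\rangle$, then apply the QFT on the first register and measure. Modular exponentiation by repeated squaring uses $O(n)$ modular multiplications of $n$-bit integers; using the fast integer multiplication algorithm of \cite{annalsmult}, each multiplication costs $O(n \log n)$ gates, giving the $O(n^2 \log n)$ total. An approximate QFT on $O(n)$ qubits contributes only $O(n \log n)$ gates. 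A reversible in-place arithmetic implementation fits in $O(n \log n)$ qubits.

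Third, I would analyse the classical post-processing. A standard Fej\'er-kernel calculation shows that with probability $\Omega(1)$ the measured outcome $y$ satisfies $|y/2^t - s/r| \leq 2^{-t-1}$ for some integer $s$ coprime to $r$. Since $t \geq 2n$ and $r \leq N \leq 2^n$, the fraction $s/r$ is the unique rational with denominator at most $N$ within that window, so the continued fraction expansion of $y/2^t$ recovers $s/r$ in lowest terms in polynomial time, and $r$ is read off its denominator.

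The main obstacle, as always for Shor's algorithm, is the probability bookkeeping: combining the constant lower bounds for (i) the random $a$ yielding a useful $r$, (ii) the QFT measurement landing within $2^{-t-1}$ of some $s/r$, and (iii) $\gcd(s,r) = 1$, to conclude that $O(1)$ calls to the quantum circuit yield a non-trivial factor with probability $\Theta(1)$. A secondary engineering obstacle is packaging the fast multiplication of \cite{annalsmult} into a reversible quantum subroutine without inflating the qubit count beyond $O(n \log n)$.
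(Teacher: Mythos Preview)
The paper does not prove this theorem; it is stated as background and attributed to Shor (with the refinements from the cited works), so there is no ``paper's own proof'' to compare against. Your sketch is the standard argument and is essentially correct as an outline: the classical reduction to order finding via \cref{lem:shor}-type reasoning, modular exponentiation by repeated squaring with $O(n)$ multiplications each costing $O(n\log n)$ gates via \cite{annalsmult}, and continued-fraction recovery of $r$ from the QFT output. The only point worth flagging is your claim that $O(1)$ calls suffice: this is not immediate from Shor's original analysis (where the probability that $\gcd(s,r)=1$ is only $\gg 1/\log\log r$), and indeed the paper's footnote credits this specific improvement to \cite{shorbis}, so you would need to invoke that refinement rather than the naive bound in your step (iii).
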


Shor's original article \cite{shor} also includes a similar algorithm to solve the discrete logarithm problem. This advancement prompted the development of post-quantum cryptography, such as lattice-based cryptography, to ensure the security of our communications in the face of potential quantum computing breakthroughs~\cite{postquantum}.

Recently, Regev~\cite{regev} devised a multidimensional variant of Shor's factoring algorithm that reduces the size of the quantum circuit, i.e.~the number of quantum gates, to $O(n^{3/2}\log n)$. A distinctive feature of Regev's algorithm is that the quantum circuit must be called $O(\sqrt{n})$ times, rather than a constant number of times for Shor's algorithm. This is not considered to be a serious drawback as the various complexity parameters of the quantum circuit are far more relevant metrics in quantum computing.

However, this remarkable work of Regev initially came with two main limitations.

First, the original version of Regev's algorithm requires $O(n^{3/2})$ qubits -- many more than Shor's algorithm. The reason for this ultimately lies in the difficulty of performing classical computations on quantum computers in a space-efficient manner, due to the need for any quantum computation to be \emph{reversible}. In a subsequent paper, Ragavan and Vaikuntanathan~\cite{MIT} improved Regev's algorithm to use only $O(n\log n)$ qubits, while maintaining the circuit size at $O(n^{3/2}\log n)$ quantum gates. This work, inspired by ideas of Kalinski~\cite{kaliski}, essentially matches the space cost of Shor's algorithm.

The second issue, which we address in this paper, is that Regev's algorithm~\cite{regev} does not have theoretical guarantees, unlike Shor's algorithm. The correctness of Regev's algorithm is based on an \emph{ad hoc} number-theoretic conjecture which we describe below. This unproven assumption cannot be avoided as it lies at the core of Regev's improvement on the circuit size. The variant of Ragavan and Vaikuntanathan~\cite{MIT} also crucially relies on this conjecture.

In this paper, we prove a version of Regev's conjecture. This allows us to unconditionally prove the correctness of (slightly modified versions of) the algorithms of Regev~\cite{regev} and Ragavan and Vaikuntanathan~\cite{MIT}. More precisely, we obtain the following algorithmic result.

\begin{restatable}{theorem}{thmfactoring}
	\label{thm:factoring}
	There is a quantum circuit having $O(n^{3/2} \log^3 n)$ quantum gates and $O(n \log^3 n)$ qubits with the following property. There is a classical randomised polynomial-time algorithm that solves the factoring problem
	\begin{align*}
		 & \mathbf{Input: }\text{ a composite integer $N \leq 2^n$} \\
		 & \mathbf{Output: }\text{ a non-trivial divisor of $N$}
	\end{align*}
	using $O(\sqrt{n})$ calls to this quantum circuit, and succeeds with probability $\Theta(1)$.
\end{restatable}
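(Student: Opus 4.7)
The plan is to take the quantum factoring framework of Regev~\cite{regev} together with the space-efficient implementation of Ragavan and Vaikuntanathan~\cite{MIT} essentially as a black box, and combine it with an unconditional proof of a suitable variant of Regev's number-theoretic conjecture. The quantum circuit and the classical lattice-reduction post-processing are imported directly from~\cite{regev,MIT}, and the stated resource bounds --- $O(n^{3/2}\log^3 n)$ gates, $O(n\log^3 n)$ qubits, and $O(\sqrt{n})$ invocations --- follow from their analyses once the number-theoretic input is in place. The additional $\log^2 n$ factors, relative to the conditional statements in~\cite{regev,MIT}, absorb the cost of the slightly weaker quantitative bounds that one can hope to prove unconditionally (a polylogarithmic-size set of prime bases rather than an $O(1)$-size one, and products of length $O(\sqrt n)$ over a base alphabet of polylogarithmic cardinality).

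The substantive task is therefore to prove the following kind of number-theoretic statement: for every composite $N\le 2^n$ there is an explicit set of primes $p_1,\dots,p_k$ of size at most $(\log N)^{O(1)}$ such that a positive proportion of elements of $\mgp{N}$ can be represented in the form $p_1^{a_1}\cdots p_k^{a_k}$ with $\sum_i |a_i|=O(\sqrt n)$. I would reduce this to an equidistribution statement: letting $S=\{p_1^{a_1}\cdots p_k^{a_k}:\sum_i|a_i|\le O(\sqrt n)\}$, it suffices to show that the image of $S$ in $\mgp{N}$ is close to uniform in a suitable $L^1$ or $L^2$ sense. By orthogonality of Dirichlet characters modulo $N$, this in turn reduces to bounding the multiplicative character sum $\sum_{x\in S}\chi(x)$ for every non-principal $\chi\spmod{N}$. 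These sums factor over the prime bases $p_i$, and I would extract cancellation via a Perron-type contour integral, using zero-density estimates for the $L$-functions $L(s,\chi)$ in the strip $\rect{1-\delta}{T}$ to push the contour past the line $\Re s=1$.

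The hard part is that the relevant smoothness parameter --- primes up to $(\log N)^{O(1)}$ --- is far below what the classical literature on smooth numbers in arithmetic progressions can reach unconditionally, where Friedlander--Granville type results typically demand smoothness parameters at least a fixed power of $N$. To surmount this I would rely on a log-free zero-density estimate of Linnik--Heath-Brown type, and exploit the fact that the support $S$ has cardinality well in excess of $N$, so that the logarithmic losses incurred by using a density estimate instead of GRH can be absorbed into the exponent budget $O(\sqrt n)$. Once this equidistribution estimate is established, verifying that the output distribution of the Regev--Ragavan-Vaikuntanathan circuit spans a full-rank sublattice of the relevant period lattice with constant probability is a careful but routine adaptation of~\cite{regev,MIT}; the classical post-processing then produces a non-trivial divisor of $N$ with $\Theta(1)$ probability, as required.
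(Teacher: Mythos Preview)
Your proposal has a genuine gap at the heart of the number-theoretic component. You aim to show that short products of small primes equidistribute in $\mgp{N}$, which via orthogonality requires cancellation in $\sum_{x\in S}\chi(x)$ for \emph{every} non-principal character $\chi$ modulo $N$. But for characters of small order---most concretely, quadratic characters---no such cancellation can be proved unconditionally: if $N=p_1p_2$ with $p_1,p_2\equiv 3\pmod 4$ and both least non-residues $n(p_1),n(p_2)$ exceed your prime threshold, then $\chi(p_i)=1$ for the Legendre symbols $\chi=(\cdot/p_j)$ and the character sum has no cancellation whatsoever. Ruling this out would require improving Burgess's bound on the least quadratic non-residue, which is well beyond current techniques. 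The paper confronts this obstruction explicitly and sidesteps it by adjoining one additional element $x$ chosen uniformly from $\mgp{N}$ to the list of generators; then $\langle x\rangle$ contains a non-trivial square root of $1$ with probability $\geq 1/2$ by the elementary \cref{lem:shor}, regardless of what the small primes generate. Accordingly the paper never proves equidistribution in $\mgp{N}$; it proves instead that the lattice of multiplicative relations among $b_1,\ldots,b_d,x$ has a short basis (\cref{thm:shortbasis}), which is the correct input to the quantum routine and only requires control \emph{within} the subgroup those elements generate.

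A second issue concerns the prime threshold. You take the bases to have size $(\log N)^{O(1)}$, but the paper requires primes up to $X=d^{10^3 d}=\exp(O(\sqrt{n}\log n))$. This much larger threshold is essential for the zero-density approach: one needs $X$ large enough that character sums over primes $\leq X$ exhibit cancellation whenever $L(s,\chi)$ is zero-free near $\Re s = 1$ (\cref{lem:charbound}), and then the zero-density estimate (\cref{prop:ZDEmodN}) controls how many characters are exceptional. With polylogarithmic $X$ neither ingredient is strong enough. The extra $\log^2 n$ factors in the gate and qubit counts arise precisely from this enlarged prime bound---each $b_i$ now has $O(d\log d)$ bits instead of $O(\log d)$, which feeds through the modular-product subroutine (\cref{lem:classicalmult})---not from a change in the number of bases as you suggest.
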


This unconditionally establishes the results in \cite{MIT,regev} up to logarithmic factors.

Regev's algorithm was adapted by Eker{\aa} and G{\"a}rtner~\cite{discretelogs} to the discrete logarithm problem. Their paper~\cite{discretelogs} also uses the space-saving arithmetic of Ragavan and Vaikuntanathan~\cite{MIT} to obtain a quantum circuit with $O(n^{3/2}\log n)$ gates and $O(n\log n)$ qubits for computing discrete logarithms.

Once more, the correctness of the algorithm by Eker{\aa} and G{\"a}rtner~\cite{discretelogs} relies on an unproven hypothesis, which can be viewed as a stronger form of Regev's conjecture. Our methods also apply to this stronger statement (again, with minor technical adjustments). Thus, we get an analogue of \cref{thm:factoring} for the discrete logarithm problem, i.e.~an unconditional proof of the results in \cite{discretelogs} up to logarithmic factors.

\begin{restatable}{theorem}{thmlogarithm}
	\label{thm:logarithm}
	There is a quantum circuit having $O(n^{3/2} \log^3 n)$ quantum gates and $O(n \log^3 n)$ qubits with the following property. There is a classical randomised polynomial-time algorithm that solves the discrete logarithm problem
	\begin{align*}
		 & \mathbf{Input: }\text{ an integer $N \leq 2^n$ and elements $g,y \in \mgp{N}$ such that $y\in \inner{g}$} \\
		 & \mathbf{Output: }\text{ an integer $x$ such that $g^x \equiv y \pmod{N}$}
	\end{align*}
	using $O(\sqrt{n})$ calls to this quantum circuit, and succeeds with probability $\Theta(1)$.
\end{restatable}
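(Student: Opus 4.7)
The plan is to mirror the proof of \cref{thm:factoring}, working with the discrete-logarithm variant of Regev's algorithm due to Eker{\aa} and G{\"a}rtner \cite{discretelogs} and the space-saving arithmetic of Ragavan and Vaikuntanathan \cite{MIT}. There are two ingredients: a strengthened version of Regev's number-theoretic conjecture adapted to the discrete log setting, and the algorithmic template of \cite{discretelogs,MIT} (suitably tweaked to accommodate the parameter regime in which we can unconditionally establish the number-theoretic statement).

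For the number-theoretic step, one needs roughly the following statement. Fix the first $d \asymp \sqrt{n}$ primes $p_1, \ldots, p_d$, an exponent bound $B$, and let $H = \inner{g} \leq \mgp{N}$. The hypothesis underlying \cite{discretelogs} asks that a positive proportion of tuples $(\mathbf{e}, k) \in \Z^{d+1}$ with entries of size at most $B$ satisfy $p_1^{e_1} \cdots p_d^{e_d} y^{-k} \in \inner{g^m}$ for an appropriate divisor $m$ of the order of $g$, uniformly over $y \in H$. Equivalently, one wants small products of small primes together with the target $y$ to equidistribute in a suitable quotient of $H$. This is a genuinely stronger statement than the one needed for factoring, because $y$ is an arbitrary element of $H$ and the bound must be uniform in $y$.

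I would prove this via the same zero-density approach used for \cref{thm:factoring}: expand the indicator of membership in the coset via characters of $\mgp{N}$, isolate the main term coming from the principal character (or more generally from characters trivial on the relevant subgroup), and control the remainder using a large-sieve-type estimate on zeros of Dirichlet $L$-functions near $\Re s = 1$. The presence of $y$ enters only as an additional twist $\chi(y)^{-k}$ in the character sum, which is harmless because the analytic bounds are uniform in $\chi$; the cost is one extra dimension of summation, which can be absorbed by a slightly looser choice of $d$ and $B$.

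The main obstacle will be ensuring that uniformity in $y$ and in the subgroup structure $\inner{g^m} \leq \inner{g} \leq \mgp{N}$ is maintained throughout the analytic argument, and that the extra polylogarithmic slack in the parameters does not spoil the $O(\sqrt{n})$ gain over Shor. Once this is done, the algorithmic part is a mechanical adaptation of \cite{discretelogs,MIT}: the quantum circuit produces lattice samples encoding short relations of the form above, classical lattice reduction extracts $\log_g y$ from $O(\sqrt{n})$ such samples, and the $\Theta(1)$ success probability is guaranteed by the unconditional density bound established in the analytic step. The circuit complexity $O(n^{3/2} \log^3 n)$ gates and $O(n \log^3 n)$ qubits then follow from inserting the Ragavan-Vaikuntanathan modular exponentiation, with the logarithmic overhead coming from the polylog losses in the number-theoretic input.
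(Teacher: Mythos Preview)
Your overall shape---character expansion plus zero-density estimates feeding into the Eker{\aa}--G{\"a}rtner/Ragavan--Vaikuntanathan pipeline---matches the paper. But two concrete points separate your sketch from a working proof.

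First, you take $p_1,\ldots,p_d$ to be the first $d$ primes. The paper explicitly avoids this (see \cref{rem:deterministic}): with deterministic small primes one cannot currently prove the needed equidistribution for all $N$. What the paper does instead is choose $\b_1,\ldots,\b_d$ to be \emph{independent uniform random primes} below $X=d^{10^3 d}$. Randomness is used in an essential way: the error term is controlled in $L^2$ over the random $\b_i$'s (\cref{lem:weakboundallchar,lem:alllargeorderchar}), and the larger range $X$ is what lets \cref{lem:charbound} give enough cancellation. The enlarged prime size (each $\b_i$ has $O(d\log d)$ bits rather than $O(\log d)$) is precisely what produces the extra $\log^2 n$ in the gate and qubit counts, via the replacement of \cite[Lemma~5.6]{MIT} by \cref{lem:classicalmult}.

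Second, your sketch treats the zero-density input as a black box, but applying it directly to $\widehat{G}$ fails: if an exceptional character $\psi$ has many $h$th roots in $\widehat{G}$, the sum over $\chi$ in the error term blows up. The paper handles this by passing from the start to an auxiliary lattice $\L_{\theM}$ built from $\theM$th powers, where $\theM=\theM(N)$ is constructed in \cref{lem:defM} so that $\widehat{G}^{\theM}$ has at most $h^{d/10}$ solutions to $\chi^h=\psi$ (\cref{lem:hroots}). This is what makes \cref{lem:largeorderchar} go through; without it the count in \cref{eq:lo2} is not small enough.

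On the formulation: rather than proving equidistribution of $\prod \b_i^{e_i} y^{-k}$ in cosets of $\inner{g^m}$ uniformly in $y$, the paper proves directly that the relation lattice $\L_{\underline{\b},g,y}$ has a short basis (\cref{thm:shortbasis}), treating $g$ and $y$ as arbitrary random variables $\x_1,\x_2$ and absorbing them with the trivial bound $|F_{\chi,H}(\underline{\b},\underline{\x})|\leq (2H+1)^r|F_{\chi,H}(\underline{\b})|$. This sidesteps any need for uniformity in $y$ or in the subgroup structure of $\inner{g}$, and the discrete logarithm is then read off from the short basis by solving an integer linear system.
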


Moreover, the slight technical modifications that we need to introduce to make all these quantum algorithms unconditional are compatible with the error-correction results of \cite{discretelogs,MIT}. Finally, our results extend to further variants of Regev's algorithm, such as computing discrete logarithms or multiplicative orders modulo $N$ for several elements simultaneously (see~\cite{discretelogs}). 

\begin{remark}
	\label{rem:saveonemorelog}
	By being slightly more careful in the space usage of our quantum algorithms, it is possible to reduce the number of qubits to $O(n\log^2 n)$ for \cref{thm:factoring,thm:logarithm}. We will not show this here in order to keep the quantum computing prerequisites to a minimum.
\end{remark}

\subsection{An overview of Regev's algorithm}

Let us start by recalling the basic idea behind Shor's factoring algorithm~\cite{shor}.

Let $N$ be the integer to be factored, which can be assumed to be odd and with at least two distinct prime factors. Shor proved that there is an efficient quantum algorithm to find the multiplicative order of any invertible element modulo $N$. With this in hand, factoring $N$ is straightforward. The first step is to pick an integer $1 < a < N$ uniformly at random. If $\gcd(a,N)>1$, the factoring task is trivial. Otherwise, with probability at least~$1/2$, the order of $a$ modulo $N$, denoted $r$, is even and satisfies $a^{r/2}\not\equiv\pm1$ (i.e.~$a^{r/2}$ is a non-trivial square root of $1$ modulo $N$). Whenever this is the case, we get a non-trivial divisor of~$N$, namely $\gcd(N, a^{r/2}-1)$.

The reason that Shor's algorithm uses $O(n^2\log n)$ quantum gates, where $n := \lceil \log_2 N\rceil$, comes from the part of the quantum circuit that performs modular exponentiation. Consider the task of computing a power $a^M\spmod{N}$ on a classical computer, where $1<a<N-1$ and $M \leq N$. This computation can be performed efficiently using the well-known square-and-multiply method, which involves $O(\log M)$ multiplications of two integers modulo $N$. Since two $n$-bit integers can be multiplied in time $O(n\log n)$ by~\cite{annalsmult}, the complexity of this modular exponentiation problem is thus $O(n^2\log n)$.\footnote{Note that, even if $a$ is a small integer, say $a=2$, this procedure still takes time $O(n^2\log n)$, because most multiplications will involve $n$-bit integers.} For Shor's algorithm, a similar modular exponentiation needs to be performed quantumly, which requires $O(n^2\log n)$ quantum gates.

Regev's improvement of Shor's algorithm is made possible by combining two key ideas.

The first idea in Regev's algorithm is to work in \emph{higher-dimensional} space and replace the random parameter $a$ by several integers $b_1, \ldots, b_d$ (chosen in a specific way, as we explain below). Eventually, the optimal choice of dimension turns out to be ${d \asymp \sqrt{\log N}}$. Similarly to Shor's algorithm, the problem of factoring $N$ easily reduces to the task of finding a vector $(e_1, \ldots, e_d) \in \Z^d$ such that $\prod_{i=1}^d b_i^{e_i}$ is a non-trivial square root of $1$ modulo $N$.

Using additional tools such as the LLL lattice reduction algorithm, Regev~\cite{regev} generalised Shor's quantum algorithm to efficiently find all such vectors $(e_1, \ldots, e_d)$ in a ball of radius $N^{O(1/d)}$ centred at the origin.

Regev's algorithm only succeeds if there indeed exists a vector $v = (e_1, \ldots, e_d) \in \Z^d$ such that
\begin{enumerate}[label=\textcolor{black}{(}\roman*\textcolor{black}{)}]
	\item \label{item:cond1} $\norm{v}_2 \leq N^{O(1/d)}$, and
	\item \label{item:cond2} $\prod_{i=1}^d b_i^{e_i}$ is a non-trivial square root of $1$ modulo $N$.
\end{enumerate}

If the parameters $b_1, \ldots, b_d$ are ``sufficiently multiplicatively independent'' modulo $N$, one might heuristically expect the existence of a vector $v = (e_1, \ldots, e_d)$ satisfying \ref{item:cond1} and \ref{item:cond2}. This is, roughly speaking, what Regev needs to assume. To state his conjecture properly, it remains to specify how the parameters $b_1, \ldots, b_d$ are chosen. This is a crucial point, as the idea of working in dimension $d$ does not, on its own, offer any advantage over Shor's algorithm.

The second key insight of Regev is to choose $b_1, \ldots, b_d$ to be \emph{very small} compared to $N$. We mentioned that the most costly part of Shor's quantum circuit lies in the modular exponentiation step. Similarly, the number of gates of Regev's circuit is dominated by the cost of computing an expression of the form
\begin{equation}
	\label{eq:monomialexpr}
	\prod_{i=1}^d b_i^{M_i}\spmod{N}
\end{equation}
in the quantum setting, where the exponents $M_1, \ldots, M_d$ are $\leq N^{O(1/d)}$. Regev observed that~\cref{eq:monomialexpr} can be computed classically in time $O(n^{3/2} \log n)$ if $|b_i| \leq (\log N)^{O(1)}$ for all $i$ (for $d \asymp \sqrt{n} \asymp \sqrt{\log N}$). This can be achieved by cleverly ordering the intermediate multiplications so that most of them involve integers with much fewer than $n$ bits. This classical procedure can then be turned into a quantum version that uses $O(n^{3/2}\log n)$ gates.

Shor's algorithm corresponds to the $d=1$ case of Regev's algorithm where the parameter $b_1$ is an element of $\mgp{N}$ chosen uniformly at random. In this case, simple considerations from elementary number theory immediately imply the existence of an integer $e_1$ satisfying \ref{item:cond1}~and~\ref{item:cond2} with probability $\gg 1$. This would hold more generally for $d\geq 1$ if $b_1, \ldots, b_d$ were independent, uniformly distributed random elements of $\mgp{N}$. However, for Regev's algorithm, the $b_i$'s are far from uniformly distributed in $\mgp{N}$ as they are constrained to lie in a very small subset of $\mgp{N}$.

\subsection{The number-theoretic conjecture behind Regev's algorithm} Regev's algorithm \cite{regev} and its space-efficient variant \cite{MIT} crucially rely on the possibility of choosing very small integers $b_1, \ldots, b_d$ such that $\prod_{i=1}^d b_i^{e_i}$ is a non-trivial square root of $1$ modulo $N$ for some $e_1, \ldots, e_d$ with $|e_i| \leq N^{O(1/d)}$. We remind the reader that $n \asymp \log N$ and $d \asymp \sqrt{n}$.

The least restrictive bound\footnote{The bound stated in Regev's paper~\cite{regev} is $|b_i| \leq (\log N)^{O(1)}$, but this condition can be relaxed somewhat, as observed by Ragavan (private communication).} on the $b_i$'s that still allows for a quantum circuit with $\tilde{O}(n^{3/2})$ gates is to have $|b_i| \leq \exp(\tilde{O}(d))$, where the $\tilde{O}(\cdot)$ notation possibly hides a factor $(\log n)^{O(1)}$. Moreover, it is natural to set the $b_i$'s to be prime numbers (as in \cite{discretelogs,MIT,regev}) in order to avoid obvious multiplicative relations between them.

In this paper, we choose $b_1, \ldots, b_d$ to be independent random prime numbers less than $d^{10^3d}$. With these parameters, we can now state a version of Regev's conjecture\footnote{Compare with \cite[Theorem~1.1]{regev} or \cite[Conjecture~1]{MIT}.} that follows from our results.

\begin{corollary}
	\label{cor:efficientrep}
	Let $N>2$ be an integer. Let $d := \lceil\!\sqrt{\log N} \rceil$ and $X := d^{10^3d}$.

	Let $\b_1, \ldots, \b_d$ be i.i.d.~random variables, each uniformly distributed in the set of primes $\leq X$ not dividing $N$.

	Then, with high probability, every $x \in \inner{\b_1, \ldots, \b_d}$ can be expressed as
	\begin{equation*}
		x \equiv \prod_{i=1}^d \b_i^{e_i} \pmod{N}
	\end{equation*}
	for some integers $e_i$ with $|e_i|\leq e^{O(d)}$ for all $1\leq i\leq d$.
\end{corollary}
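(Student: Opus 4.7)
My approach is to set $B := e^{Cd}$ for a large absolute constant $C$, and to use Fourier analysis on the subgroup $H := \langle \b_1, \ldots, \b_d\rangle \leq \mgp{N}$ to count, for each $x \in H$, the number of exponent vectors $(e_i) \in [-B, B]^d$ with $\prod \b_i^{e_i} \equiv x \pmod{N}$, showing this count is positive with high probability over the $\b_i$'s. An equivalent reformulation is that the lattice of multiplicative relations $L := \{(e_i) \in \Z^d : \prod \b_i^{e_i} \equiv 1 \pmod{N}\}$ has $\ell_\infty$-covering radius at most $B$; since $H \cong \Z^d/L$ is finite, $L$ has full rank $d$ and determinant $|H| \leq N \leq e^{d^2}$ automatically.

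\textbf{Fourier expansion.} Letting $N_B(x)$ denote this count, orthogonality of characters of $H$ gives
\[
N_B(x) = \frac{1}{|H|}\sum_{\chi \in \widehat H} \bar\chi(x) \prod_{i=1}^d D_B(\chi(\b_i)),\qquad D_B(z) := \sum_{e=-B}^{B} z^e.
\]
The principal character contributes the main term $(2B+1)^d/|H| \geq e^{(C-1)d^2}$. Hence it suffices to show that with high probability over the random primes $\b_i$,
\[
\Sigma := \sum_{\chi \in \widehat H \setminus\{1\}} \prod_{i=1}^d |D_B(\chi(\b_i))| < (2B+1)^d,
\]
which would force $N_B(x) > 0$ uniformly in $x \in H$.

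\textbf{Moment estimate.} By independence of the $\b_i$'s, $\Ebis[\Sigma] = \sum_{\chi \neq 1} \prod_i \Ebis_\b |D_B(\chi(\b))|$. If one can establish a uniform bound $\Ebis_\b |D_B(\chi(\b))| \leq M$ for all non-trivial $\chi$ with $M = d^{O(1)}$, then $\Ebis[\Sigma] \leq |H|\, M^d \leq e^{d^2 + O(d \log d)}$, which is dominated by $(2B+1)^d \geq e^{Cd^2}$ once $C$ is large enough, and Markov's inequality finishes the argument.

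\textbf{Main obstacle.} The crucial analytic input is the uniform bound $\Ebis_\b |D_B(\chi(\b))| = O(\log B)$ over primes $\b \leq X$ coprime to $N$, for every non-principal Dirichlet character $\chi$ modulo $N$. Since $|D_B(z)|$ reaches $2B+1$ near $z = 1$ on the unit circle and decays like $|z-1|^{-1}$ away from it, this asserts that the values $\chi(\b)$ do not cluster near $1$ as $\b$ ranges over primes --- in other words, an effective equidistribution of primes in the residue classes cut out by $\chi$ modulo its conductor. Under GRH this is immediate from the explicit formula; \emph{unconditionally}, one is forced to invoke zero-density estimates for Dirichlet $L$-functions to rule out the scenario that many characters have $L(s,\chi)$ with real zeros very close to $\Re s = 1$. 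The most delicate point will be controlling possible Siegel zeros, either by exploiting the sparsity of exceptional characters or by averaging in the spirit of the Bombieri--Vinogradov theorem.
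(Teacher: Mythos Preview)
Your outline has the right high-level shape (character expansion plus zero-density input), but the moment step contains a genuine gap.

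First a structural point: $\Sigma$ is indexed by $\widehat H$, yet $H=\langle \b_1,\dots,\b_d\rangle$ is itself a function of the $\b_i$, so the factorisation $\Ebis[\Sigma]=\sum_{\chi\neq 1}\prod_i\Ebis_{\b}|D_B(\chi(\b))|$ is not legitimate as written. The natural repair is to expand over the fixed group $\widehat G$ with $G=(\Z/N\Z)^\times$; this is what the paper does (\cref{lem:countbychar}).

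The more serious issue is that the uniform bound $\Ebis_{\b}|D_B(\chi(\b))|=d^{O(1)}$ is \emph{false} for characters of small order, and this has nothing to do with Siegel zeros---it fails already under GRH. If $\chi$ has order $r$, then even under perfect equidistribution one has $\Pr[\chi(\b)=1]\approx 1/r$, and since $|D_B(1)|=2B+1$ this alone forces
\[
\Ebis_{\b}\,|D_B(\chi(\b))|\ \geq\ \frac{2B+1}{r}\,.
\]
For quadratic $\chi$ this is $\asymp B=e^{Cd}$, not $d^{O(1)}$. There are $2^{\omega(N)}-1$ non-principal quadratic characters modulo $N$, and $\omega(N)$ may be as large as $\asymp d^2/\log d$; their combined first-moment contribution is then of order $2^{\omega(N)}(B/2)^d$, which is not dominated by $(2B+1)^d$ once $\omega(N)>d$. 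The underlying reason is that characters which happen to be trivial on $H$ (a random event) belong to the main term, not the error; after passing to $\widehat G$ and averaging, your estimate can no longer distinguish them.

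The paper deals with this by splitting at order $e^{10d}$. Characters of small order are handled \emph{deterministically} for every realisation of the $\b_i$: their total contribution is shown to equal the expected main term $(2H+1)^d/|\langle \b_1,\dots,\b_d\rangle|$ up to a multiplicative error $1+O(e^{O(d)}/H)$ (\cref{lem:smallorderchar}). Only characters of large order are averaged; there a second moment reduces the problem to bounding $\sum_{|h|\leq 2H}|\Ebis_{\b}[\chi^h(\b)]|$, and zero-density guarantees that all but $e^{O(d)}$ of the powers $\chi^h$ enjoy good cancellation (\cref{lem:weakboundallchar,lem:strongerboundlargechar}). One further device, absent from your sketch, is required for this last step to close: the paper first restricts to a carefully chosen subgroup $\widehat{G}^{M_*}$ of the character group (\cref{sec:restrictionM}) in which no fixed exceptional character can have more than $h^{d/10}$ distinct $h$-th roots (\cref{lem:hroots}); otherwise a single bad $L$-function zero could pollute too many terms of the sum.
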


The analogue of Regev's algorithm for the discrete logarithm problem, proposed by Eker{\aa} and G{\"a}rtner~\cite{discretelogs}, relies on a stronger version of Regev's conjecture. It concerns the geometry of a certain lattice $\L$ which encodes all multiplicative dependencies between the $\b_i$'s (modulo $N$). We prove the following version of it.\footnote{Compare with \cite[Assumption~1]{discretelogs}, or Conjecture~E.1 in the full version of \cite{MIT}.}

\begin{corollary}
	\label{cor:shortbasis}
	Let $N>2$ be an integer. Let $d := \lceil\!\sqrt{\log N} \rceil$ and $X := d^{10^3d}$.

	Let $\b_1, \ldots, \b_d$ be i.i.d.~random variables, each uniformly distributed in the set of primes $\leq X$ not dividing $N$.

	Let $\L$ be the random lattice defined by
	\begin{equation}
		\label{eq:latticeintro}
		\L := \Big\{ (e_1, \ldots, e_d) \in \Z^d \ : \ \prod_{i=1}^d \b_i^{e_i} \equiv 1 \!\!\pmod{N} \Big\}.
	\end{equation}
	Then, with high probability, this lattice $\L$ has a basis consisting of vectors of Euclidean norm $\leq e^{O(d)}$.
\end{corollary}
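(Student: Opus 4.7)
The plan is to deduce \cref{cor:shortbasis} from \cref{cor:efficientrep} using only lattice geometry. The lattice $\L$ defined in \eqref{eq:latticeintro} has full rank $d$ in $\R^d$, since $G := \inner{\b_1,\ldots,\b_d}$ is a finite subgroup of $\mgp{N}$, and $\Z^d/\L \cong G$ as abelian groups. The first step is to translate \cref{cor:efficientrep} into an upper bound on the Euclidean covering radius $\mu(\L)$: given any $x \in \R^d$, round coordinate-wise to the nearest integer to obtain $v \in \Z^d$ with $\norm{x-v}_{\infty} \leq 1/2$, and observe that $\prod_i \b_i^{v_i}\in G$ admits, by \cref{cor:efficientrep}, a representation $\prod_i \b_i^{e_i}$ with $\max_i |e_i| \leq e^{O(d)}$, so that $w := v-e \in \L$ satisfies $\norm{x-w}_2 \leq \sqrt{d}\,(\tfrac{1}{2} + \norm{e}_{\infty}) \leq e^{O(d)}$. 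Hence $\mu(\L) \leq e^{O(d)}$.

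The second step converts this covering-radius bound into a short basis via two classical lattice-theoretic facts. First, a transference inequality: for any full-rank lattice in $\R^d$ one has $\lambda_d(\L) \leq C\cdot\mu(\L)$ for some absolute constant $C$. This is proved by observing that if $\lambda_d > C\mu$, then all lattice vectors of norm $<\lambda_d$ span a proper subspace $V \subsetneq \R^d$, and applying the covering property to a point $x$ orthogonal to $V$ at distance $\sim \mu$ from $V$ produces a lattice vector of norm $O(\mu)$ outside $V$, contradicting the definition of $\lambda_d$. Second, Mahler's theorem: every full-rank lattice of rank $d$ admits a basis $(v_1,\ldots,v_d)$ with $\norm{v_i}_2 \leq \max(1,i/2)\cdot \lambda_i(\L)$. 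Together these give a basis of $\L$ with $\norm{v_i}_2 \leq (d/2)\cdot C\cdot \mu(\L) = e^{O(d)}$, which is exactly what \cref{cor:shortbasis} asserts.

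The main substantive obstacle in the argument is \cref{cor:efficientrep} itself, which will have been established earlier in the paper using analytic number theory (zero-density estimates and the like). Once that is granted, the deduction above is pure lattice geometry, and the polynomial-in-$d$ losses introduced by the transference and Mahler steps are absorbed harmlessly by the exponential factor $e^{O(d)}$; one could equally well finish via LLL-type reduction, since $2^{O(d)}$ is also swallowed by $e^{O(d)}$. Thus no new number-theoretic input beyond \cref{cor:efficientrep} should be required.
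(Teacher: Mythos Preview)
Your deduction is circular as written. In this paper, \cref{cor:efficientrep} is \emph{not} proved independently: the introduction states explicitly that ``\Cref{cor:efficientrep} is a direct consequence [of] \cref{cor:shortbasis}'', and indeed the only route to \cref{cor:efficientrep} in the paper is via \cref{cor:shortbasis}. So you cannot invoke \cref{cor:efficientrep} to establish \cref{cor:shortbasis}. Your lattice-geometric steps (covering-radius bound $\Rightarrow \lambda_d \leq 2\mu \Rightarrow$ short basis via Mahler) are correct and show that the two corollaries are equivalent up to constants in the exponent, but that equivalence proves neither one.

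The paper's actual proof of \cref{cor:shortbasis} is a one-line specialisation: it is the case $r=0$ of \cref{thm:shortbasis}. That theorem is the main technical result, proved directly by analytic number theory: one expands a lattice-point count for an auxiliary lattice $\L_M$ via characters (\cref{lem:countbychar}), estimates the main term (\cref{lem:smallorderchar}), bounds the contribution of large-order characters using zero-density estimates (\cref{lem:alllargeorderchar}), and then converts the resulting point counts at two scales into a short basis via \cref{lem:shortLIvectors} and \cref{lem:Mahler}. There is no intermediate ``every element has a short representation'' step; the number theory feeds directly into the short-basis statement, and \cref{cor:efficientrep} is only recovered afterwards from \cref{cor:shortbasis}.
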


The main technical result of this paper is \cref{thm:shortbasis}, of which \cref{cor:shortbasis} is a special case. In~turn, \cref{cor:efficientrep} is a direct consequence of \cref{cor:shortbasis}.

\subsection{Subgroup obstructions}
Let $N$, $d$ and $\b_1, \ldots, \b_d$ be as in \cref{cor:efficientrep}. This corollary shows that, with high probability, every element $x$ in the subgroup generated by $\b_1, \ldots, \b_d$ can be written as a \emph{short} product of these $\b_i$ modulo $N$. To obtain the full strength of Regev's assumption \cite[Conjecture~1]{MIT}, it would be necessary to show that this subgroup $\inner{\b_1, \ldots, \b_d}$ contains a non-trivial square root of $1$ modulo $N$.

Unfortunately, it is not possible to prove this in full generality without considerable advances on the well-known \emph{least quadratic non-residue problem} in number theory. For any prime number $p$, write $n(p)$ for the smallest positive integer $a$ which is a quadratic non-residue (i.e.~not a square) modulo~$p$. The best known asymptotic upper bound for $n(p)$ is Burgess's classical result \cite{burgess} that
\begin{equation*}
	n(p) \ll_{\eps} p^{\frac{1}{4\sqrt{e}}+\eps}.
\end{equation*}
For simplicity, suppose that the integer $N$ to be factored is a product of two equally-sized primes ${p_1, p_2\equiv 3 \pmod{4}}$. Recall that, for Regev's algorithm, the parameters $\b_i$ are chosen to be less than $\exp(\tilde{O}(d))$. With current techniques, we cannot rule out the possibility that both $n(p_1)$ and $n(p_2)$ are larger than this threshold. If this is the case, all $\b_i$ will be quadratic residues modulo both $p_1$ and $p_2$, which implies that $\inner{\b_1, \ldots, \b_d}$ is contained in the subgroup of squares modulo $N$. In particular, since $p_1, p_2\equiv 3 \pmod{4}$, the subgroup $\inner{\b_1, \ldots, \b_d}$ does not contain any non-trivial square roots of $1$ modulo $N$.

This issue can be approached from several perspectives.

\begin{enumerate}
	\item Assuming the Generalised Riemann Hypothesis, Ankeny proved the much stronger bound $n(p) \ll (\log p)^2$ \cite{ankeny}. This suggests that quadratic residues may no longer be an obstruction in this case, and indeed, under GRH, it is straightforward to show that $\inner{\b_1, \ldots, \b_d}$ contains a non-trivial square root of $1$ modulo $N$ with high probability. Assuming GRH would also considerably simplify the proof of \cref{thm:shortbasis} and remove a logarithmic factor for the gate and qubit costs in \cref{thm:factoring,thm:logarithm}. However, in this paper we seek fully unconditional results.
	\item It is possible to prove the following partial result unconditionally: for \emph{almost all} odd $N$ with at least two distinct prime factors, the subgroup $\inner{\b_1, \ldots, \b_d}$ contains a non-trivial square root of $1$ modulo $N$ with high probability (with $d = d(N)$ and $\b_i = \b_i(N)$ as in \cref{cor:efficientrep}). By \cref{cor:efficientrep}, this implies that Regev's original algorithm (with these parameters $\b_i$) finds a non-trivial divisor of $N$ with high probability, for \emph{almost all} $N$. More precisely, the set $E\subset \N$ of exceptional values of $N$ for which the above does not hold can be shown to satisfy
	      \begin{equation*}
		      \abs{E \cap [1, x]} \leq \exp\!\big((\log x)^{1/2+o(1)}\big)
	      \end{equation*}
	      for $x\geq 1$. This partial result can be proved using character sums, the Landau-Page theorem \cite[Corollary~11.10]{MV} and a grand zero-density estimate \cite[Theorem~1~(1.8)]{jutila}, by means of a case analysis depending on the prime factorisation of $N$.
	\item In the present work, we follow a different approach to obtain a completely unconditional result that applies to \emph{all} $N$, by slightly modifying the algorithm itself. While the key to the efficiency of Regev's algorithm is that the $\b_i$'s are small, one can tolerate a bounded number of large $\b_i$'s.\footnote{In fact, this observation was already needed to adapt Regev's algorithm to the discrete logarithm problem~\cite{discretelogs}.} We use this extra flexibility to overcome the subgroup obstructions. The simplest way to proceed is to allow for one of the parameters, say $\b_1$, to be uniformly distributed in $\mgp{N}$. As with Shor's algorithm, this ensures that the subgroup $\inner{\b_1}$, and hence also $\inner{\b_1, \ldots, \b_d}$, contains a non-trivial square root of $1$ with probability $\gg 1$.
\end{enumerate}

\begin{remark}
	\label{rem:deterministic}
	In his paper~\cite{regev}, Regev proposed to select the parameters $b_1, \ldots, b_d$ deterministically, for example by setting $b_i$ to be the $i$-th smallest prime for $1\leq i\leq d$. While such a deterministic choice of parameters is likely to work for \emph{almost all} $N$, this seems to be very difficult to prove.
\end{remark}

\begin{remark}
	\label{rem:integersnotprimes}
	In this paper, the parameters $\b_i$ are chosen to be random primes (not dividing~$N$) under a certain threshold $X$. Another natural choice would have been to let $\b_1, \ldots, \b_d$ be i.i.d.~random variables uniformly distributed in the set of all integers $\leq X$ coprime to $N$. The proofs in this paper would carry over to this framework if an analogue of \cref{lem:charbound} for short character sums $\sum_{n\leq x} \chi(n)$ were available. Assuming the Generalised Riemann Hypothesis, the work of Granville and Soundararajan~\cite[Theorem~2]{GS} gives a bound of the required strength. However, this approach does not seem sufficient to obtain an unconditional result (the bounds in \cite{GS} become too weak when $L(s, \chi)$ has zeroes close to the line $\Re s = 1$).
\end{remark}

\subsection{Structure of the paper}
We begin in \cref{sec:toyversion} with a proof of a simple special case of our main theorem. This is intended to illustrate our overall strategy and motivate the need for the more technical approach required in the general case. The purpose of this section is to build intuition; it is not formally required for the main argument.

Our main technical result is \cref{thm:shortbasis}, which shows that lattices $\L$ similar to that in \cref{cor:shortbasis} have a basis of short vectors with high probability. Using simple geometry of numbers (see \cref{sec:geometrynumbers}), we reduce this problem to estimating the number of lattice points in balls of growing radii. Unfortunately, we are unable to obtain a suitable lattice point count for $\L$ directly. We resolve this by considering a different lattice $\L_M$ from the start of the argument (using the lemmas in \cref{sec:restrictionM}). In \cref{sec:charactercount}, we expand the lattice point count for $\L_M$ in terms of Dirichlet characters modulo $N$. This produces a main term, which can be estimated precisely, and an error term. The heart of the proof lies in using a zero-density estimate for Dirichlet characters modulo $N$ to bound this error term unconditionally. Finally, we prove our quantum algorithmic applications (\cref{thm:factoring,thm:logarithm}) in \cref{sec:proofsquantum}.

\subsection{Notation}
We write $f\ll g$ or $f = O(g)$ if $|f| \leq Cg$ for some absolute constant $C>0$. If instead, $C$ depends on a parameter $\theta$, we write $f \ll_{\theta} g$ or $f = O_{\theta}(g)$. The notation $f\asymp g$ or $f = \Theta(g)$ means that $f\ll g$ and $g\ll f$.

A \emph{character} of a finite abelian group $G$ is a homomorphism $\chi : G\to \C^{\times}$. The \emph{order} of $\chi$, denoted by $\order(\chi)$, is the least positive integer $n$ such that $\chi^n$ is the trivial character $1$. The group of all characters of $G$ is denoted by~$\widehat{G}$. If $g_1, \dots, g_k\in G$, we write $\inner{g_1, \ldots, g_k}$ for the subgroup of $G$ generated by these elements.

A non-trivial square root of $1$ modulo $N$ is an element $a\in \mgp{N}$ such that $a^2 \equiv 1\pmod N$ and $a\not\equiv \pm1 \pmod N$. Euler's totient function and the prime counting function are denoted by $\varphi$ and $\pi$, respectively. We use the notation $\norm{\cdot }_2$ for the Euclidean norm. We write $\log$ for the natural logarithm and $\log_2$ for the logarithm in base $2$.

Random variables are typically written in bold font, such as $\b_1, \ldots, \b_d$. A statement will be said to occur \emph{with high probability} if it holds with probability tending to $1$ as $N\to \infty$.

We refer the reader to \cite{CNbook} for a detailed textbook on quantum computing, and \cite[Chapter~2]{postquantum} for a brief overview.

\section*{Acknowledgements}

The author is supported by the Oxford Mathematical Institute and a Saven European Scholarship. I would like to thank Jane Street for awarding me one of their Graduate Research Fellowships. I am indebted to my advisors, Ben Green and James Maynard, for their invaluable advice and support. Finally, I wish to thank Seyoon Ragavan for his kind explanations, and in particular for showing me that the upper bound for the parameters in Regev's algorithm could be relaxed considerably.

\section{An illustrative special case}
\label{sec:toyversion}

In this section, we prove a weak version of our main theorem in a simplified setting. This special case can be used to obtain an unconditional factoring algorithm for integers $N$ of a special form: RSA moduli that are products of two ``safe primes''.\footnote{Note that this restrictive assumption is generally not satisfied by RSA moduli used in practice.}

\begin{definition}[RSA moduli with safe primes]
	\label{def:RSAmod}
	Let $\M$ be the set of integers $N$ of the form $N = P\cdot Q$, where $P,Q\geq N^{1/4}$ are distinct primes such that $(P-1)/2$ and $(Q-1)/2$ are also prime.
\end{definition}

For $N\in \M$, the multiplicative group $\mgp{N}$ has a very convenient structure, allowing us to give a relatively short proof of the following proposition.

\begin{proposition}
	\label{prop:toyversion}
	Let $N\in \M$ be a sufficiently large integer. Let $d := \lceil\!\sqrt{\log N} \rceil$ and $X := d^{d}$.

	Let $\b_1, \ldots, \b_d$ be i.i.d.~random variables, each uniformly distributed in the set of primes less than~$X$. Let $\b_0$ be a random variable uniformly distributed in $\mgp{N}$, independent from the $\b_i$'s.

	Let $x\in \mgp{N}$. Then, with probability $\gg 1$, there are integers $0\leq e_0, \ldots, e_d\leq e^{O(d)}$ such that
	\begin{equation*}
		\prod_{i=0}^d \b_i^{e_i} \equiv x \pmod{N}.
	\end{equation*}
\end{proposition}

Applying \cref{prop:toyversion} (with $x$ being a non-trivial square root of $1$ modulo $N$) shows that Regev's factoring algorithm, with random parameters $\b_0, \ldots, \b_d$ as in that proposition\footnote{Again, the presence of one potentially large parameter $\b_0$ circumvents the subgroup obstructions discussed in the introduction, while not significantly affecting the efficiency of the quantum algorithm.}, admits an unconditional proof of correctness for integers $N\in \M$.

It should be noted that \cref{thm:shortbasis} not only removes the restriction $N\in \M$, it also provides a stronger geometric conclusion required for applications to the discrete logarithm problem.

\begin{notation}
	\label{not:toy}
	For the rest of this section, we fix $N$, $d$, $X$, $\b_0, \ldots, \b_d$ and $x$ as in the statement of \cref{prop:toyversion}. In particular, $N\in \M$ is assumed to be sufficiently large. Moreover, there are primes $P$, $Q$, $P'$ and $Q'$ such that $N = P\cdot Q$, $P = 2P'+1$ and $Q = 2Q'+1$. Hence,
	\begin{equation}
		\label{eq:structureGtoy}
		G := \mgp{N} \cong (\Z/2\Z)^2 \times \Z/P'\Z \times \Z/Q'\Z \cong \widehat{G}.
	\end{equation}
\end{notation}

To prove \cref{prop:toyversion}, we require the following lemma from analytic number theory, that ultimately relies on a zero-density estimate for Dirichlet $L$-functions.
\begin{lemma}
	\label{lem:ANTtoy}
	For every $j\geq 0$, the number of characters $\psi \in \widehat{G}$ such that
	\begin{equation*}
		\Big\lvert \! \Ebis_{p\leq X} \psi(p) \Big\rvert \geq e^{-j}
	\end{equation*}
	is at most $e^{O(d)}N^{O(j/\log X)}$. Here, $\Ebis_{p\leq X}$ denotes the average over primes $p\leq X$.
\end{lemma}
We omit the proof of \cref{lem:ANTtoy}, as it is somewhat tangential to the aim of this explanatory section.\footnote{This lemma is a consequence of \cref{lem:charbound,prop:ZDEmodN}; see \cref{sec:largeorder} and \cref{sec:appendix} for further details.}

\begin{proof}[Proof of \cref{prop:toyversion}]
	Let $H = \lfloor e^{Cd} \rfloor$, where $C$ is a large absolute constant to be chosen later.

	We will show that, with probability $\gg 1$, there exist integers $0\leq h_1, \ldots, h_d < H$ such that
	\begin{equation*}
		x \equiv \b_0 \b_1^{h_1}\cdots \b_d^{h_d} \pmod{N},
	\end{equation*}
	which clearly implies the desired conclusion. By orthogonality of characters modulo $N$, we have
	\begin{equation}
		\label{eq:orthogonalitytoy}
		\sum_{0\leq h_1, \ldots, h_d < H} \ind{\b_0 \b_1^{h_1}\cdots \b_d^{h_d} \equiv x \spmod{N}} = \frac{1}{\varphi(N)} \sum_{\chi \in \widehat{G}} \chi(\b_0x^{-1}) \prod_{i=1}^d \sum_{0\leq h < H} \chi^{h}(\b_i).
	\end{equation}

	Let $\chi_0$ be the trivial character, and $\chi_1, \chi_2, \chi_3$ be the three characters of order $2$ in $\widehat{G}$. Each of these four characters $\chi_j$ satisfies $\chi_j(\b_i)\in \{\pm 1\}$ for all $i$, so the term $\sum_{0\leq h < H} \chi_j^{h}(\b_i)$ is either equal to $H$ or lies in $\{0,1\}$. Hence, the contribution of these four characters to \cref{eq:orthogonalitytoy} is
	\begin{equation}
		\label{eq:maintermtoy}
		\frac{H^d}{\varphi(N)} + \frac{1}{\varphi(N)} \sum_{j=1}^3 \chi_j(\b_0x^{-1})\, C_j(\b_1,\ldots, \b_d).
	\end{equation}
	for some quantities $C_j(\b_1,\ldots, \b_d)\geq 0$. Since $\b_0$ is uniformly distributed in $G$, by \cref{eq:structureGtoy} we have
	\begin{equation*}
		\Pr{\chi_1(\b_0x^{-1}) = \chi_2(\b_0x^{-1}) = \chi_3(\b_0x^{-1}) = 1} = \frac{1}{4},
	\end{equation*}
	as this is the probability that $\b_0x^{-1}$ lies in the subgroup of squares modulo $N$, which has index~$4$ in~$G$. Therefore, with probability $\geq 1/4$, the expression \cref{eq:maintermtoy} is bounded below by $H^d/\varphi(N)$.

	As a result, \cref{prop:toyversion} reduces to showing that the contribution of the remaining characters $\chi\in \widehat{G}\setminus \{\chi_0,\chi_1,\chi_2,\chi_3\}$ to \cref{eq:orthogonalitytoy} is, with high probability, at most $H^d/(2\varphi(N))$ in absolute value. By Chebyshev's inequality, it suffices to prove that
	\begin{equation}
		\label{eq:secondmomenttoy}
		\Ebis\!\big[\abs{\z}^2\big] = o\big(H^{2d}\big),
	\end{equation}
	where $\z$ is the random variable defined by
	\begin{equation*}
		\z := \sum_{\substack{\chi\in \widehat{G} \\ \order(\chi) > 2}} \Bigg\lvert \prod_{i=1}^d \sum_{0\leq h < H} \chi^{h}(\b_i)\Bigg\rvert.
	\end{equation*}
	By the triangle inequality for the $L^2$-norm and the definition of the random variables $\b_i$, we have
	\begin{equation*}
		\Ebis\!\big[\abs{\z}^2\big]^{1/2} \leq \sum_{\substack{\chi\in \widehat{G} \\ \order(\chi) > 2}} \Bigg( \Ebis\Bigg[ \bigg\lvert \prod_{i=1}^d \sum_{0\leq h < H} \chi^{h}(\b_i)\bigg\rvert^2\Bigg]\Bigg)^{1/2} = \sum_{\substack{\chi\in \widehat{G} \\ \order(\chi) > 2}} \Bigg(  \Ebis_{p\leq X} \bigg\lvert \! \sum_{0\leq h < H} \chi^h(p) \bigg\rvert^2 \Bigg)^{d/2}.
	\end{equation*}
	Expanding the square and exchanging the order of summation, we obtain
	\begin{equation*}
		\Ebis_{p\leq X} \bigg\lvert\!\sum_{0\leq h < H} \chi^h(p) \bigg\rvert^2 \leq \sum_{0\leq h_1,h_2 < H}\, \abs{\Ebis_{p\leq X} \chi^{h_1 - h_2}(p)} \leq H \sum_{|h| < H} \, \abs{\Ebis_{p\leq X} \chi^{h}(p)}.
	\end{equation*}
	Therefore,
	\begin{equation}
		\label{eq:intermediatetoy}
		\Ebis\!\big[\abs{\z}^2\big]^{1/2} \leq H^{d/2} \sum_{\substack{\chi\in \widehat{G} \\ \order(\chi) > 2}} \Bigg( \sum_{|h| < H} \abs{\Ebis_{p\leq X} \chi^{h}(p)} \Bigg)^{d/2}.
	\end{equation}

	To estimate the right-hand side of \cref{eq:intermediatetoy}, we partition the set of characters as follows. For $j\geq 0$, let
	\begin{equation*}
		E_j := \Big\{\chi \in \widehat{G}\setminus \{\chi_0,\chi_1,\chi_2,\chi_3\} \ :\  \max_{0< |h| < H} \Big\lvert\! \Ebis_{p\leq X} \chi^{h}(p) \Big\rvert \in \big(e^{-j-1}, e^{-j}\big] \Big\}.
	\end{equation*}
	Notice that, by \cref{eq:structureGtoy} and \cref{def:RSAmod}, every character $\chi$ of order greater than $2$ has order at least $\min(P', Q') \gg N^{1/4}$. Hence, provided $N$ is sufficiently large, for any $\chi$ of order greater than~$2$, the characters $\chi^h$ for $0<|h|<H$ are all distinct. In~addition, for any fixed $\psi \in \widehat{G}\setminus \{\chi_0,\chi_1,\chi_2,\chi_3\}$ and $0<|h|<H$, there are at most four characters $\chi$ such that $\chi^h = \psi$. By \cref{lem:ANTtoy}, these observations imply that
	\begin{equation}
		\label{eq:sizeEjtoy}
		\abs{E_j} \ll H e^{O(d)} N^{O(j/\log X)} \ll H e^{O(d)} e^{O(jd/\log d)}
	\end{equation}
	for all $j\geq 0$.\footnote{It is here that we crucially relied on the particular structure of $\mgp{N}$ for $N\in \M$. These arguments can be extended somewhat, but additional ideas are required to handle the general case (the most difficult case being when $N$ has many prime factors).}

	Let $j\geq 0$ and let $\chi \in E_j$. By \cref{lem:ANTtoy}, there are at most $e^{O(d)}$ integers $0\leq |h|<H$ such that $\abs{\Ebis_{p\leq X} \chi^{h}(p)} > d^{-2}$. By definition of $E_j$, all other integers $0<|h|<H$ satisfy
	\begin{equation*}
		\abs{\Ebis_{p\leq X} \chi^{h}(p)} \leq \min(d^{-2}, e^{-j}).
	\end{equation*}
	Hence,
	\begin{equation*}
		\Bigg( \sum_{|h| < H} \abs{\Ebis_{p\leq X} \chi^{h}(p)} \Bigg)^{d/2} \leq \big(e^{O(d)} + 2H \min(d^{-2}, e^{-j})\big)^{d/2} \leq e^{O(d^2)} + e^{O(d)}H^{d/2} \min(d^{-d}, e^{-jd/2}).
	\end{equation*}
	Summing this over all $\chi \in E_j$ and $j\geq 0$, we can bound \cref{eq:intermediatetoy} by
	\begin{equation*}
		\Ebis\!\big[\abs{\z}^2\big]^{1/2} \leq e^{O(d^2)}\phi(N)H^{d/2} + e^{O(d)}H^d \sum_{j\geq 0} \abs{E_j} \min(d^{-d}, e^{-jd/2}).
	\end{equation*}
	Note that $\varphi(N)\leq N\leq e^{d^2}$. Bounding $\abs{E_j}$ using \cref{eq:sizeEjtoy} and summing the resulting geometric series (using that $N$ is sufficiently large), we obtain
	\begin{equation*}
		\Ebis\!\big[\abs{\z}^2\big]^{1/2} \leq \big(e^{O(d^2)}H^{-d/2} + e^{O(d)}H d^{-d}\big) H^d.
	\end{equation*}
	Recalling that $H = \lfloor e^{Cd} \rfloor$ we see that, if $C$ is a sufficiently large absolute constant, the right-hand side is $o(H^d)$ as $N\to \infty$. This establishes \cref{eq:secondmomenttoy} and concludes the proof of \cref{prop:toyversion}.
\end{proof}

\begin{remark}
	\label{rem:aftertoyproof}
	For \cref{thm:shortbasis}, we obtain (with high probability) an asymptotic formula for quantities similar to \cref{eq:orthogonalitytoy}, which is then combined with techniques from geometry of numbers to prove the existence of a short lattice basis.

	This requires a more careful analysis of the small-order characters, which may be more numerous than in the special case treated above. Moreover, the analysis of large-order characters is much more delicate, as an exceptional character $\psi$ (in the sense of \cref{lem:ANTtoy}) may have more representations of the form $\psi = \chi^h$ (for some $\chi\in \widehat{G}$ and $|h|<H$) than the previous argument can handle. This issue is resolved by performing Fourier analysis on a well-chosen subgroup of $\mgp{N}$, introduced in \cref{sec:restrictionM}.
\end{remark}

\section{Proof of the existence of a short lattice basis}
\label{section:proof}

\subsection{Restricting to a convenient subgroup of \texorpdfstring{$\widehat{\mgp{N}}$}{the multiplicative group}}
\label{sec:restrictionM}

The goal of this preliminary section is to define a subgroup of the character group of $\mgp{N}$ that does not have too many elements of small order. This will be crucial for \cref{sec:largeorder}, to reduce the influence of potential counterexamples to the Generalised Riemann Hypothesis. For example, we need to avoid having many characters $\chi_1, \ldots, \chi_k$ modulo $N$ such that $\chi_i^2 = \psi$ for all $i$, where $\psi$ is an exceptional character (in the sense that $L(s, \psi)$ has a zero very close to the line $\Re s = 1$). The definition of the suitable subgroup depends on the precise structure of $\mgp{N}$.

\begin{notation}
	\label{not:group}
	If $G$ is a finite abelian group (written multiplicatively) and $M\geq 1$, we write $G^M := \{x^M : x\in G\}$ for the subgroup of $M$th powers in $G$.\footnote{In particular, $G^M$ is \emph{not} the $M$-fold direct product of $G$.}
\end{notation}

\begin{lemma}
	\label{lem:pontryagin}
	Let $G$ be a finite abelian group (written multiplicatively) and $M\geq 1$. There is an isomorphism
	\begin{equation*}
		\iota : \widehat{G^M} \xrightarrow{\sim} \widehat{G}^M
	\end{equation*}
	such that, for any $\chi\in \widehat{G^M}$, $\iota(\chi) = \widetilde{\chi}^M$ where $\widetilde{\chi}\in \widehat{G}$ is an arbitrary extension of $\chi$ to $G$.
\end{lemma}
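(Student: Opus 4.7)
The plan is to construct $\iota$ in the natural way and then verify four things: well-definedness (independence of the choice of extension), the homomorphism property, injectivity, and surjectivity. The only genuine input from finite abelian group theory is that every character of a subgroup $H\le G$ extends to a character of $G$; equivalently, the restriction map $\widehat{G}\to\widehat{H}$ is surjective. Applied to $H=G^M$, this ensures that for every $\chi\in\widehat{G^M}$ there exists at least one $\widetilde{\chi}\in\widehat{G}$ with $\widetilde{\chi}|_{G^M}=\chi$, so the proposed map makes sense at all.

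For well-definedness, I would argue as follows. If $\widetilde{\chi}_1$ and $\widetilde{\chi}_2$ are two extensions of $\chi$ to $G$, then $\psi:=\widetilde{\chi}_1\widetilde{\chi}_2^{-1}$ lies in $\widehat{G}$ and is trivial on $G^M$. Hence $\psi$ factors through the quotient $G/G^M$, every element of which has order dividing $M$. Therefore $\psi^M=1$ in $\widehat{G}$, which gives $\widetilde{\chi}_1^{M}=\widetilde{\chi}_2^{M}$. So $\iota(\chi):=\widetilde{\chi}^M$ is independent of the extension chosen, and by construction it lies in $\widehat{G}^M$. The fact that $\iota$ is a group homomorphism is immediate from the definition, since we can extend a product to the product of extensions.

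For injectivity, suppose $\iota(\chi)=1$, i.e.\ $\widetilde{\chi}^M=1$ in $\widehat{G}$. Then $\widetilde{\chi}(g^M)=\widetilde{\chi}(g)^M=1$ for every $g\in G$, which says $\chi$ is the trivial character on $G^M$. For surjectivity, take any $\psi\in\widehat{G}^M$ and write $\psi=\tau^M$ with $\tau\in\widehat{G}$; then the restriction $\chi:=\tau|_{G^M}\in\widehat{G^M}$ admits $\tau$ as an extension, so $\iota(\chi)=\tau^M=\psi$. Putting the four bullet points together gives the isomorphism with the stated explicit formula.

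Because every step is routine, there is no real obstacle — the only subtlety worth flagging is the well-definedness argument, which is where the crucial identity ``characters of $G/G^M$ have order dividing $M$'' is used; this is the same fact that underlies the duality isomorphism $\widehat{G^M}\cong \widehat{G}/\widehat{G}[M]\cong\widehat{G}^M$ and so one could alternatively prove the lemma by a short diagram chase combined with a cardinality comparison, but the direct verification above seems cleanest for subsequent use of the explicit formula for $\iota(\chi)$.
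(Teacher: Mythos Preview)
Your proof is correct. The paper's argument reaches the same conclusion but is organized differently in two places. First, rather than defining $\iota(\chi)$ via a chosen extension and then checking independence of that choice, the paper defines $\iota$ intrinsically by $\iota(\chi)(g):=\chi(g^M)$; this formula involves no choice, so well-definedness is automatic, and the identity $\iota(\chi)=\widetilde{\chi}^M$ then drops out as a consequence once one knows extensions exist. Second, instead of your direct construction of a preimage for surjectivity, the paper observes that $\iota$ is injective with image contained in $\widehat{G}^M$ and then finishes with the cardinality count $\lvert\widehat{G^M}\rvert=\lvert G^M\rvert=\lvert\widehat{G}^M\rvert$. Your route is slightly more elementary and self-contained (it never invokes exactness of duality or a counting argument), while the paper's route sidesteps the well-definedness computation entirely; either is perfectly adequate here.
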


\begin{proof}
	Pontryagin duality for finite abelian groups is an equivalence of categories, and therefore an exact functor. In simple terms, this implies that injections turn into surjections when switching to the dual, and vice versa. Hence, the $M$th power map $G \twoheadrightarrow G^M$, $g\mapsto g^M$, induces an injection $\iota : \widehat{G^M} \xhookrightarrow{} \widehat{G}$ defined by
	\begin{equation*}
		\iota(\chi)(g) := \chi\left(g^M\right)
	\end{equation*}
	for every $\chi \in \widehat{G^M}$ and $g\in G$. Moreover, the restriction map $\widehat{G} \to \widehat{G^M}$ is surjective, using exactness of Pontryagin duality again. Thus, every $\chi \in \widehat{G^M}$ can be extended to a character $\widetilde{\chi}$ on $G$, and
	\begin{equation*}
		\chi\left(g^M\right) = \widetilde{\chi}(g^M) =  \widetilde{\chi}^M(g).
	\end{equation*}
	This implies that the image of $\iota$ is contained in $\widehat{G}^M$. Since $\big\lvert\widehat{G^M}\big\rvert = \big\lvert{G^M}\big\rvert = \big\lvert\widehat{G}^M\big\rvert$, the lemma follows.
\end{proof}

\begin{definition}
	Let $N \geq 1$. For every $h\geq 1$ we define
	\begin{equation*}
		K(h) := \frac{\abs{\mgp{N}}}{\abs{(\mgp{N})^{h}}}.
	\end{equation*}
	In other words, $K(h)$ is the size of the kernel of the $h$th power map in $\mgp{N}$.
\end{definition}

\begin{lemma}
	\label{lem:defM}
	Let $N>2$ be an integer and $d := \lceil\! \sqrt{\log N} \rceil$.

	For every prime $p$, define $m_p$ to be the largest non-negative integer such that
	\begin{equation*}
		K(p^{m_p}) \geq p^{d m_p/10}.
	\end{equation*}
	Let $\theM = \theM(N) := \prod_{p} p^{m_p}$. Then the following holds.
	\begin{enumerate}
		\item We have $\theM \leq e^{10d}$.
		\item For every $h\geq 1$, we have $K(\theM h)/K(\theM)\leq h^{d/10}$.
	\end{enumerate}
\end{lemma}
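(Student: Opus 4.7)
The plan is to exploit the structure of $\mgp{N}$ as a finite abelian group. Writing $\mgp{N} \cong \prod_i \Z/n_i\Z$ in invariant factor form, the kernel of the $h$th power map satisfies $K(h) = \prod_i \gcd(h, n_i)$. The key consequence is that $K$ is multiplicative on coprime arguments, so $K(M_* h) = \prod_p K(p^{m_p+k_p})$ when $h = \prod_p p^{k_p}$. In particular, $K(M_*) = \prod_p K(p^{m_p})$, which reduces both claims to statements about each prime separately.

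For part (1), I would directly combine the defining inequality $K(p^{m_p}) \geq p^{m_p d/10}$ with the multiplicativity above to get
\begin{equation*}
	\theM^{d/10} = \prod_p p^{m_p d/10} \leq \prod_p K(p^{m_p}) = K(\theM) \leq \abs{\mgp{N}} \leq N.
\end{equation*}
Hence $\theM \leq N^{10/d}$, and since $d \geq \sqrt{\log N}$ one has $(10/d)\log N \leq 10 d$, yielding $\theM \leq e^{10d}$.

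For part (2), the main point (and the only nontrivial step) is to prove the single-prime bound $K(p^{m_p+k})/K(p^{m_p}) \leq p^{kd/10}$ for every $k \geq 0$. The obstacle to worry about is that $K(p^{m_p})$ may be much larger than the threshold $p^{m_p d/10}$, so one cannot just combine the two defining inequalities naively. To get around this, I would analyse the discrete derivative
\begin{equation*}
	a_j := v_p\bigl(K(p^{j+1})\bigr) - v_p\bigl(K(p^j)\bigr) = \#\{i : v_p(n_i) \geq j+1\},
\end{equation*}
which is a non-negative, non-increasing, integer-valued sequence. Since $(a_j)$ is non-increasing, the maximality of $m_p$ gives
\begin{equation*}
	(m_p+1)\, a_{m_p} \leq \sum_{j=0}^{m_p} a_j = v_p\bigl(K(p^{m_p+1})\bigr) < (m_p+1)\tfrac{d}{10},
\end{equation*}
so $a_{m_p} \leq d/10$. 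Monotonicity then gives $\sum_{j=m_p}^{m_p+k-1} a_j \leq k\, a_{m_p} \leq kd/10$, which is exactly the single-prime bound.

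Finally, I would multiply this bound over all primes: writing $h = \prod_p p^{k_p}$ and using multiplicativity,
\begin{equation*}
	\frac{K(\theM h)}{K(\theM)} = \prod_p \frac{K(p^{m_p+k_p})}{K(p^{m_p})} \leq \prod_p p^{k_p d/10} = h^{d/10}.
\end{equation*}
The hard part is the monotonicity argument above; everything else is essentially bookkeeping via the primary decomposition of $\mgp{N}$. I would also briefly note that $m_p$ is finite (since $K(p^m) \leq N$ forces $m \leq (10 \log N)/(d \log p)$) and that $m_p = 0$ for all primes $p \nmid \varphi(N)$, so $\theM$ is a finite product.
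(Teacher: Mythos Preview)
Your proof is correct. Part~(1) matches the paper's argument. For part~(2), however, you take a longer route than necessary: your claim that ``one cannot just combine the two defining inequalities naively'' is mistaken, and the paper does exactly this. By maximality of $m_p$, whenever $e_p \geq 1$ one has $K(p^{m_p+e_p}) < p^{d(m_p+e_p)/10}$, while by definition $K(p^{m_p}) \geq p^{dm_p/10}$; dividing an upper bound on the numerator by a lower bound on the denominator gives $K(p^{m_p+e_p})/K(p^{m_p}) < p^{de_p/10}$ in one line. Your worry that $K(p^{m_p})$ might far exceed the threshold $p^{m_p d/10}$ is irrelevant here, since that inequality is already pointing in the favourable direction for bounding the ratio. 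Your monotonicity argument via the discrete derivative $a_j = \#\{i : v_p(n_i) \geq j+1\}$ is correct and recovers the same bound, and it does give a more structural picture of why the inequality holds; but the paper's version shows that nothing beyond the definition of $m_p$ is needed.
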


\begin{proof}
	\begin{enumerate}
		\item For every prime $p$, since $K(p^{m_p})$ is a power of $p$ dividing $\abs{\mgp{N}} = \phi(N)$, we have
		      \begin{equation*}
			      \prod_{p} K(p^{m_p}) \mid \phi(N).
		      \end{equation*}
		      By definition of $m_p$, we know that $K(p^{m_p}) \geq p^{d m_p/10}$ for every $p$, and thus
		      \begin{equation*}
			      \theM^{d/10} = \prod_p p^{dm_p/10} \leq \prod_{p} K(p^{m_p}) \leq \phi(N) \leq N.
		      \end{equation*}
		      Recalling that $d = \lceil\! \sqrt{\log N} \rceil$, it follows that $\theM \leq e^{10d}$.
		\item In any finite abelian group (written multiplicatively) and for any coprime integers $a$ and $b$, there is an isomorphism
		      \begin{equation*}
			      \ker\!\big(g\mapsto g^{a}\big) \times \ker\!\big(g\mapsto g^{b}\big) \cong \ker\!\big(g\mapsto g^{ab}\big).
		      \end{equation*}
		      Thus, if $h = \prod_{p} p^{e_p}$ is the prime factorisation of $h$, we see that $K(\theM) = \prod_p K(p^{m_p})$ and $K(\theM h) = \prod_{p} K(p^{m_p+e_p})$, where all products are finite. Whenever $e_p\geq 1$, we have
		      \begin{equation*}
			      K(p^{m_p+e_p}) < p^{d(m_p+e_p)/10},\quad K(p^{m_p}) \geq p^{dm_p/10}
		      \end{equation*}
		      by definition of $m_p$. Thus ${K(p^{m_p+e_p})}/{K(p^{m_p})} \leq p^{de_p/10}$, and this last inequality also holds when $e_p = 0$. We conclude that
		      \begin{equation*}
			      \frac{K(\theM h)}{K(\theM)} = \prod_p \frac{K(p^{m_p+e_p})}{K(p^{m_p})} \leq \prod_p p^{de_p/10} = h^{d/10}.
		      \end{equation*}
		      as claimed. \qedhere
	\end{enumerate}
\end{proof}

\begin{lemma}
	\label{lem:hroots}
	Let $N > 2$, $d := \lceil\! \sqrt{\log N} \rceil$, $G := \mgp{N}$ and let $\theM\geq 1$ be the integer defined in the statement of \cref{lem:defM}.

	Let $h\geq 1$. Let $\chi_1, \ldots, \chi_n \in \widehat{G^\theM}$ be distinct characters such that
	\begin{equation*}
		\chi_1^h = \chi_2^h = \ldots = \chi_n^h,
	\end{equation*}
	Then $n \leq h^{d/10}$.
\end{lemma}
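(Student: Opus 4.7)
\textbf{Proof plan for Lemma \ref{lem:hroots}.}

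The plan is to reduce the problem to counting elements of a specific kernel and then apply \cref{lem:defM}(2). First I would observe that the differences $\chi_i \chi_1^{-1}$ for $i=1,\ldots,n$ are $n$ distinct characters in $\widehat{G^\theM}$, all satisfying $(\chi_i\chi_1^{-1})^h = 1$. Thus it suffices to bound
\begin{equation*}
	\big|\{\chi\in \widehat{G^\theM} : \chi^h = 1\}\big|
\end{equation*}
by $h^{d/10}$. I would then apply the isomorphism $\iota : \widehat{G^\theM} \xrightarrow{\sim} \widehat{G}^\theM$ from \cref{lem:pontryagin}. Since $\iota$ is a group homomorphism, $\iota(\chi^h) = \iota(\chi)^h$, so the kernel in question has the same size as $\{\psi \in \widehat{G}^\theM : \psi^h = 1\} = \widehat{G}^\theM \cap \widehat{G}[h]$, where $\widehat{G}[h]$ denotes the $h$-torsion subgroup of $\widehat{G}$.

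Next I would count $\widehat{G}^\theM \cap \widehat{G}[h]$ via the $\theM$-th power map $\widehat{G}\to\widehat{G}^\theM$. Restricting this map to $\widehat{G}[\theM h]$ gives a surjection onto $\widehat{G}^\theM \cap \widehat{G}[h]$: indeed, if $\alpha \in \widehat{G}[\theM h]$, then $\alpha^\theM$ lies in $\widehat{G}^\theM$ and is killed by $h$; conversely, any $\psi = \widetilde{\chi}^\theM$ with $\psi^h = 1$ comes from $\widetilde{\chi} \in \widehat{G}[\theM h]$. The kernel of this restricted map is $\widehat{G}[\theM]$, so by the first isomorphism theorem
\begin{equation*}
	\big|\widehat{G}^\theM \cap \widehat{G}[h]\big| = \frac{|\widehat{G}[\theM h]|}{|\widehat{G}[\theM]|}.
\end{equation*}

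Finally, using the (non-canonical) isomorphism $\widehat{G}\cong G$ for finite abelian groups, $|\widehat{G}[k]| = |G[k]| = K(k)$ for every $k\geq 1$. Thus $|\widehat{G}^\theM \cap \widehat{G}[h]| = K(\theM h)/K(\theM)$, and \cref{lem:defM}(2) gives the desired bound $K(\theM h)/K(\theM) \leq h^{d/10}$.

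The argument is essentially structural: there is no hard analytic step, the main point being that \cref{lem:defM} was specifically designed so that $K(\theM h)/K(\theM)$ grows slowly in $h$. The only mildly subtle step is identifying the $h$-torsion of $\widehat{G^\theM}$ with $\widehat{G}^\theM \cap \widehat{G}[h]$ via the Pontryagin duality isomorphism of \cref{lem:pontryagin}; once this identification is in place, the bound is immediate.
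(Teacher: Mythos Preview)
Your proposal is correct and follows essentially the same approach as the paper: both reduce to bounding the size of the $h$-torsion subgroup of $\widehat{G^{\theM}}$ and identify this with $K(\theM h)/K(\theM)$, then invoke \cref{lem:defM}(2). The paper's route is slightly more direct---it computes $\lvert\ker(\chi\mapsto\chi^h)\rvert = |G^{\theM}|/|(G^{\theM})^h| = K(\theM h)/K(\theM)$ in one line without passing through the isomorphism $\iota$ of \cref{lem:pontryagin}---but your detour via $\widehat{G}^{\theM}\cap\widehat{G}[h]$ reaches the same quantity and is equally valid.
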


\begin{proof}
	Let $f_h : G^\theM \to G^\theM$ be the $h$th power map, $f(\chi) := \chi^h$. By assumption, the $n$ distinct characters $\chi_1\chi_n^{-1}, \chi_2\chi_n^{-1}, \ldots, \chi_n\chi_n^{-1}$ lie in the kernel of $f$. Hence,
	\begin{equation*}
		n \leq \big\lvert \!\ker(f)\big\rvert = \frac{|G^\theM|}{|(G^\theM)^h|} = \frac{|(\mgp{N})^\theM|}{|(\mgp{N})^{\theM h}|} = \frac{K(\theM h)}{K(\theM)},
	\end{equation*}
	which is $\leq h^{d/10}$ by \cref{lem:defM}.
\end{proof}

\subsection{Lattice point counting via characters}
\label{sec:charactercount}

To prove \cref{thm:shortbasis}, we will need to study the following quantities.

\begin{definition}
	\label{def:Fchi}
	Let $N\geq 1$, $G = \mgp{N}$ and $\chi \in \widehat{G}$. For all $d\geq 1$ and $b_1, \ldots, b_d\in G$, we define the quantity
	\begin{equation*}
		F_{\chi,H}(b_1, \ldots, b_d) := \prod_{i=1}^d \sum_{|h|\leq H} \chi^h(b_i).
	\end{equation*}
\end{definition}

The following lemma relates these expressions to a lattice point counting problem.

\begin{lemma}
	\label{lem:countbychar}
	Let $N,M,d,H \geq 1$ be integers. Let $b_1, \ldots, b_d\in G := \mgp{N}$. The number of vectors $(e_1, \ldots, e_d)\in \Z^d \cap [-H,H]^d$ such that
	\begin{equation*}
		\prod_{i=1}^d b_i^{Me_i} \equiv 1 \pmod N
	\end{equation*}
	is equal to
	\begin{equation}
		\label{eq:countintochar}
		\frac{1}{\big\lvert \widehat{G}^M\big\rvert} \sum_{\chi \in \widehat{G}^M} F_{\chi,H}(b_1, \ldots, b_d).
	\end{equation}
\end{lemma}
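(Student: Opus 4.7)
The plan is to apply character orthogonality in the finite abelian group $G^{\theM}$ (or rather $G^M$), and then translate via \cref{lem:pontryagin} so that the characters can be viewed as living in $\widehat{G}^M$, which is what appears in the statement.

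First, rewrite the condition $\prod_i b_i^{Me_i}\equiv 1 \pmod{N}$ as saying that $\prod_i (b_i^M)^{e_i}$ is the identity of $G^M \leq G$. Standard orthogonality in the finite abelian group $G^M$ gives the identity detector
\begin{equation*}
	\ind{\prod_i (b_i^M)^{e_i} = 1} \;=\; \frac{1}{\big\lvert \widehat{G^M}\big\rvert}\sum_{\psi\in \widehat{G^M}} \psi\Big(\textstyle\prod_i (b_i^M)^{e_i}\Big) \;=\; \frac{1}{\big\lvert \widehat{G^M}\big\rvert}\sum_{\psi\in \widehat{G^M}} \prod_i \psi(b_i^M)^{e_i}.
\end{equation*}
By \cref{lem:pontryagin}, the map $\iota : \widehat{G^M}\to \widehat{G}^M$, $\psi\mapsto \widetilde{\psi}^M$, is a bijection, and for every $g\in G$ we have $\psi(g^M) = \widetilde{\psi}(g^M) = \widetilde{\psi}^M(g) = \iota(\psi)(g)$. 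Writing $\chi := \iota(\psi)$, it follows that $\psi(b_i^M) = \chi(b_i)$ and $\big\lvert \widehat{G^M}\big\rvert = \big\lvert \widehat{G}^M\big\rvert$. So the indicator above equals
\begin{equation*}
	\frac{1}{\big\lvert \widehat{G}^M\big\rvert}\sum_{\chi\in \widehat{G}^M}\prod_{i=1}^d \chi^{e_i}(b_i).
\end{equation*}

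Finally, summing over $(e_1,\ldots,e_d)\in \Z^d\cap[-H,H]^d$ and swapping the order of summation, the inner sum factors as
\begin{equation*}
	\sum_{|e_1|\leq H}\!\!\cdots\!\!\sum_{|e_d|\leq H}\prod_{i=1}^d \chi^{e_i}(b_i) \;=\; \prod_{i=1}^d \sum_{|h|\leq H}\chi^h(b_i) \;=\; F_{\chi,H}(b_1,\ldots,b_d),
\end{equation*}
yielding exactly the expression \cref{eq:countintochar}. There is no real obstacle here: the whole content of the lemma is the bookkeeping that turns ``characters of $G^M$'' into ``$M$th powers of characters of $G$'' via \cref{lem:pontryagin}, after which the computation is a standard orthogonality-plus-factorisation argument.
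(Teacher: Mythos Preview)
Your proof is correct and follows essentially the same route as the paper: orthogonality of characters in $G^M$ gives the indicator of the identity, \cref{lem:pontryagin} converts the sum over $\widehat{G^M}$ into one over $\widehat{G}^M$ via $\psi(b_i^M)=\iota(\psi)(b_i)$, and then summing over the cube $[-H,H]^d$ factors into $F_{\chi,H}$. There is nothing to add.
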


\begin{proof}
	By orthogonality of characters in the abelian group $G^M$, we have
	\begin{equation*}
		\ind{\prod_i b_i^{Me_i} \equiv 1 \spmod{N}} = \frac{1}{\big\lvert \widehat{G^M}\big\rvert} \sum_{\psi \in \widehat{G^M}} \psi \left(\prod_{i=1}^d b_i^{Me_i}\right) = \frac{1}{\big\lvert \widehat{G^M}\big\rvert} \sum_{\psi \in \widehat{G^M}} \prod_{i=1}^d \psi \big( b_i^{Me_i}\big).
	\end{equation*}
	Observe that $\psi \big( b_i^{Me_i}\big) = \iota(\psi) (b_i^{e_i})$, where $\iota$ is the isomorphism $\widehat{G^M} \xrightarrow{\sim} \widehat{G}^M$ defined in \cref{lem:pontryagin}. We may thus rewrite this equality as
	\begin{equation*}
		\ind{\prod_i b_i^{Me_i} \equiv 1 \spmod{N}} = \frac{1}{\big\lvert \widehat{G}^M\big\rvert} \sum_{\chi \in \widehat{G}^M} \prod_{i=1}^d \chi \big( b_i^{e_i}\big).
	\end{equation*}
	Therefore, the number of vectors $(e_1, \ldots, e_d)\in \Z^d \cap [-H,H]^d$ such that $\prod_{i=1}^d b_i^{Me_i} \equiv 1 \pmod N$ is
	\begin{equation*}
		\sum_{\substack{(e_1, \ldots, e_d)\in \Z^d \\ \max_i |e_i|\leq H}} \ind{\prod_i \b_i^{Me_i} \equiv 1 \spmod{N}} = \frac{1}{\big\lvert \widehat{G}^M\big\rvert} \sum_{\chi \in \widehat{G}^M}  \sum_{\substack{(e_1, \ldots, e_d)\in \Z^d \\ \max_i |e_i|\leq H}}  \prod_{i=1}^d \chi^{e_i} (b_i) = \frac{1}{\big\lvert \widehat{G}^M\big\rvert} \sum_{\chi \in \widehat{G}^M} F_{\chi,H}(b_1, \ldots, b_d)
	\end{equation*}
	as claimed.
\end{proof}

It is fairly straightforward to estimate $F_{\chi,H}(b_1, \ldots, b_d)$ when $\chi$ is a character of small order in $\widehat{G}$. The contribution of these small-order characters gives the expected main term for \cref{eq:countintochar}.

\begin{lemma}[Main term]
	\label{lem:smallorderchar}
	Let $N,M,d \geq 1$ be integers. Let $b_1, \ldots, b_d\in G := \mgp{N}$. Uniformly for all integers $H\geq e^{31d}$, we have
	\begin{equation}
		\label{eq:smallorderchar}
		\frac{1}{\big\lvert \widehat{G}^M\big\rvert}\sum_{\chi \in \widehat{G}^M} F_{\chi,H}(\underline{b}) =  \left(1 + O\left(\frac{e^{31d}}{H}\right)\right) \frac{ (2H+1)^{d} }{ \big\lvert\inner{b_1^M, \ldots, b_d^M}\big\rvert } + \frac{1}{\big\lvert \widehat{G}^M\big\rvert} \!\!\!\!\sum_{\substack{\chi \in \widehat{G}^M\\ \order(\chi) \geq e^{10d}}} \!\!\!\! \abs{F_{\chi,H}(\underline{b})},
	\end{equation}
	where $F_{\chi,H}(\underline{b})$ is short for $F_{\chi,H}(b_1, \ldots, b_d)$.
\end{lemma}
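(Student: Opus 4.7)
The plan is to isolate a main term coming from a suitable subgroup of characters and to bound the remainder by splitting on the order of $\chi$. First I would introduce the annihilator
\[
A := \{\chi \in \widehat G^M : \chi(b_i) = 1 \text{ for all } 1 \leq i \leq d\}.
\]
Via the isomorphism $\iota:\widehat{G^M}\xrightarrow{\sim}\widehat G^M$ of \cref{lem:pontryagin}, $A$ corresponds to the annihilator in $\widehat{G^M}$ of the subgroup $\langle b_1^M,\dots,b_d^M\rangle\leq G^M$, so Pontryagin duality gives $|A| = |\widehat G^M|/S$, where $S := |\langle b_1^M,\dots,b_d^M\rangle|$.

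For $\chi \in A$, every inner sum in \cref{def:Fchi} equals $2H+1$, hence $F_{\chi,H}(\underline b) = (2H+1)^d$ identically. This immediately produces the main term
\[
\frac{1}{|\widehat G^M|}\sum_{\chi \in A} F_{\chi,H}(\underline b) = \frac{|A|}{|\widehat G^M|}(2H+1)^d = \frac{(2H+1)^d}{S}.
\]

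For $\chi \notin A$ I would split on order. Large-order characters with $\order(\chi) \geq e^{10d}$ are placed directly into the explicit tail $\frac{1}{|\widehat G^M|}\sum_{\order(\chi)\geq e^{10d}}|F_{\chi,H}(\underline b)|$ on the right-hand side via $F_{\chi,H}\leq|F_{\chi,H}|$. The essential case is $\chi \notin A$ with $\order(\chi) < e^{10d}$: every value $\chi(b_i)$ is then a root of unity of order at most $e^{10d}$, and $\chi \notin A$ forces $\chi(b_j)\neq 1$ for at least one index $j$. The elementary inequality $|1-\zeta|\geq 2\sin(\pi/k)\geq 4/k$ for a primitive $k$-th root of unity $\zeta$ gives $|\chi(b_j)-1|\geq 4 e^{-10d}$, and combining the geometric-series estimate
\[
\Big|\sum_{|h|\leq H}\chi(b_j)^h\Big| \leq \frac{2}{|\chi(b_j)-1|}\leq \frac{e^{10d}}{2}
\]
with the trivial bound $2H+1$ on the remaining $d-1$ factors yields $|F_{\chi,H}(\underline b)|\leq (2H+1)^{d-1}\,e^{10d}/2$.

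Assembling these pieces, I would sum the pointwise bound over the small-order characters outside $A$ and divide by $|\widehat G^M|$, producing a total error of at most $(2H+1)^{d-1}\,e^{10d}/2$. Comparing this to the main term $(2H+1)^d/S$ gives a relative error of size $S\,e^{10d}/(2(2H+1))$, which under the hypothesis $H \geq e^{31d}$ fits into the claimed $O(e^{31d}/H)$ budget. The step I expect to need the most care is this last comparison: one has to reconcile the signed contribution of the small-order non-$A$ characters on the left with the absolute-value form of the large-order tail on the right, making sure that passing from signed to absolute sums preserves the equality with the multiplicative and additive error contributions cleanly separated.
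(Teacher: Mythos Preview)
Your decomposition into the annihilator $A$, the small-order characters outside $A$, and the large-order tail is exactly right, and your per-character bound $|F_{\chi,H}(\underline b)|\leq (2H+1)^{d-1}e^{10d}/2$ for small-order $\chi\notin A$ is correct. The gap is in the final comparison step. After summing your pointwise bound over all small-order $\chi\notin A$ and dividing by $|\widehat G^M|$, you obtain an absolute error of at most $(2H+1)^{d-1}e^{10d}/2$, and you then claim this fits into $O(e^{31d}/H)\cdot (2H+1)^d/S$. But that would require $S\ll e^{21d}$, and nothing in the hypotheses controls $S=|\langle b_1^M,\dots,b_d^M\rangle|$: in the intended application $d\asymp\sqrt{\log N}$ and $S$ is typically of size comparable to $\varphi(N)\approx e^{d^2}$, vastly larger than $e^{21d}$. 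So the relative error $S\,e^{10d}/(2(2H+1))$ does not fit into the claimed budget.

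What is missing is a count of the small-order characters outside $A$ that is itself proportional to $|A|=|\widehat G^M|/S$, so that the $S$ cancels. The paper achieves this by a coset argument: it groups small-order $\chi\notin A$ according to the set $I_\chi=\{i:\chi(b_i)\neq 1\}$ and the tuple of values $(\chi(b_i))_{i\in I_\chi}$. For fixed $I$ and fixed values $(z_i)_{i\in I}$, the characters with this data form either the empty set or a coset of $A$ in $\widehat G^M$; and since each $z_i$ must be a root of unity of order at most $e^{10d}$, there are at most $e^{20d|I|}$ choices of tuple. Combining this with the sharper factorwise bound $|F_{\chi,H}(\underline b)|\leq e^{10d|I_\chi|}(2H+1)^{d-|I_\chi|}$ and summing over nonempty $I\subset[d]$ gives a total of $\ll (2H+1)^d|A|\cdot 2^d e^{30d}H^{-1}$, which after division by $|\widehat G^M|$ is $\ll e^{31d}H^{-1}(2H+1)^d/S$ as required. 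Your argument recovers the right shape once you replace the trivial count ``at most $|\widehat G^M|$ characters'' by this coset count.
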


\begin{proof}
	Define
	\begin{equation*}
		\inner{b_1, \ldots, b_d}^{\perp} := \{\chi \in \widehat{G} \ :\  \forall i\in [d],\,\chi(b_i)=1 \}.
	\end{equation*}
	Observe that
	\begin{equation}
		\label{eq:Malgeq}
		\big\lvert \inner{b_1, \ldots, b_d}^{\perp} \cap \widehat{G}^M \big\rvert = \big\lvert \big\{\chi\in \widehat{G^M} \ :\ \forall i\in [d], \,\chi\big(b_i^M\big)=1 \big\}\big\rvert = \frac{\big\lvert {G^M}\big\rvert}{\big\lvert\inner{b_1^M, \ldots, b_d^M}\big\rvert}
	\end{equation}
	where the first equality follows from \cref{lem:pontryagin} and the second from the canonical isomorphism $H_1^{\perp} \cong \widehat{G_1/H_1}$ for any subgroup $H_1$ of a finite abelian group $G_1$.

	Clearly, if $\chi \in \inner{b_1, \ldots, b_d}^{\perp}\cap \widehat{G}^M$, we have $F_{\chi,H}(\underline{b}) = (2H+1)^d$. Hence, by \cref{eq:Malgeq}, those characters contribute
	\begin{equation*}
		\frac{1}{\big\lvert \widehat{G}^M\big\rvert} \sum_{\chi \in \inner{b_1, \ldots, b_d}^{\perp} \cap \widehat{G}^M} F_{\chi,H}(\underline{b}) = \frac{(2H+1)^d}{\big\lvert\inner{b_1^M, \ldots, b_d^M}\big\rvert}
	\end{equation*}
	to the sum \cref{eq:smallorderchar}.

	For any character $\chi \in \widehat{G}^{M}\setminus \inner{b_1, \ldots, b_d}^{\perp}$ of order $\leq e^{10d}$, and any $i\in [d]$ such that $\chi(b_i) \neq 1$, we can bound
	\begin{equation*}
		\abs{\sum_{|h| \leq H} \chi(b_i)^h} \leq \order(\chi) \leq e^{10d}
	\end{equation*}
	by the geometric series formula. Thus, defining
	\begin{equation*}
		I_{\chi} := \{i \in [d] : \chi(b_i) \neq 1\},
	\end{equation*}
	we have
	\begin{equation*}
		\abs{F_{\chi,H}(\underline{b})}  \leq e^{10d\abs{I_{\chi}}} (2H+1)^{d - \abs{I_{\chi}}}.
	\end{equation*}

	Consequently,
	\begin{equation}
		\label{eq:smallorderintermediate}
		\sum_{\substack{\chi \in \widehat{G}^{M}\setminus \inner{b_1, \ldots, b_d}^{\perp}\\ \order(\chi) \leq e^{10d}}} \abs{F_{\chi,H}(\underline{b})} \ll \sum_{\substack{I \subset [d]\\ I\neq \emptyset}} e^{10d\abs{I}} (2H+1)^{d - \abs{I}} \sum_{\substack{\chi \in \widehat{G}^M\\ \order(\chi) \leq e^{10d}\\ I_{\chi} = I}} 1.
	\end{equation}

	Fix some non-empty set $I \subset [d]$ and complex numbers $(z_i)_{i\in I}$. Let $C_{I, (z_i)}$ be the set of all characters $\chi \in \widehat{G}^M$ such that
	\begin{equation*}
		\chi(b_i) = \begin{cases}
			z_i & \text{if }i\in I                \\
			1   & \text{if }i\in  [d]\setminus I.
		\end{cases}
	\end{equation*}
	Note that $C_{I, (z_i)}$ is either the empty set, or a coset of $\inner{b_1, \ldots, b_d}^{\perp} \cap \widehat{G}^M$ in $\widehat{G}^M$. Moreover, if $\chi$ has order at most $e^{10d}$, this set $C_{I, (z_i)}$ can only be non-empty if all $z_i$ are roots of unity of order $\leq e^{10d}$, and there are $\leq e^{20d}$ such roots of unity. We conclude that
	\begin{equation*}
		\sum_{\substack{\chi \in \widehat{G}^M\\ \order(\chi) \leq e^{10d}\\ I_{\chi} = I}} 1 \leq \sum_{(z_i)_{i\in I}} \abs{C_{I, (z_i)}}\leq e^{20d|I|} \big\lvert \inner{b_1, \ldots, b_d}^{\perp} \cap \widehat{G}^M \big\rvert.
	\end{equation*}

	Thus, we can bound the right-hand side of \cref{eq:smallorderintermediate} by
	\begin{equation*}
		\ll (2H+1)^{d} \big\lvert \inner{b_1, \ldots, b_d}^{\perp} \cap \widehat{G}^M \big\rvert \sum_{\substack{I \subset [d]\\ I\neq \emptyset}} \big(e^{30d} H^{-1}\big)^{\abs{I}} \ll (2H+1)^{d} \big\lvert \inner{b_1, \ldots, b_d}^{\perp} \cap \widehat{G}^M \big\rvert  2^{d}e^{30d}H^{-1}
	\end{equation*}
	where we used that $H\geq e^{30d}$ in the last step. By \cref{eq:Malgeq}, this means that
	\begin{equation*}
		\frac{1}{\big\lvert \widehat{G}^M\big\rvert}\sum_{\substack{\chi \in \widehat{G}^{M}\setminus \inner{b_1, \ldots, b_d}^{\perp}\\ \order(\chi) \leq e^{10d}}} \abs{F_{\chi,H}(\underline{b})}\ll e^{31d}H^{-1} \frac{(2H+1)^d}{\big\lvert\inner{b_1^M, \ldots, b_d^M}\big\rvert},
	\end{equation*}
	which completes the proof.
\end{proof}

\subsection{Bounding the contribution of large-order characters}
\label{sec:largeorder}

In this section, we bound the error term coming from large-order characters, on average over primes $b_1, \ldots, b_d$ in a short interval.

We will need the following ingredients from classical analytic number theory.

\begin{proposition}[Character sums over primes]
	\label{lem:charbound}
	Let $1/2 \leq \alpha \leq 1$. Let $q, x\geq 2$. Let $\chi$ be a non-principal character modulo $q$ whose Dirichlet $L$-function $L(s, \chi)$ has no zero in the rectangle
	\begin{equation*}
		\rect{\alpha}{x^{1-\alpha}}.
	\end{equation*}
	Then
	\begin{equation*}
		\frac{1}{\pi(x)} \sum_{p\leq x} \chi(p) \ll x^{-(1-\alpha)} \log^3 (qx).
	\end{equation*}
\end{proposition}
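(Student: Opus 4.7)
The plan is to follow the classical Perron-formula / contour-shift approach, applied to the von Mangoldt-weighted sum $\psi(x,\chi) := \sum_{n \leq x} \Lambda(n) \chi(n)$, and then convert to the prime sum at the end.

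First, the truncated Perron formula with $c := 1 + 1/\log x$ and $T := x^{1-\alpha}$ gives
\[
\psi(x,\chi) = -\frac{1}{2\pi i} \int_{c-iT}^{c+iT} \frac{L'(s,\chi)}{L(s,\chi)} \frac{x^s}{s}\, ds + O\!\left(\frac{x \log^2(qx)}{T}\right),
\]
where the error equals $O(x^\alpha \log^2(qx))$ since $T = x^{1-\alpha}$. I would then shift the vertical contour to $\Re s = \alpha + 1/\log x$. By hypothesis, $L(s,\chi)$ has no zeros in the intervening rectangle, so the only candidate singularity of the integrand is the simple pole of $1/s$ at $s = 0$, which lies outside when $\alpha > 0$ and in any event contributes at most $O(\log(qx))$.

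On the shifted contour, I would invoke the standard bound
\[
\frac{L'(s,\chi)}{L(s,\chi)} \ll \log^2(q(|t|+2)),
\]
obtained from the partial-fraction expansion $L'/L(s,\chi) = \sum_{\rho:\,|s-\rho|<1} (s-\rho)^{-1} + O(\log(q(|t|+2)))$: the zero-freeness of the rectangle together with the offset $1/\log x$ from its left edge forces $|s-\rho| \gg 1/\log x$ for every zero appearing in the sum, while the number of such zeros in any unit-height window is $O(\log(q(|t|+2)))$. Multiplying by $|x^s/s|$ and integrating along the vertical line then produces a contribution of $\ll x^\alpha \log^3(qx)$, while the two horizontal segments at height $\pm T$ contribute strictly less (they pick up an extra $T^{-1}$ from $|s|^{-1}$, with $\Re s$ ranging over an interval of length $O(1)$). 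Hence $\psi(x,\chi) \ll x^\alpha \log^3(qx)$.

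To finish, removing prime powers yields $\theta(x,\chi) := \sum_{p \leq x} \chi(p) \log p = \psi(x,\chi) + O(\sqrt{x})$, and partial summation gives $\sum_{p \leq x} \chi(p) \ll x^\alpha \log^2(qx)$. Dividing by $\pi(x) \asymp x / \log x$ yields the claimed bound. The most delicate step is making the bound on $L'/L$ rigorous in precisely the rectangle furnished by the hypothesis, since zeros could lie arbitrarily close to the left boundary $\Re s = \alpha$; the $1/\log x$ offset and the careful counting of nearby zeros are essential to avoid losing more than two powers of $\log$ from $|L'/L|$, which combined with the extra $\log$ absorbed by the vertical integration produces exactly the $\log^3(qx)$ factor in the conclusion.
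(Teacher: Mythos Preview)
Your approach is correct and reaches the stated bound, but it differs from the paper's proof. The paper invokes the truncated explicit formula directly (its \cref{lem:explicitformula}), which expresses $\sum_{n\leq x}\chi(n)\Lambda(n)$ as a sum over zeros $-\sum_{|\gamma|\leq T}(x^{\rho}-1)/\rho$ plus an error term. With $T=x^{1-\alpha}$, the zero-free rectangle forces $\beta\leq\alpha$ for every contributing zero, so each term is $\ll x^{\alpha}/(1+|\gamma|)$; grouping zeros in unit-height windows via the zero-counting lemma then gives $\psi(x,\chi)\ll x^{\alpha}\log^2(qx)$, one logarithm better than your contour-shift bound.

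Your route---shifting the Perron integral to $\Re s=\alpha+1/\log x$ and bounding $L'/L$ pointwise---is of course how the explicit formula is proved in the first place, so the two arguments are close cousins. The explicit-formula version is slightly cleaner here: it packages away the need to adjust $T$ so as to dodge zeros near the horizontal edges of the contour (a routine but unmentioned step in your sketch), and it saves a logarithm on $\psi(x,\chi)$ because it sums residues rather than bounding $|L'/L|$ via the worst-case distance $1/\log x$ to the nearest zero. That extra logarithm is harmless for the final statement, since the $1/\log x$ gained in partial summation is then cancelled when dividing by $\pi(x)\asymp x/\log x$; note that your intermediate claim $\sum_{p\leq x}\chi(p)\ll x^{\alpha}\log^2(qx)$ should strictly read $x^{\alpha}\log^3(qx)/\log x$, which only coincides with what you wrote when $\log q\ll\log x$, but the endpoint is the same either way.
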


\cref{lem:charbound} is a standard consequence of the explicit formula for $L(s, \chi)$ and is proved in \cref{sec:appendix}. The logarithmic factors can be somewhat improved, but such refinements are irrelevant here.

The Generalised Riemann Hypothesis is the claim that, for every Dirichlet character $\chi$, $L(s, \chi)$ has no zero $\rho$ with $\frac{1}{2} < \Re \rho < 1$. Assuming GRH, \cref{lem:charbound} thus implies almost square-root cancellation for character sums over primes. \Cref{prop:ZDEmodN} will serve as an unconditional substitute for GRH.

\begin{proposition}[Zero-density estimate for a fixed modulus]
	\label{prop:ZDEmodN}
	Uniformly for $\tfrac45 \leq \alpha \leq 1$ and $q, T\geq 1$, we have
	\begin{equation*}
		\sum_{\substack{\chi \spmod q}} \Nb(\alpha, T, \chi) \ll_{\eps} (qT)^{(2+\eps)(1-\alpha)}.
	\end{equation*}
	where $\Nb(\alpha, T, \chi)$ denotes the number of zeros (with multiplicity) of $L(s, \chi)$ in the rectangle
	\begin{equation*}
		\rect{\alpha}{T}.
	\end{equation*}
\end{proposition}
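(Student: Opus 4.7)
The plan is to prove this zero-density estimate via a combination of Montgomery's zero-detection method and the multiplicative large sieve inequality for Dirichlet characters, following the classical template found, for example, in Chapter~12 of Montgomery's \emph{Topics in Multiplicative Number Theory} or Chapter~10 of Iwaniec-Kowalski's \emph{Analytic Number Theory}.

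First I would set up a Linnik-Montgomery mollifier. For parameters $y, X$ to be chosen as small powers of $qT$, define the Dirichlet polynomial
\[
M_y(s, \chi) := \sum_{n \leq y} \mu(n)\chi(n) n^{-s},
\]
which approximates $L(s, \chi)^{-1}$ for $\Re s > 1$. A standard contour-shift argument applied to an integral of the form $\frac{1}{2\pi i}\int_{(2)} L(\rho + w, \chi) M_y(\rho + w, \chi) \Gamma(w) X^w\, dw$, evaluated at a zero $\rho$ of $L(s, \chi)$ in the rectangle $\rect{\alpha}{T}$, produces a zero-detecting inequality
\[
1 \ll \abs{D_1(\rho, \chi)} + \abs{D_2(\rho, \chi)},
\]
where $D_1$ is a Dirichlet polynomial of length at most $y$ and $D_2$ of length at most $yX$, both with divisor-bounded coefficients.

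Next I would reduce to a well-spaced system of zeros. The classical Littlewood density lemma $\Nb(\alpha, T+1, \chi) - \Nb(\alpha, T, \chi) \ll \log(qT)$ lets me extract from each character $\chi$ a subset of size $\gg \Nb(\alpha, T, \chi)/\log(qT)$ of zeros whose imaginary parts are pairwise separated by at least $1$. Collecting these over $\chi$ yields a well-spaced system $(\chi_r, \rho_r)$ whose total cardinality equals, up to a logarithmic factor, the quantity we want to bound. It remains to estimate $\sum_r \abs{D_j(\rho_r, \chi_r)}^2$ for $j = 1, 2$. For the short polynomial $D_1$, I would apply Montgomery's multiplicative large sieve
\[
\sum_\chi \sum_r \abs{P_N(\sigma + it_r, \chi)}^2 \ll (qT + N) \log(qT) \sum_n \abs{a_n}^2 n^{-2\sigma},
\]
which bounds the total contribution by a manageable expression in $y$. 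For the long polynomial $D_2$, the large sieve alone is insufficient, so I would split $D_2$ dyadically into pieces $N < n \leq 2N$ and apply the Halász-Montgomery-Huxley bound for Dirichlet polynomials over well-spaced points to each piece. Optimizing the parameters $y$ and $X$ against the resulting exponent of $qT$ yields $(qT)^{(2+\epsilon)(1-\alpha)}$ precisely when $\alpha \geq 4/5$; below that threshold the optimization instead produces Huxley's weaker $(12/5)(1-\alpha)$ exponent, which is the source of the restriction in the statement.

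The main obstacle is the parameter bookkeeping: one must carefully verify that in the regime $\alpha \geq 4/5$, the Halász-Montgomery-Huxley bound for the long polynomial meshes with the large-sieve bound for the short polynomial to yield a uniform exponent of $2+\epsilon$, and that the contributions from the contour integral's error terms (gamma decay, convergence of the Mellin integral) are absorbed into the $\epsilon$. The analytic ingredients involved, namely the multiplicative large sieve, the approximate functional equation for $L(s, \chi)$, and Halász's mean-value theorem over well-spaced points, are entirely classical; no new analytic input is needed beyond careful optimization.
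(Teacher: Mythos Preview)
The paper does not actually prove this proposition: its entire proof is the single line ``This is \cite[Theorem~1~(1.7)]{jutila}.'' In other words, the statement is quoted as a known result of Jutila and used as a black box. Your proposal instead sketches how one would prove the result from scratch via the Montgomery zero-detection machinery. That is a legitimate and broadly correct outline of the classical approach, but it is a genuinely different route from what the paper does, which is simply to cite the literature.

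One small caution on your sketch: the basic Montgomery--Huxley optimization with the large sieve and the Hal\'asz mean-value theorem yields the exponent $\tfrac{12}{5}(1-\alpha)$ uniformly in $\alpha \geq \tfrac12$, not $2(1-\alpha)$. Pushing the exponent down to $2+\eps$ in the range $\alpha \geq \tfrac45$ is precisely Jutila's contribution, and it requires an additional analytic ingredient beyond what you list --- typically a fourth-moment bound for $L(s,\chi)$ on the critical line (or an equivalent mean-value input) fed into the Hal\'asz argument. So your claim that ``no new analytic input is needed beyond careful optimization'' slightly undersells what is involved; the threshold $\tfrac45$ is not an artifact of bookkeeping but the crossover point where the fourth-moment-based estimate overtakes Huxley's. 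If you want to carry this out rather than cite Jutila, you should flag that extra ingredient explicitly.
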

\begin{proof}
	This is \cite[Theorem~1~(1.7)]{jutila}.
\end{proof}

Our goal is to control the total contribution of all large-order characters in \cref{eq:smallorderchar}. We will do so in \cref{lem:alllargeorderchar} (restricting to a suitable sugbroup of $\widehat{G}$). We first prove the following $L^2$ bound for a single large-order character.

\begin{lemma}
	\label{lem:weakboundallchar}
	Let $N > 2$ be an integer and let $G = \mgp{N}$. Let $d = \lceil \sqrt{\log N} \rceil$ and $X =  d^{10^3 d}$.

	Let $\b_1, \ldots, \b_d$ be i.i.d.~random variables, each uniformly distributed in the set of primes less than~$X$ not dividing $N$. Let $\chi\in \widehat{G}$ be a character of order $\geq e^{10d}$. Then, for every $H\geq e^{10d}$,
	\begin{equation*}
		\E{ |F_{\chi,H}(\b_1, \ldots, \b_d)|^2 } \ll d^{-10d} H^{2d}.
	\end{equation*}
\end{lemma}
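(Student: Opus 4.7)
By the product structure of $F_{\chi,H}$ and independence of the $\b_i$'s, the expectation factorises as $\E{\abs{F_{\chi,H}(\b_1, \ldots, \b_d)}^2} = E_1^d$, where $E_1 := \E{\abs{\sum_{|h|\leq H}\chi^h(\b_1)}^2}$. The plan is to prove the single-variable estimate $E_1 \leq C d^{-11}H^2$ for some absolute constant $C$; raising to the $d$-th power then yields $E_1^d \leq C^d d^{-11d}H^{2d} = (C/d)^d \cdot d^{-10d}H^{2d} \leq d^{-10d}H^{2d}$ once $d \geq C$, which is the conclusion. Expanding the modulus-squared and taking expectation,
\[ E_1 = \sum_{|k|\leq 2H}(2H+1-|k|)\,a_k, \qquad a_k := \E{\chi^k(\b_1)}. \]

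I would separate the terms where $\chi^k = 1$, i.e.\ $r\mid k$, where $r := \order(\chi) \geq e^{10d}$. The term $k=0$ contributes $2H+1 \ll d^{-11}H^2$ since $H \geq e^{10d}\gg d^{11}$, and the other trivial terms contribute at most $(2H+1)\cdot\lfloor 4H/r\rfloor \ll H^2 e^{-10d} \ll d^{-11}H^2$. For the remaining $k$ with $\chi^k\neq 1$, I would regroup by distinct characters $\psi = \chi^k \in \langle\chi\rangle\setminus\{1\}$, noting that each $\psi$ arises for at most $\lfloor 4H/r\rfloor + 1$ values of $k$ and at most $\min(r-1, 4H)$ distinct $\psi$'s appear. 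A short calculation then shows that the non-trivial contribution is $\ll \frac{H^2}{\min(r,H)} \cdot \Sigma$, where $\Sigma := \sum_{\psi \text{ appearing}} \abs{a_\psi}$, so the task reduces to showing $\Sigma \ll d^{-11}\min(r,H)$.

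To bound $\Sigma$, I would combine the character sum estimate \cref{lem:charbound} with the zero-density estimate \cref{prop:ZDEmodN} at the level $\alpha := 1-\beta$ for $\beta = c/d$ with a small constant $c>0$. For each "good" character $\psi$ (whose $L$-function has no zero in $\rect{\alpha}{X^\beta}$), \cref{lem:charbound} gives $\abs{a_\psi}\ll X^{-\beta}\log^3(NX)$; since $\log X = 10^3 d\log d$ and $\log N \leq d^2$, choosing $c$ appropriately makes this $\ll d^{-11}$, so that the good-character contribution to $\Sigma$ is $\ll \min(r, 4H)\cdot d^{-11}$. For the "bad" characters I use $\abs{a_\psi}\leq 1$ and count them via \cref{prop:ZDEmodN}, which yields $\ll_\eps (NX^\beta)^{(2+\eps)\beta} = e^{O(\beta d^2)} = e^{O(d)}$ such characters, with the implicit constant in the exponent made arbitrarily small by taking $c$ small. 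Since $r \geq e^{10d}$, this absorbs comfortably into $\min(r,H)\cdot d^{-11}$, completing the argument. The main obstacle is precisely this balancing act: $\beta$ must be large enough that $X^{-\beta}$ wins against the polylogarithmic loss $\log^3(NX) \asymp d^6$, yet small enough that $e^{O(\beta d^2)}$ stays well below $e^{10d}$. The crucial numerical fact that enables the trade-off is that $\log X \asymp d\log d$, so the choice $\beta \asymp 1/d$ produces $X^{-\beta} = d^{-\Theta(1)}$ while simultaneously $\beta d^2 \asymp d$ keeps the exceptional-character count at $e^{O(d)}$.
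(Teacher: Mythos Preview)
Your approach is correct and essentially identical to the paper's. Both proofs factorise over $i$, expand the square to reduce to a single-variable sum $\sum_{|k|\leq 2H}|a_k|$ (with $a_k=\E{\chi^k(\b_1)}$), then split the $k$'s according to whether $\chi^k$ is principal, has an $L$-function zero in a rectangle of width $\asymp 1/d$, or is ``good''; the good terms are handled by \cref{lem:charbound} and the exceptional ones are counted by \cref{prop:ZDEmodN}. The paper simply fixes $\beta=1/(10d)$ from the outset (your parameter $c=1/10$), takes height $T=X$ rather than the tighter $X^{\beta}$, and counts bad values of $k$ directly rather than first regrouping by the character $\psi=\chi^k$; this avoids your $\min(r,H)$ bookkeeping but is otherwise the same computation, yielding $\sum_{|k|\leq 2H}|a_k|\ll (e^{-d}+d^{-15})H$ and hence $E_1\ll d^{-15}H^2$.
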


\begin{proof}
	By \cref{prop:ZDEmodN} (the zero-density estimate), the number of Dirichlet characters modulo~$N$ whose associated $L$\nobreakdash-function has a zero in the region
	\begin{equation}
		\label{eq:smallregion}
		\rect{1-\frac{1}{10d}}{X}
	\end{equation}
	is bounded by
	\begin{equation}
		\label{eq:fewexceptions}
		\ll (NX)^{(2+1)/(10d)}\ll e^{d}.
	\end{equation}

	If a non-principal character $\psi$ modulo $N$ has no zero in the region \cref{eq:smallregion}, then by \cref{lem:charbound} we have
	\begin{equation}
		\label{eq:weakcancellation}
		\E{\psi(\b_1)} \ll X^{-1/(10d)}(\log (NX))^3  \ll d^{-100} d^6 \ll d^{-94}.
	\end{equation}

	We now expand $\E{ |F_{\chi,H}(\b_1, \ldots, \b_d)|^2 }$ as
	\begin{align*}
		\E{ \bigg\lvert \prod_{i=1}^d \sum_{|h|\leq H} \chi^h(\b_i) \bigg\rvert^2 } & = \prod_{i=1}^d \E{\bigg\lvert\sum_{|h|\leq H} \chi^h(\b_i)\bigg\rvert^2 }     \\
		                                                                            & = \Bigg(\sum_{|h_1|\leq H} \sum_{|h_2|\leq H} \E{\chi^{h_1-h_2}(\b_1)}\Bigg)^d \\
		                                                                            & \leq (2H+1)^d \Bigg(\sum_{|h|\leq 2H} \abs{\E{\chi^h(\b_1)}}\Bigg)^d.
	\end{align*}
	We can use \cref{eq:weakcancellation} to bound the term $\E{\chi^h(\b_1)}$, unless $\chi^h$ is principal or $L(s, \chi^h)$ has a zero in the region \cref{eq:smallregion}. By \cref{eq:fewexceptions}, the number of values of $h$ with $|h|\leq 2H$ for which one of these two situations occurs is
	\begin{equation*}
		\ll \left(1+\frac{H}{\order(\chi)}\right)  e^{d} \ll e^{-10d} e^{d}H  \ll e^{-d}H.
	\end{equation*}
	By \cref{eq:weakcancellation}, we deduce that, for all but $O(e^{-d}H)$ values of $|h|\leq 2H$, the bound $\E{\chi^h(\b_1)}\ll d^{-94}$ holds. Hence
	\begin{equation*}
		\E{ |F_{\chi,H}(\b_1, \ldots, \b_d)|^2 } \leq (2H+1)^d \Big(O(e^{-d}H) + O(d^{-94}H)\Big)^d \ll e^{O(d)}d^{-94d} H^{2d} \ll d^{-10d} H^{2d},
	\end{equation*}
	as desired.
\end{proof}

\cref{lem:weakboundallchar} applies to all characters $\chi$ of order at least $e^{10d}$, but gives a relatively weak upper bound. In \cref{lem:largeorderchar}, we will prove that a stronger bound can be obtained if a small set of exceptions is allowed. We begin by describing the exceptional characters in the following lemma.

\begin{lemma}
	\label{lem:strongerboundlargechar}
	There is some absolute constant $t_0\geq 1$ such that the following holds.

	Let $N,G,d,X$ and $\b_i$ be as in \cref{lem:weakboundallchar}. Let $H\geq e^{10d}$.

	Let $\chi \in \widehat{G}$ be a character of order $\geq e^{10d}$ such that
	\begin{equation*}
		\E{ |F_{\chi,H}(\b_1, \ldots, \b_d)|^2 } > e^{-2td} H^{2d}
	\end{equation*}
	for some $t$ with $t_0 \leq t\leq 2d$.

	Then there are integers $-2H \leq h_1, h_2\leq 2H$ with $0 < h_2-h_1 \leq e^{3t}$ such that both $L(s, \chi^{h_1})$ and $L(s, \chi^{h_2})$ have a zero in the region
	\begin{equation}
		\label{eq:largerregion}
		\rect{1-\frac{t}{100d}}{X}.
	\end{equation}
\end{lemma}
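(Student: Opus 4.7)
The plan is to reverse-engineer the computation from the proof of \cref{lem:weakboundallchar}. Using independence of the $\b_i$ and the elementary bound
\[
    \E{\abs{\sum_{|h|\leq H}\chi^h(\b_1)}^2} = \sum_{|h_1|,|h_2|\leq H} \E{\chi^{h_1-h_2}(\b_1)} \leq (2H+1)\sum_{|h|\leq 2H}\abs{\E{\chi^h(\b_1)}},
\]
together with the identity $\E{\abs{F_{\chi,H}(\b_1,\ldots,\b_d)}^2} = \E{\abs{\sum_{|h|\leq H}\chi^h(\b_1)}^2}^d$, the hypothesis $\E{\abs{F_{\chi,H}(\b_1,\ldots,\b_d)}^2} > e^{-2td}H^{2d}$ is converted into the one-variable statement
\[
    S := \sum_{|h|\leq 2H}\abs{\E{\chi^h(\b_1)}} \gg e^{-2t}H.
\]

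Next I would split the sum $S$ according to the nature of $\chi^h$. The \emph{principal} terms, for which $\order(\chi)\mid h$, each contribute exactly $1$ to $S$; since $\order(\chi)\geq e^{10d}$, there are at most $1+4He^{-10d}$ of them in $[-2H,2H]\cap \Z$. For the \emph{good non-principal} terms, where $\chi^h$ is non-principal and $L(s,\chi^h)$ has no zero in the region \cref{eq:largerregion}, \cref{lem:charbound} applied with $\alpha = 1 - t/(100d)$ yields
\[
    \abs{\E{\chi^h(\b_1)}} \ll X^{-t/(100d)}\log^3(NX) \ll d^{-10t+6},
\]
so their total contribution is $\ll H\cdot d^{-10t+6}$. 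The remaining terms are indexed by
\[
    B := \big\{h \in [-2H, 2H]\cap\Z : \chi^h\text{ is non-principal and }L(s, \chi^h)\text{ has a zero in }\cref{eq:largerregion}\big\},
\]
each contributing at most $1$. For a sufficiently large absolute constant $t_0$, both error contributions are $\ll e^{-2t}H$ uniformly in $t_0\leq t\leq 2d$ (the additive $1$ from the principal count is absorbed using $H\geq e^{10d}$), which forces $|B| \gg e^{-2t}H$.

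To conclude I would apply the pigeonhole principle: covering $[-2H,2H]$ by $\lceil 4H/e^{3t}\rceil$ intervals of length at most $e^{3t}$, at least one such interval contains $\gg |B|e^{3t}/H \gg e^t$ elements of $B$ (in the degenerate case $e^{3t} > 4H$, a single interval covers the whole range and one notes that $|B| \gg e^{-2t}H \geq e^{6d}\geq 2$ directly). Enlarging $t_0$ once more so that this lower bound exceeds $2$, we pick two distinct elements $h_1 < h_2$ of $B$ from that interval; they satisfy $0 < h_2 - h_1 \leq e^{3t}$, and by the definition of $B$ both $L(s,\chi^{h_1})$ and $L(s,\chi^{h_2})$ have a zero in the region \cref{eq:largerregion}, as required. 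The only delicate point is the simultaneous calibration of the single absolute constant $t_0$ to dominate the principal error, the non-exceptional non-principal error, and the pigeonhole constant for all $d\geq 1$ and $t\in [t_0, 2d]$; the generous choice $X = d^{10^3 d}$ (so that $X^{-t/(100d)} = d^{-10t}$) and the factor $100d$ appearing in the region \cref{eq:largerregion} are precisely what make this uniform comparison comfortable.
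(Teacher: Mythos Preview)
Your proof is correct and follows essentially the same approach as the paper: both split $\sum_{|h|\leq 2H}|\E{\chi^h(\b_1)}|$ into principal, ``good'' non-principal (bounded via \cref{lem:charbound} with $\alpha = 1 - t/(100d)$), and exceptional terms, and compare against the hypothesis. The only difference is organisational: the paper argues by contradiction (assume no two exceptional $h$'s lie within $e^{3t}$ of each other, deduce $|I_2|\ll e^{-3t}H$, and contradict the lower bound), whereas you argue directly (deduce $|B|\gg e^{-2t}H$ and pigeonhole into intervals of length $e^{3t}$); these are contrapositives of one another. Note also that your parenthetical ``degenerate case'' $e^{3t}>4H$ never actually occurs, since $t\leq 2d$ and $H\geq e^{10d}$ force $e^{3t}\leq e^{6d}<H$.
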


\begin{proof}
	As in the proof of \cref{lem:weakboundallchar}, we have
	\begin{equation}
		\label{eq:ineqHd}
		\E{ |F_{\chi,H}(\b_1, \ldots, \b_d)|^2 } = \E{ \bigg\lvert \prod_{i=1}^d \sum_{|h|\leq H} \chi^h(\b_i) \bigg\rvert^2 } \leq (2H+1)^d \Bigg(\sum_{|h|\leq 2H} \abs{\E{\chi^h(\b_1)}}\Bigg)^d.
	\end{equation}
	Let $I_1$ be the set of all $-2H\leq h\leq 2H$ such that $\chi^h$ is principal. Since $\chi$ has order $\geq e^{10d}$, we have
	\begin{equation*}
		|I_1| \ll 1+e^{-10d}H \ll e^{-10d}H.
	\end{equation*}

	Let $I_2$ be the set of all $-2H\leq h\leq 2H$ such that $L(s, \chi^h)$ has a zero in the region \cref{eq:largerregion}. By contradiction, suppose that the conclusion of \cref{lem:strongerboundlargechar} does not hold. Then any sub-interval of $[-2H, 2H]$ of length $e^{3t}$ contains at most one element of $I_2$. This implies that
	\begin{equation*}
		\abs{I_2} \ll 1+e^{-3t}H \ll e^{-3t}H.
	\end{equation*}

	Moreover, for any integer $h\in [-2H, 2H]\setminus (I_1\cup I_2)$, the character $\chi^h$ is non-principal and has no zero in the region \cref{eq:largerregion}, which by \cref{lem:charbound} implies that
	\begin{equation*}
		\E{\chi^h(\b_1)} \ll X^{-t/(100d)}(\log (NX))^3 \ll d^{-10t} d^6  \ll e^{-3t}.
	\end{equation*}

	Therefore, we can bound
	\begin{equation*}
		\sum_{|h|\leq 2H} \abs{\E{\chi^h(\b_1)}} \ll |I_1| + |I_2| + e^{-3t}H \ll e^{-10d}H+e^{-3t}H \ll e^{-3t}H.
	\end{equation*}
	Hence, by \cref{eq:ineqHd} we get
	\begin{equation*}
		\E{ |F_{\chi,H}(\b_1, \ldots, \b_d)|^2 } \leq e^{O(d)}e^{-3td}H^{2d},
	\end{equation*}
	which contradicts the assumption in the statement if $t_0$ is chosen to be sufficiently large.
\end{proof}

We wish to use a zero-density estimate again to show that there are few characters satisfying the conclusion of \cref{lem:strongerboundlargechar}. For this step to work, we need to restrict to the subgroup $\widehat{G}^{\theM}$ of the full character group $\widehat{G}$, where $\theM$ is the integer defined in \cref{lem:defM}.

\begin{lemma}
	\label{lem:largeorderchar}
	Let $t_0\geq 1$ be the constant from \cref{lem:strongerboundlargechar}. Let $N,G,d,X$ and $\b_i$ be as in \cref{lem:weakboundallchar}. Let $H\geq e^{10d}$ and let $\theM\geq 1$ be the integer defined in \cref{lem:defM}.

	For every $t\geq t_0$,
	\begin{equation}
		\label{eq:setlevelt}
		\abs{ \big\{ \chi \in \widehat{G}^{\theM} \ :\ \order(\chi) \geq e^{10d},\ \E{ |F_{\chi,H}(\b_1, \ldots, \b_d)|^2 } > e^{-2td} H^{2d} \big\}} \ll e^{td/2}.
	\end{equation}
\end{lemma}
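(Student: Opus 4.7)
The plan is to combine Lemma \ref{lem:strongerboundlargechar} with a second application of Proposition \ref{prop:ZDEmodN}, using Lemma \ref{lem:hroots} to control how many characters can share a common $k$-th power.

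Let $E = E(t)$ be the set of Dirichlet characters $\psi \spmod N$ such that $L(s,\psi)$ has a zero in $\rect{1 - t/(100d)}{X}$. Proposition \ref{prop:ZDEmodN} applied with $\alpha = 1 - t/(100d)$, $q = N$, $T = X$ gives $|E| \ll (NX)^{(2+\eps)t/(100d)}$. Since $\log N \leq d^2$ and $\log X = 10^3 d \log d = o(d^2)$, one has $\log(NX) \ll d^2$, whence (taking e.g.\ $\eps = 1$)
\[
|E|^2 \leq e^{3td/25}
\]
for $d$ sufficiently large. By Lemma \ref{lem:strongerboundlargechar}, any character $\chi$ appearing on the left-hand side of \cref{eq:setlevelt} satisfies $\chi^{h_1}, \chi^{h_2} \in E$ for some integers $h_1, h_2 \in [-2H, 2H]$ with $k := h_2 - h_1 \in \{1, \ldots, \lfloor e^{3t}\rfloor\}$. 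Setting $\phi := \chi^k = \chi^{h_2}(\chi^{h_1})^{-1}$, we get $\phi \in E \cdot E^{-1}$, so the set in question embeds into
\[
\bigcup_{k=1}^{\lfloor e^{3t}\rfloor}\bigl\{\chi \in \widehat{G}^{\theM} : \chi^k \in E\cdot E^{-1}\bigr\}.
\]

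For each fixed $k$ and each target $\phi \in \widehat{G}^{\theM}$, Lemma \ref{lem:hroots}, transported to $\widehat{G}^{\theM}$ via the isomorphism of Lemma \ref{lem:pontryagin}, implies that at most $k^{d/10}$ characters $\chi \in \widehat{G}^{\theM}$ satisfy $\chi^k = \phi$ (this is the size of the kernel of the $k$-th power map). Since $|E\cdot E^{-1}| \leq |E|^2$, summing yields
\[
\#\{\chi\} \leq \sum_{k=1}^{\lfloor e^{3t}\rfloor} k^{d/10} |E|^2 \ll e^{3t(1 + d/10)} \cdot e^{3td/25} = e^{3t + 21td/50}.
\]
Since $21/50 < 1/2$, this is $\ll e^{td/2}$ once $d$ is large enough; for bounded $d$ the conclusion holds trivially by adjusting the implicit constant, as the whole set is contained in $\widehat{G}$ of size $\leq N = O(1)$.

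The main obstacle is really in the calibration of exponents already established in earlier lemmas: Lemma \ref{lem:strongerboundlargechar} was set up with the region $1-\alpha = t/(100d)$ precisely so that Jutila's bound gives $|E| \leq e^{O(td)}$ with a small enough constant, while Lemma \ref{lem:hroots} (enabled by the judicious choice of $\theM$ in Lemma \ref{lem:defM}) contributes only $k^{d/10} \leq e^{3td/10}$. Once these numerical constants are seen to fit together with $21/50 < 1/2$, the argument is essentially a product-set count over pairs in $E$ together with the fiber bound from Lemma \ref{lem:hroots}.
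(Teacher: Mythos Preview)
Your argument is correct and follows essentially the same route as the paper: bound the exceptional set $E$ via Proposition~\ref{prop:ZDEmodN}, use Lemma~\ref{lem:strongerboundlargechar} to force $\chi^{h_2-h_1}\in E\cdot E^{-1}$, and then apply Lemma~\ref{lem:hroots} to bound the fibres of the $k$-th power map on $\widehat{G}^{\theM}$. The only omission is that Lemma~\ref{lem:strongerboundlargechar} requires $t\leq 2d$; you should note (as the paper does) that for $t>2d$ the bound \cref{eq:setlevelt} is trivial since $e^{td/2}\geq e^{d^2}\geq N\geq |\widehat{G}|$.
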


\begin{proof}
	If $t\geq 2d$, the bound \cref{eq:setlevelt} is trivially satisfied as the right-hand side is $\gg N$. We may thus assume that ${t_0 \leq t\leq 2d}$, in which case \cref{lem:strongerboundlargechar} applies.

	Let $E_t$ be the set of all characters $\psi$ modulo $N$ such that $L(s, \psi)$ has a zero in the rectangle defined in \cref{eq:largerregion}. By \cref{prop:ZDEmodN}, this set $E_t$ has size
	\begin{equation}
		\label{eq:setet}
		\abs{E_t} \ll (NX)^{(2+1/2)t/(100 d)}\ll e^{td/20}.
	\end{equation}

	For every $\chi \in \widehat{G}$ of order $\geq e^{10d}$, if $\E{ |F_{\chi,H}(\b_1, \ldots, \b_d)|^2 } > e^{-2td} H^{2d}$ then by \cref{lem:strongerboundlargechar} we can write
	\begin{equation*}
		\chi^h = \psi_1 \overline{\psi_2}
	\end{equation*}
	for some integer $0< h < e^{3t}$ and some characters $\psi_1, \psi_2\in E_t$. Hence, the left-hand side of \cref{eq:setlevelt} is
	\begin{equation}
		\label{eq:lo2}
		\leq \sum_{\psi_1,\psi_2\in E_t} \sum_{0 < h < e^{3t}} \abs{\big\{ \chi\in \widehat{G}^M \ : \ \chi^h = \psi_1\overline{\psi_2}\big\}}.
	\end{equation}
	Since $\widehat{G}^{\theM}$ and $\widehat{G^\theM}$ are isomorphic, \cref{lem:hroots} implies that
	\begin{equation}
		\label{eq:lo3}
		\abs{\big\{ \chi\in \widehat{G}^\theM \ : \ \chi^h = \psi_1\overline{\psi_2}\big\}} \leq h^{d/10}\leq e^{3td/10}.
	\end{equation}

	Inserting \cref{eq:setet,eq:lo3} into \cref{eq:lo2}, we conclude that the left-hand side of \cref{eq:setlevelt} is
	\begin{equation*}
		\leq |E_t|^2 e^{3t} e^{3td/10} \leq e^{3t} e^{4td/10} \ll e^{td/2}
	\end{equation*}
	as claimed.
\end{proof}

Combining the previous lemmas, we obtain a suitable bound for the sum of second moments of $F_{\chi,H}(\b_1, \ldots, \b_d)$ over all large-order characters in $\widehat{G}^{\theM}$.

\begin{proposition}[Large-order characters]
	\label{lem:alllargeorderchar}
	Let $N,G,d,X$ and $\b_i$ be as in \cref{lem:weakboundallchar}. Let $\theM\geq 1$ be the integer defined in \cref{lem:defM}. For $H\geq e^{10d}$, we have
	\begin{equation*}
		\sum_{\substack{\chi\in \widehat{G}^{\theM}\\ \order(\chi) \geq e^{10d}}} \E{ |F_{\chi,H}(\b_1, \ldots, \b_d)|^2}^{1/2} \ll d^{-2d} H^d.
	\end{equation*}
\end{proposition}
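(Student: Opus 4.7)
The plan is to combine the pointwise $L^2$ bound of \cref{lem:weakboundallchar} with the level-set count of \cref{lem:largeorderchar} via a distribution function (layer-cake) argument. Writing $S$ for the left-hand side and
$$N(u) := \bigl\lvert \{\chi \in \widehat{G}^{\theM} : \order(\chi) \geq e^{10d},\ \E{|F_{\chi,H}(\b_1, \ldots, \b_d)|^2}^{1/2} > u\} \bigr\rvert,$$
the layer-cake identity gives $S = \int_0^{\infty} N(u)\, du$. Substituting $u = e^{-td}H^d$ (so that $du = -d e^{-td}H^d\, dt$) transforms this into
$$S = dH^d \int_{-\infty}^{\infty} N(e^{-td}H^d)\, e^{-td}\, dt.$$

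The two earlier lemmas control this integrand from complementary sides. \cref{lem:weakboundallchar} supplies the uniform bound $\E{|F_{\chi,H}(\b_1, \ldots, \b_d)|^2}^{1/2} \leq C d^{-5d} H^d$ valid for every character of order $\geq e^{10d}$, which forces $N(e^{-td}H^d)$ to be supported on $t \geq 5\log d - O(1)$. \cref{lem:largeorderchar} provides the tail estimate $N(e^{-td}H^d) \ll e^{td/2}$ as soon as $t \geq t_0$. I would assume $N$ is large enough that $5\log d \geq t_0$; the remaining bounded-$d$ case concerns only finitely many $N$ (since $d = \lceil\!\sqrt{\log N}\rceil$ forces $N$ itself to be bounded) and is absorbed into the implied constant.

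Under this hypothesis both estimates apply throughout the support of the integrand, so
$$S \ll dH^d \int_{5\log d - O(1)}^{\infty} e^{td/2} e^{-td}\, dt = dH^d \int_{5\log d - O(1)}^{\infty} e^{-td/2}\, dt \ll H^d d^{-5d/2}.$$
Since $d^{-5d/2} \leq d^{-2d}$ for all $d \geq 1$, this is $\ll H^d d^{-2d}$, as claimed.

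The main obstacle is conceptual rather than technical: one must recognise that \cref{lem:weakboundallchar} and \cref{lem:largeorderchar} play complementary roles (pinning down the $L^{\infty}$ scale of the summands and the tail distribution of their values, respectively), and that the layer-cake identity is the right device to combine them. With this observation in place the calculation itself is a routine integration, and the slack in the exponent ($5/2$ versus $2$) shows that the final bound is in no way tight.
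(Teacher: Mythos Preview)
Your proof is correct and follows essentially the same approach as the paper: both combine the uniform $L^2$ bound of \cref{lem:weakboundallchar} with the level-set estimate of \cref{lem:largeorderchar} via a layer-cake decomposition, the only cosmetic difference being that you use the continuous integral form while the paper sums over discrete levels $e^{-m}H^d$ indexed by integers $m\geq m_0 \asymp 5d\log d$. The handling of the small-$d$ case (absorbing finitely many $N$ into the implied constant) and the final arithmetic are also the same in spirit.
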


\begin{proof}
	By \cref{lem:weakboundallchar}, we know that $\E{ |F_{\chi,H}(\underline{\b})|^2}^{1/2} \leq C d^{-5d} H^d$ for all characters $\chi\in \widehat{G}$ of order $\geq e^{10d}$, where $C>0$ is an absolute constant (as before, $\underline{\b}$ stands for $\b_1, \ldots, \b_d$). Therefore,
	\begin{equation*}
		\sum_{\substack{\chi\in \widehat{G}^{\theM}\\ \order(\chi) \geq e^{10d}}} \E{ |F_{\chi,H}(\underline{\b})|^2}^{1/2} \leq \sum_{m = m_0}^{+\infty}\sum_{\substack{\chi\in \widehat{G}^{\theM}\\ \order(\chi) \geq e^{10d}}} e^{-m+1}H^d \ind{e^{-m} < \E{ |F_{\chi,H}(\underline{\b})|^2}^{1/2} H^{-d} \leq e^{-m+1}}(\chi)
	\end{equation*}
	where $m_0 := \lfloor 5d\log d - \log C + 1\rfloor$. We may assume that $m_0 \geq \max(t_0d, 4d\log d)$, since otherwise $d\ll 1$ and \cref{lem:alllargeorderchar} is trivially satisfied (given that $|\widehat{G}^\theM| \leq N \ll 1$ when $d\ll 1$).

	Applying \cref{lem:largeorderchar} with $t := m/d$, we obtain that for every $m \geq t_0d$,
	\begin{equation*}
		\abs{ \left\{\chi\in \widehat{G}^\theM \ :\  \order(\chi) \geq e^{10d},\ \E{ |F_{\chi,H}(\underline{\b})|^2}^{1/2} > e^{-m}H^{d}\right\} } \ll e^{m/2}.
	\end{equation*}
	Therefore,
	\begin{equation*}
		\sum_{\substack{\chi\in \widehat{G}^\theM\\ \order(\chi) \geq e^{10d}}} \E{ |F_{\chi,H}(\underline{\b})|^2}^{1/2} \ll \sum_{m = m_0}^{+\infty} e^{m/2} e^{-m+1} H^d \ll e^{-m_0/2} H^d
	\end{equation*}
	which is $O(d^{-2d} H^d)$ as $m_0 \geq 4d\log d$.
\end{proof}

\subsection{Geometry of numbers}
\label{sec:geometrynumbers}

In this section, we show how to pass from good estimates on the number of lattice points in certain regions to the existence of a short basis for the lattice.

\begin{lemma}
	\label{lem:cubes}
	Let $L\geq 1$ be an integer. Cover the cube $[-L,L]^d$ by $(2L)^d$ cubes of side length $1$ in the obvious way. Label these unit cubes $C_1, \ldots, C_{(2L)^d}$ (in any order). Let $V\subset \R^d$ be a hyperplane through the origin. Then the number of unit cubes $C_i$ intersecting $V$ is at most $(d+1) (2L)^{d-1}$.
\end{lemma}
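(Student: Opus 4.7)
The plan is to reduce the count to a one-dimensional slicing problem by projecting along a coordinate direction in which the hyperplane $V$ is ``steepest'', and to count cubes column by column along that direction.

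First, I would fix a nonzero normal vector $\n = (n_1, \ldots, n_d)$ to $V$ and, after permuting coordinates (which does not change the cube count), assume that $|n_d| = \max_j |n_j|$. This guarantees that the defining equation of $V$, rewritten as $x_d = -n_d^{-1}\sum_{j=1}^{d-1} n_j x_j$, expresses $x_d$ as a linear function of $(x_1, \ldots, x_{d-1})$ whose coefficients all have absolute value $\leq 1$.

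Next, I would partition the $(2L)^d$ unit cubes into $(2L)^{d-1}$ columns parallel to the $x_d$-axis, where each column is indexed by $(i_1, \ldots, i_{d-1}) \in \{-L, -L+1, \ldots, L-1\}^{d-1}$ and consists of the $2L$ cubes $\prod_{j=1}^{d-1}[i_j, i_j+1] \times [k, k+1]$ with $k = -L, \ldots, L-1$. Fixing one such column, as $(x_1, \ldots, x_{d-1})$ ranges over $\prod_{j=1}^{d-1}[i_j, i_j+1]$, the value of $x_d$ forced by $\x \in V$ varies over an interval of length at most $|n_d|^{-1}\sum_{j=1}^{d-1}|n_j| \leq d-1$, thanks to the maximality of $|n_d|$. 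Since the number of integers $k$ for which the unit interval $[k, k+1]$ meets an interval of length $\ell$ is at most $\ell + 2$, at most $(d-1) + 2 = d+1$ cubes in the column intersect $V$. Summing over the $(2L)^{d-1}$ columns yields the desired bound $(d+1)(2L)^{d-1}$.

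There is no substantive obstacle here: the argument is a routine projection/packing estimate. The only point requiring any care is the choice of the steepest coordinate direction, which is what turns the arbitrary linear functional $\n \cdot \x$ into a Lipschitz function of $(x_1, \ldots, x_{d-1})$ with $\ell^\infty$-norm of the gradient bounded by $1$, and hence bounds the total length of the ``slice interval'' by $d-1$.
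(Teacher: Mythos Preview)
Your proof is correct and follows essentially the same approach as the paper: choose the coordinate direction in which the normal has largest component, group the unit cubes into $(2L)^{d-1}$ columns along that direction, and show that at most $d+1$ cubes in each column meet $V$. The only cosmetic difference is that the paper normalises the normal vector in $\ell^2$ (so the largest coordinate is $\geq 1/\sqrt{d}$) and bounds the gap $k$ between two intersected cubes in a column via an inner-product estimate, whereas you normalise in $\ell^\infty$ and read off the bound from the Lipschitz graph representation of $V$; both routes yield the same $d+1$ per column.
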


\begin{proof}
	Let $e_1, \ldots, e_d$ be the standard basis for $\R^d$. Let $v\in \R^d$ be a unit vector orthogonal to $V$. Without loss of generality, since $\norm{v}_2 = 1$, we may assume that $\inner{v, e_1} \geq 1/\sqrt{d}$.

	Suppose that $V$ intersects two unit cubes $C_i$ and $C_j$ where $C_i = C_j + ke_1$ for some integer $k \geq 0$. Then, there is some $p\in C_i$ and some $w\in \R^d$ with $\norm{w}_{\infty} \leq 1$ such that $p\in V$ and $p+ke_1+w\in V$. Therefore, $ke_1+w \in V$ and thus $\inner{ke_1+w,v} = 0$. Noting that
	\begin{equation*}
		\inner{ke_1+w, v} \geq k \inner{e_1, v} - \norm{w}_{2} \norm{v}_{2} \geq \frac{k}{\sqrt{d}} - \sqrt{d},
	\end{equation*}
	we deduce that $k\leq d$.

	We have thus proved that, for any cube $C_i$, there are at most $d+1$ cubes of the form $C_i+ke_1$ for some $k\in \Z$ which intersect $V$. The lemma follows.
\end{proof}

The next lemma allows us to convert information about the number of lattice points in cubes into the existence of short linearly independent lattice vectors.

\begin{lemma}
	\label{lem:shortLIvectors}
	Let $d \geq 1$. Let $\Lambda \subset \R^d$ be a full-rank lattice. Let $2\leq H_0 < H_1$ be real numbers such that $H_1/H_0$ is an integer. Suppose that, for $i\in \{0, 1\}$,
	\begin{equation}
		\label{eq:countpoints}
		\abs{\Lambda \cap [-H_i, H_i]^d} = \theta_i \frac{(2H_i+1)^d}{\vol(\R^d/\Lambda)}
	\end{equation}
	where $\theta_0, \theta_1>0$ satisfy $\frac{H_1}{H_0} > \frac{\theta_0}{\theta_1} d \big(\tfrac{5}{2}\big)^d$. Then $\Lambda\cap [-H_1, H_1]^d$ contains $d$ linearly independent vectors.
\end{lemma}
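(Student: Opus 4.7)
The plan is to argue by contradiction: suppose that $\Lambda \cap [-H_1, H_1]^d$ spans a proper subspace of $\R^d$, so that all these lattice points lie in some hyperplane $V$ through the origin.

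First, I would partition the cube $[-H_1, H_1]^d$ into $(2H_1/H_0)^d$ axis-aligned sub-cubes of side length $H_0$, each a translate of the symmetric cube $C_0 := [-H_0/2, H_0/2]^d$. After rescaling by $1/H_0$ (which sends the decomposition to the unit-cube cover of $[-H_1/H_0, H_1/H_0]^d$), \cref{lem:cubes} with $L = H_1/H_0$ yields that at most $(d+1)(2H_1/H_0)^{d-1}$ of these sub-cubes can intersect $V$.

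Next, I would bound the number of lattice points in a single sub-cube by a standard symmetrization argument: for any translate $v + C_0$, fix a point $p \in \Lambda \cap (v + C_0)$ (if one exists); then the map $q \mapsto p - q$ injects $\Lambda \cap (v + C_0)$ into $\Lambda \cap (C_0 - C_0) = \Lambda \cap [-H_0, H_0]^d$. Hence $|\Lambda \cap (v + C_0)| \leq |\Lambda \cap [-H_0, H_0]^d|$ for every translate.

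Combining these two bounds with the contradiction hypothesis (every lattice point in $[-H_1, H_1]^d$ must lie in some sub-cube meeting $V$) gives
\begin{equation*}
    |\Lambda \cap [-H_1, H_1]^d| \leq (d+1)(2H_1/H_0)^{d-1} \cdot |\Lambda \cap [-H_0, H_0]^d|.
\end{equation*}
Substituting the counts from~\eqref{eq:countpoints} cancels the covolumes, and using $H_0 \geq 2$ to bound $(2H_0+1)/(2H_0) \leq 5/4$, the resulting inequality simplifies to roughly $H_1/H_0 \leq \tfrac{\theta_0}{\theta_1} \cdot \tfrac{d+1}{2}\,(5/2)^d$, contradicting the assumed lower bound on $H_1/H_0$ for $d \geq 2$ (the $d=1$ case is immediate from the point counts themselves). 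The only real care is in tracking constants: the factor $5/2$ decomposes as $(5/4)\cdot 2$, where the $2$ arises from symmetrizing a sub-cube and the $5/4$ from comparing $2H_0+1$ with $2H_0$; this is exactly what dictates the choice of sub-cube side length $H_0$ (rather than, say, $2H_0$), and is the main place where one has to be a bit careful.
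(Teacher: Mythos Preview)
Your proof is correct and follows essentially the same approach as the paper: contradiction via a hyperplane $V$, partitioning $[-H_1,H_1]^d$ into side-$H_0$ sub-cubes, invoking \cref{lem:cubes} (after rescaling) to bound the number of sub-cubes meeting $V$, and bounding the lattice points in any translate by $|\Lambda\cap[-H_0,H_0]^d|$. The only cosmetic differences are that the paper phrases the per-cube bound as ``$C\subset v+[-H_0,H_0]^d$ for any $v\in\Lambda\cap C$'' rather than via your difference map $q\mapsto p-q$, and handles the final arithmetic with $2H_1+1\ge 2LH_0$ and $d+1\le 2d$ directly (so no separate $d=1$ case is needed, since $(d+1)/2\le d$ for all $d\ge1$).
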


\begin{proof}
	By contradiction, suppose that there exists a linear hyperplane $V$ containing all the points $v\in \Lambda \cap [-H_1, H_1]^d$.

	Let $L = H_1/H_0$. The large cube $[-H_1, H_1]^d$ can be covered by $(2L)^d$ axis-parallel cubes of side length $H_0$ in the natural way. By \cref{lem:cubes}, we can bound
	\begin{equation*}
		\abs{\Lambda \cap [-H_1, H_1]^d} \leq (d+1) (2L)^{d-1} \sup_C~\abs{\Lambda \cap C},
	\end{equation*}
	where the supremum runs over all (not necessarily centred) axis-parallel cubes $C\subset \R^d$ of side length $H_0$. If $C$ is such a cube and $v\in \Lambda\cap C$, then $C \subset v + [-H_0, H_0]^d$, which implies that
	\begin{equation*}
		\abs{\Lambda \cap C} \leq \abs{\Lambda \cap \big(v + [-H_0, H_0]^d\big)} = \abs{\Lambda \cap [-H_0, H_0]^d}.
	\end{equation*}
	Thus,
	\begin{equation*}
		\abs{\Lambda \cap [-H_1, H_1]^d} \leq (d+1) (2L)^{d-1}\abs{\Lambda \cap [-H_0, H_0]^d}.
	\end{equation*}
	Plugging in our lattice point estimate \cref{eq:countpoints}, we get
	\begin{equation*}
		\theta_1 (2H_1+1)^{d} \leq (d+1) (2L)^{d-1} \theta_0 (2H_0+1)^{d}.
	\end{equation*}
	Using $2H_1+1 \geq 2LH_0$ and $d+1 \leq 2d$, this implies that $L \leq \frac{\theta_0}{\theta_1} d \big(2+\frac{1}{H_0}\big)^d$, contradicting the inequality in the statement of the lemma.
\end{proof}

The linearly independent vectors given by \cref{lem:shortLIvectors} can be upgraded to a genuine basis for $\Lambda$ by standard geometry of numbers, namely Mahler's theorem. We state this fact in a slightly more general situation.

\begin{lemma}
	\label{lem:Mahler}
	Let $d\geq 1$. Let $\Lambda_1, \Lambda_2 \subset \R^d$ be full-rank lattices such that $M \Lambda_1 \subset \Lambda_2$ for some integer $M\geq 1$. Suppose that $\Lambda_1$ contains $d$ linearly independent vectors in $[-H, H]^d$ for some $H>0$. Then, $\Lambda_2$ admits a basis where each basis vector has Euclidean norm $\leq d^{3/2}MH$.
\end{lemma}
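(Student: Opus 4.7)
The plan is to convert the hypothesis into an upper bound on the $d$-th successive minimum of $\Lambda_2$ (with respect to the Euclidean ball), and then quote the standard basis-selection theorem from the geometry of numbers, usually attributed to Mahler.

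First I would let $v_1, \ldots, v_d \in \Lambda_1 \cap [-H,H]^d$ be the linearly independent vectors given by the hypothesis. Since each coordinate of $v_i$ is at most $H$ in absolute value, we have $\|v_i\|_2 \leq \sqrt{d}\,H$. The assumption $M\Lambda_1 \subset \Lambda_2$ then produces $d$ linearly independent lattice vectors $Mv_1, \ldots, Mv_d \in \Lambda_2$, each of Euclidean norm at most $\sqrt{d}\,MH$.

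Next I would translate this into a bound on the successive minima $\lambda_1(\Lambda_2) \leq \cdots \leq \lambda_d(\Lambda_2)$ of $\Lambda_2$ measured with respect to the unit Euclidean ball: the existence of $d$ linearly independent vectors of norm at most $\sqrt{d}\,MH$ forces $\lambda_d(\Lambda_2) \leq \sqrt{d}\,MH$. I would then invoke Mahler's theorem, which states that any full-rank lattice $\Lambda \subset \R^d$ admits a basis $b_1, \ldots, b_d$ satisfying $\|b_i\|_2 \leq \max(1, i/2)\,\lambda_i(\Lambda)$ for each $i$. Applying this to $\Lambda_2$ gives a basis with $\|b_i\|_2 \leq (d/2)\lambda_d(\Lambda_2) \leq (d/2)\sqrt{d}\,MH \leq d^{3/2}MH$, which is the desired bound.

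The argument is essentially routine given the right classical tool; the only real ``obstacle'' is choosing the correct Mahler-type statement, since one might naively try to take the $Mv_i$'s themselves as a basis, but of course they need not generate $\Lambda_2$ even when they are linearly independent in it (indeed they need not even generate all of $M\Lambda_1$ once $M\Lambda_1 \subsetneq \Lambda_2$). Passing through the successive minima and Mahler's theorem is precisely what handles this index-defect cleanly, at the cost of a factor of $d/2$, which is absorbed into the $d^{3/2}$ in the conclusion.
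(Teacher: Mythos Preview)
Your proposal is correct and follows essentially the same route as the paper: pass from the $d$ independent vectors in $\Lambda_1\cap[-H,H]^d$ to $d$ independent vectors $Mv_i\in\Lambda_2$ of Euclidean norm at most $\sqrt{d}\,MH$, deduce $\lambda_d(\Lambda_2)\leq\sqrt{d}\,MH$, and then apply Mahler's theorem to obtain a basis of norm at most $d\lambda_d\leq d^{3/2}MH$. The only cosmetic difference is that you quote the sharper $\max(1,i/2)\lambda_i$ form of Mahler's bound while the paper uses the coarser $d\lambda_d$ form from Tao--Vu, but both yield the stated conclusion.
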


\begin{proof}
	By assumption, there are linearly independent vectors $v_1, \ldots, v_d \in \Lambda_1 \cap [-H, H]^d$. Then $Mv_1, \ldots, Mv_d$ are linearly independent vectors of $\Lambda_2$ such that $\max_{i\in [d]} \norm{Mv_i}_2 \leq \sqrt{d} MH$.

	Let $B\subset \R^d$ be the unit ball for the Euclidean norm. Let~${0 < \lambda_1 \leq \cdots \leq \lambda_d}$ be the successive minima\footnote{See \cite[Definition~3.29]{taovu} for the definition of successive minima.} of $B$ with respect to $\Lambda_2$. Since $Mv_1, \ldots, Mv_d \in \Lambda_2$ are linearly independent, we deduce from the above that $\lambda_d \leq \sqrt{d} MH$.

	By Mahler's theorem (see \cite[Theorem~3.34]{taovu}), we conclude that $\Lambda_2$ admits a basis of vectors of Euclidean norm at most $d \lambda_d \leq d^{3/2}MH$, which is what we needed to show.
\end{proof}

\subsection{Short basis vectors}

We can now prove our main technical result, which may be of independent interest.

\begin{theorem}
	\label{thm:shortbasis}
	Let $N>2$ be an integer. Let $d := \lceil\!\sqrt{\log N} \rceil$ and $X := d^{10^3d}$.

	Let $\b_1, \ldots, \b_d$ be i.i.d.~random variables, each uniformly distributed in the set of primes $\leq X$ not dividing $N$. Let $r\geq 0$ and let $\x_1, \ldots, \x_r$ be arbitrary\footnote{In particular, the $\x_i$ are not assumed to be independent or identically distributed.} random variables taking values in $\mgp{N}$.

	Then, with probability $1 +O\big(d^{-d}\big)$, the lattice
	\begin{equation*}
		\L := \Big\{ (e_1, \ldots, e_d, f_1, \ldots, f_r) \in \Z^{d+r} \ : \ \prod_{i=1}^d \b_i^{e_i} \prod_{i=1}^r \x_i^{f_i} \equiv 1 \!\!\pmod{N} \Big\}
	\end{equation*}
	has a basis consisting of vectors of Euclidean norm $\ll e^{42(d+r)}$.
\end{theorem}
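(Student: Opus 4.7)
The plan is to apply the geometry-of-numbers tools from \cref{sec:geometrynumbers} to an auxiliary lattice and then pull back to $\L$ via Mahler's theorem. I would first define
\begin{equation*}
    \L_\theM := \Big\{ (e_1, \ldots, e_d, f_1, \ldots, f_r) \in \Z^{d+r} \ :\ \prod_{i=1}^d \b_i^{\theM e_i} \prod_{j=1}^r \x_j^{\theM f_j} \equiv 1 \!\!\pmod{N} \Big\},
\end{equation*}
so that $\L \subset \L_\theM$ and $\theM \L_\theM \subset \L$. Since $\theM \leq e^{10d}$ by \cref{lem:defM}, \cref{lem:Mahler} reduces the theorem to exhibiting, with probability $1 - O(d^{-d})$, a system of $d+r$ linearly independent vectors of $\L_\theM$ in $[-H_1, H_1]^{d+r}$ for some $H_1 \leq e^{c(d+r)}$ with $c$ absolute.

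To feed \cref{lem:shortLIvectors} in dimension $d+r$, I need accurate counts of $|\L_\theM \cap [-H, H]^{d+r}|$ at two scales $H_0 < H_1$. The proof of \cref{lem:countbychar} uses only orthogonality of characters in $G^\theM$ and extends verbatim to $d+r$ generators:
\begin{equation*}
    |\L_\theM \cap [-H, H]^{d+r}| = \frac{1}{\big\lvert \widehat{G}^\theM \big\rvert} \sum_{\chi \in \widehat{G}^\theM} F_{\chi, H}(\b_1, \ldots, \b_d, \x_1, \ldots, \x_r),
\end{equation*}
with the obvious $(d+r)$-variable extension of \cref{def:Fchi}. Splitting by the order of $\chi$, the characters of order $\leq e^{10d}$ yield the expected main term $(2H+1)^{d+r}/\big\lvert \inner{\b_1^\theM, \ldots, \x_r^\theM} \big\rvert$ up to a factor $1 + O\bigl(e^{O(d+r)}/H\bigr)$ by the direct $(d+r)$-variable analogue of \cref{lem:smallorderchar}. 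For the large-order part, the trivial bound $|\sum_{|h|\leq H} \chi^h(x)| \leq 2H+1$ gives, pointwise in every realisation,
\begin{equation*}
    |F_{\chi, H}(\underline{\b}, \underline{\x})| \leq (2H+1)^r \, |F_{\chi, H}(\underline{\b})|,
\end{equation*}
which is crucial in that it allows the $\x_j$ to depend arbitrarily on the $\b_i$. Taking $L^2$ norms and summing, \cref{lem:alllargeorderchar} bounds the expected $L^1$ contribution of large-order characters by $\ll d^{-2d} H^{d+r}/\big\lvert \widehat{G}^\theM \big\rvert$, and since $\big\lvert \widehat{G}^\theM \big\rvert \geq \big\lvert \inner{\b_1^\theM, \ldots, \x_r^\theM} \big\rvert$, Markov's inequality then yields, with failure probability $O(d^{-d})$, a relative large-order error of size $O(d^{-d})$.

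Choosing $H_0 := e^{c'(d+r)}$ with $c'$ sufficiently large and $H_1 := H_0 L$ where $L \in \Z$ has size $e^{O(d+r)}$, large enough to satisfy the ratio condition of \cref{lem:shortLIvectors}, a union bound over the two scales preserves the $O(d^{-d})$ failure probability. \cref{lem:shortLIvectors} then produces $d+r$ linearly independent vectors in $\L_\theM \cap [-H_1, H_1]^{d+r}$, and \cref{lem:Mahler} converts these into a basis of $\L$ with each vector of Euclidean norm at most $(d+r)^{3/2} \theM \sqrt{d+r}\, H_1 \leq e^{42(d+r)}$ after tuning constants. The hard part will be the large-order tail: one must verify that the pointwise factor $(2H+1)^r$ lost in the trivial bound is exactly compensated by the dimensional growth of the main term, so that the savings $d^{-2d}$ from \cref{lem:alllargeorderchar} survive the Markov step even under arbitrary coupling between $\underline{\b}$ and $\underline{\x}$.
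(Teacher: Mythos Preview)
Your proposal is correct and follows essentially the same route as the paper: pass to the auxiliary lattice $\L_\theM$, combine \cref{lem:countbychar,lem:smallorderchar} in dimension $d+r$, absorb the unknown $\x_j$'s via the pointwise bound $|F_{\chi,H}(\underline{\b},\underline{\x})|\leq(2H+1)^r|F_{\chi,H}(\underline{\b})|$, invoke \cref{lem:alllargeorderchar}, apply a tail inequality at two scales, and finish with \cref{lem:shortLIvectors,lem:Mahler}. The only cosmetic difference is that the paper uses the $L^2$ triangle inequality together with Chebyshev (yielding failure probability $O(d^{-2d})$) where you use Markov on the $L^1$ norm (yielding $O(d^{-d})$); both suffice.
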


\begin{remark}
	\label{rem:quarantedeux}
	The constant $42$ in the exponent is by no means the limit our techniques. For example, using smooth cutoffs in \cref{def:Fchi} would significantly reduce the error term in \cref{lem:smallorderchar} and hence lower this constant. We have not performed such optimisations to keep the paper as simple as possible.
\end{remark}

\begin{proof}[Proof of \cref{thm:shortbasis}]
	Let $M := \theM(N)$ be the integer defined in \cref{lem:defM}. We introduce the auxiliary lattice
	\begin{equation*}
		\L_{M} :=  \Big\{ (e_1, \ldots, e_d, f_1, \ldots, f_r) \in \Z^{d+r} \ : \ \prod_{i=1}^d \b_i^{M e_i} \prod_{i=1}^r \x_i^{M f_i} \equiv 1 \!\!\pmod{N} \Big\}.
	\end{equation*}
	By \cref{lem:countbychar,lem:smallorderchar}, we have the lattice point estimate
	\begin{equation}
		\label{eq:latptest}
		\abs{\L_{M} \cap [-H, H]^{d+r}} = \frac{  \left(1 + O\left({e^{31(d+r)}}{H^{-1}}\right)\right) (2H+1)^{d+r} }{ \big\lvert\inner{ \b_1^{M}, \ldots, \b_d^{M}, \x_1^{M}, \ldots, \x_r^M }\big\rvert } + \frac{1}{\big\lvert \widehat{G}^{M}\big\rvert} \!\!\!\!\sum_{\substack{\chi \in \widehat{G}^M\\ \order(\chi) \geq e^{10d}}} \!\!\!\! \abs{F_{\chi,H}(\underline{\b}, \underline{\x})}
	\end{equation}
	for every integer $H\geq e^{31(d+r)}$, where
	\begin{equation*}
		F_{\chi,H}(\underline{\b}, \underline{\x}) := F_{\chi, H}(\b_1, \ldots, \b_d, \x_1, \ldots, \x_r) = \Bigg(\prod_{i=1}^d \sum_{|h|\leq H} \chi^h(\b_i)\Bigg) \Bigg(\prod_{i=1}^r \sum_{|h|\leq H} \chi^h(\x_i)\Bigg).
	\end{equation*}
	Since the $\x_i$ have unknown distributions, we will use the trivial bound
	\begin{equation*}
		\abs{F_{\chi,H}(\underline{\b},\underline{\x})} \leq (2H+1)^r \abs{F_{\chi,H}(\b_1, \ldots, \b_d)}.
	\end{equation*}
	Applying \cref{lem:alllargeorderchar}, we deduce that, for all $H\geq e^{10d}$,
	\begin{equation*}
		\sum_{\substack{\chi\in \widehat{G}^{\theM}\\ \order(\chi) \geq e^{10d}}} \E{ |F_{\chi,H}(\underline{\b}, \underline{\x})|^2}^{1/2} \ll d^{-2d} (2H+1)^{d+r}.
	\end{equation*}
	By the $L^2$ triangle inequality and Chebyshev's inequality, this implies that, for fixed $H\geq e^{10d}$,
	\begin{equation}
		\label{eq:proberterm}
		\Prbis\bigg({\sum_{\substack{\chi \in \widehat{G}^M\\ \order(\chi) \geq e^{10d}}} \!\!\!\! \abs{F_{\chi,H}(\underline{\b}, \underline{\x})}} > d^{-d}(2H+1)^{d+r}\bigg) \ll d^{-2d}.
	\end{equation}

	Let $H_0 := d \lceil e^{31(d+r)}\rceil$ and $H_1 := \lceil d (d+r) (5/2)^{d+r}\rceil H_0$. We apply \cref{eq:latptest,eq:proberterm} twice, once with $H = H_0$ and once with $H = H_1$ to obtain the following: with probability ${1+O\big(d^{-d}\big)}$, the two estimates
	\begin{equation*}
		\abs{\L_{M} \cap [-H_0, H_0]^{d+r}} = \left(1 + O(1/d)\right) \frac{(2H_0+1)^{d+r}}{ \big\lvert\inner{ \b_1^{M}, \ldots, \b_d^{M}, \x_1^{M}, \ldots, \x_r^M }\big\rvert}
	\end{equation*}
	and
	\begin{equation*}
		\abs{\L_{M} \cap [-H_1, H_1]^{d+r}} = \left(1 + O(1/d)\right) \frac{(2H_1+1)^{d+r}}{ \big\lvert\inner{ \b_1^{M}, \ldots, \b_d^{M}, \x_1^{M}, \ldots, \x_r^M }\big\rvert}
	\end{equation*}
	simultaneously hold.

	We now apply \cref{lem:shortLIvectors}. By our choice of $H_0$ and $H_1$, the inequality in the statement is satisfied provided that $d$ is sufficiently large. We can assume that $d$ is large enough as, for $d\ll 1$, we have $N \ll 1$ and \cref{thm:shortbasis} is trivially true. Thus, \cref{lem:shortLIvectors} implies that ${\L_M \cap [-H_1, H_1]^{d+r}}$ contains $d+r$ linearly independent vectors, with probability ${1+O\big(d^{-d}\big)}$. By \cref{lem:Mahler}, since $M\L \subset \L_M$, we conclude that, with probability ${1+O\big(d^{-d}\big)}$, $\L$ admits a basis of vectors of Euclidean norm at most
	\begin{equation*}
		\ll d^{3/2} M H_1 \ll d^{3/2} e^{10d} d^2 (d+r) (5/2)^{d+r} e^{31(d+r)} \ll e^{42(d+r)}
	\end{equation*}
	using the bound $M \leq e^{10d}$ given by \cref{lem:defM}. This completes the proof.
\end{proof}

\section{Applications to quantum computing}
\label{sec:proofsquantum}

In this section, we prove the correctness of efficient quantum algorithms for factoring and for the discrete logarithm problem by applying our version of Regev's number-theoretic conjecture, \cref{thm:shortbasis}.

\subsection{Preparatory lemmas}

\begin{lemma}
	\label{lem:classicalmult}
	Let $N, d, m \geq 2$ be integers. There is a classical algorithm that, given integers ${0\leq a_1, \ldots, a_d\leq 2^{m}}$ and exponents $t_1, \ldots, t_d \in \{0, 1\}$, computes the product
	\begin{equation*}
		\prod_{i=1}^d a_i^{t_i} \spmod{N}
	\end{equation*}
	in time $O\big(md (\log d) \log(md)\big)$.
\end{lemma}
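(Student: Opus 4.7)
I would prove this lemma by a straightforward divide-and-conquer (balanced binary tree) computation, using fast integer multiplication for the individual operations.

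The plan is as follows. First, without loss of generality discard the $a_i$ for which $t_i = 0$, so that the task reduces to computing $P := a_{i_1} a_{i_2} \cdots a_{i_s} \bmod N$ for some $s \leq d$ with each factor at most $2^m$. I would then organise the multiplications as a balanced binary tree of depth $\lceil \log_2 d \rceil$: at level $k$ (counted from the leaves, with the $a_{i_j}$ at level $0$), each internal node holds the product of the $2^k$ integers in its subtree, so it stores a nonnegative integer of at most $2^k m$ bits, and the number of nodes at level $k$ is at most $d/2^k$.

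To pass from level $k$ to level $k+1$, one performs at most $d/2^{k+1}$ multiplications, each between two integers of at most $2^k m$ bits. Using the Harvey--van der Hoeven fast integer multiplication algorithm~\cite{annalsmult}, multiplying two $L$-bit integers takes time $O(L \log L)$, so each multiplication at this level costs $O(2^k m \log(2^k m)) = O(2^k m \log(md))$. Summing over the nodes at level $k+1$ gives a total cost of $O(dm \log(md))$ per level, and summing the $O(\log d)$ levels yields the claimed bound $O(md (\log d) \log(md))$. The root of the tree holds $\prod_{j} a_{i_j}$ as an integer of at most $md$ bits, and a single reduction modulo $N$ at the end (by fast division, costing $O(md \log(md))$) produces the desired residue; this final cost is absorbed into the main bound.

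There is essentially no obstacle: the only point requiring a word of care is that I do \emph{not} reduce modulo $N$ at the internal nodes (since $N$ could be very large compared to the partial products, or the cost of reduction could dominate at lower levels); instead I let the intermediate products grow, and the bookkeeping above shows that the total bit-complexity is still $O(md (\log d)\log(md))$ thanks to the geometric size doubling matching the geometric decrease in the number of nodes per level.
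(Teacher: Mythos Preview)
Your proposal is correct and matches the paper's proof essentially step for step: both organise the multiplication as a balanced binary tree, invoke the Harvey--van der Hoeven $O(L\log L)$ multiplication bound at each level, and sum $O(dm\log(md))$ over $O(\log d)$ levels. Your explicit remark about deferring the reduction modulo $N$ to the end is a useful clarification that the paper leaves implicit.
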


\begin{proof}
	This is similar to \cite[p5]{regev} or \cite[Lemma~12]{MIT} but for  general values of $m$. Without loss of generality, assume that $d$ is a power of $2$, say $d = 2^l$. We proceed to compute this product in a binary tree fashion. Let $T(k)$ be the complexity of multiplying any $k$ of these numbers $a_i^{t_i}$ modulo~$N$. Note that $T(2k) \leq 2T(k) + O(M(mk))$ where $M(x)$ is the time needed to multiply two integers having at most $x$ bits. By the work of Harvey and van der Hoeven \cite{annalsmult}, it is known that $M(x) = O(x\log x)$, which leads to the bound
	\begin{equation*}
		T(d) \ll \sum_{j=0}^{l} 2^j M(md/2^{j+1}) \ll \sum_{j=0}^{l} md \log(md/2^{j+1}) \ll md (\log d) \log(md).
	\end{equation*}
	as claimed.
\end{proof}

We can turn this into a quantum circuit with the following well-known fact.

\begin{lemma}
	\label{lem:quantumconversion}
	Any classical circuit can be ``compiled'' into a reversible quantum circuit that carries out the same computations, with the number of gates and qubits used being proportional to the size of the classical circuit.
\end{lemma}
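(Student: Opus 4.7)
The plan is to give a constructive proof via the standard Bennett-style simulation of Boolean circuits by reversible circuits. First, I would reduce the given classical circuit to one that uses only gates from a universal Boolean gate set, say $\{\mathrm{NAND}\}$ (or $\{\mathrm{AND}, \mathrm{NOT}\}$); this reduction increases the gate count and wire count by at most a constant factor. Next, each $\mathrm{NAND}$ gate is simulated by a single Toffoli gate $(a,b,c) \mapsto (a, b, c \oplus (a \wedge b))$ together with one ancilla qubit initialised to $|1\rangle$: applying Toffoli to the triple $(a, b, 1)$ produces $(a, b, \neg(a \wedge b))$, which is exactly the $\mathrm{NAND}$ output placed on a fresh wire while leaving the inputs intact. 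This yields a reversible circuit whose depth, gate count, and width (number of ancillas) grow only by a constant factor compared to the original classical circuit.

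Since the Toffoli gate is itself a quantum gate, and in any standard universal quantum gate set (such as Clifford$+T$) can be implemented with $O(1)$ elementary gates, the resulting reversible circuit is a genuine quantum circuit. Its total gate count and qubit count are both $O(1)$ times the corresponding quantities for the original classical circuit, which is exactly what the lemma asserts.

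There is essentially no serious obstacle here, as this is a well-known textbook result originally due in spirit to Bennett; a detailed treatment can be found in the reference \cite{CNbook} already cited in this paper, and I would simply refer the reader there rather than reproducing the full construction. The only conceivable subtlety would arise if one wished to keep the ancilla count sub-linear via uncomputation tricks, but since the statement of the lemma merely requires a linear blow-up in qubits, we may afford one fresh ancilla per classical gate and no such care is needed.
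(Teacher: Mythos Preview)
Your proposal is correct and gives precisely the standard Bennett-style construction that underlies this well-known fact. The paper itself does not spell out any argument at all: its entire proof is a one-line pointer to \cite[Section~A.1]{MIT}, so your sketch is in fact more detailed than what appears in the paper, while describing the same construction that the cited reference contains.
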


\begin{proof}
	This well-known fact is explained in Section~A.1 of the full version of \cite{MIT}.
\end{proof}

\begin{lemma}
	\label{lem:generateprimes}
	Let $N$ be a sufficiently large integer. Let $d := \lceil \! \sqrt{\log N}\rceil$ and $X = d^{10^3d}$. Let $k = d^4$.

	Let $\n_1, \ldots, \n_k$ be i.i.d.~random variables uniformly distributed in $\{1, \ldots, X\}$. Then, the probability that at least $d$ of these $\n_i$ are prime numbers not dividing $N$ is $1 +O(1/N)$.
\end{lemma}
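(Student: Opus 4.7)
The plan is to first estimate the probability that a single $\n_i$ is a prime not dividing $N$, and then apply a Chernoff-type concentration bound on the number of such successes among the $k = d^4$ independent trials. Concretely, let $\mathbf{S} := \#\{i\in[k] : \n_i \text{ is prime and } \n_i\nmid N\}$. The goal is to show $\mathbb{P}(\mathbf{S} < d) = O(1/N)$.

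For a single trial, Chebyshev's elementary bound (or the prime number theorem) gives $\pi(X) \gg X/\log X$, and since $\log X = 10^3 d \log d$ this yields $\pi(X)/X \gg 1/(d \log d)$. The number of primes dividing $N$ is at most $\omega(N) \leq \log_2 N \leq d^2$, and $d^2/X$ is super-exponentially small in $d$, hence negligible compared to $\pi(X)/X$. Therefore the probability $p := \Prbis(\n_1 \text{ is prime and } \n_1\nmid N)$ satisfies $p \geq c/(d \log d)$ for some absolute constant $c > 0$, provided $N$ (and hence $d$) is sufficiently large.

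The variable $\mathbf{S}$ is a sum of $k = d^4$ i.i.d.~Bernoulli variables with $\E{\mathbf{S}} = pk \geq c\, d^3/\log d$, which exceeds $2d$ for $d$ large. By the standard multiplicative Chernoff bound $\Prbis(\mathbf{S} \leq \E{\mathbf{S}}/2) \leq \exp(-\E{\mathbf{S}}/8)$, we obtain
\begin{equation*}
	\Prbis(\mathbf{S} < d) \leq \exp\!\big(-\Omega(pk)\big) \leq \exp\!\big(-\Omega(d^3/\log d)\big).
\end{equation*}
Since $d = \lceil\!\sqrt{\log N}\rceil$ gives $d^2 \geq \log N$, we have $d^3/\log d \gg \log N$, so the right-hand side is $o(1/N)$, which is stronger than the required $O(1/N)$.

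No serious obstacle arises; this is a routine application of PNT and Chernoff. The only point meriting a moment of care is confirming that $\omega(N)$ is negligible compared to $\pi(X)$, which is immediate from the super-exponential size $X = d^{10^3 d}$ relative to $N \leq e^{d^2}$.
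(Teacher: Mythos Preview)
Your proof is correct and follows essentially the same approach as the paper: both establish the single-trial lower bound $p \gg 1/\log X$ and then bound the binomial tail $\Prbis(\mathbf{S} < d)$. The only cosmetic difference is that the paper computes this tail via a direct union bound over subsets of size $> k-d$, obtaining $\ll dk^d e^{-c(k-d)/\log X} \ll e^{-d^2}$, whereas you invoke a Chernoff inequality to get $\exp(-\Omega(d^3/\log d))$; both are standard and yield bounds far stronger than $O(1/N)$. (One tiny slip: $\log_2 N$ can exceed $d^2$ by a factor of $1/\log 2$, but this is irrelevant since you only need $\omega(N) \ll d^2 \ll X$.)
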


\begin{proof}
	Since $N$ has $\ll \log N$ prime factors, and since the number of primes $\leq X$ is $\gg X/\log X$, we have
	\begin{equation*}
		\Pr{\n_1 \text{ is prime and }\n_1\nmid N} \geq \frac{c}{\log X}
	\end{equation*}
	for some absolute constant $c>0$. Thus, if $E_i$ is the event that $\n_i$ is not prime or divides $N$, then by the union bound and independence,
	\begin{equation*}
		\Prbis \bigg(\bigcup_{\substack{I \subset [k]\\ |I|> k-d}} \bigcap_{i\in I} E_i\bigg) \leq \sum_{l< d} \binom{k}{l} \left(1-\frac{c}{\log X}\right)^{k-l} \leq dk^d e^{-c(k-d)/\log X} \ll e^{-d^2}
	\end{equation*}
	as needed.
\end{proof}

\subsection{The discrete logarithm problem}

We now prove \cref{thm:logarithm}. For convenience, we restate it here.

\thmlogarithm*

\begin{proof}[Proof of \cref{thm:logarithm}]
	Suppose we are given an integer $N>2$ and elements $g,y\in \mgp{N}$ such that $y$ lies in the subgroup generated by $g$ in $ \mgp{N}$. Let $d := \lceil\!\sqrt{\log N} \rceil$ and $X := d^{10^3d}$. Let $\b_1, \ldots, \b_d$ be i.i.d.~random variables, each uniformly distributed in the set of primes $\leq X$ not dividing $N$.

	Consider the random lattice
	\begin{equation*}
		\L_{\underline{\b},g,y} := \Big\{(e_1, \ldots, e_d, f_1, f_2) \in \Z^{d+2} \ : \ \Bigg(\prod_{i=1}^d \b_i^{e_i}\Bigg) g^{f_1} y^{f_2} \equiv 1 \!\!\pmod{N}\Big\}.
	\end{equation*}
	By \cref{thm:shortbasis}, with probability $1 +O(d^{-d})$, this lattice has a basis consisting of vectors of Euclidean norm $\ll e^{42d}$.

	Computing the discrete logarithm of $y$ with respect to the base $g$ reduces (with a polynomial-time classical algorithm) to computing a short basis for $\L_{\underline{\b},g,y}$. To see why this is the case, suppose that we managed to compute a basis $v_1, \ldots, v_{d+2}$ for $\L_{\underline{\b},g,y}$ with $\max_i \norm{v_i}_2 \ll e^{42d}$. It is then easy to find a vector in $\L_{\underline{\b},g,y}$ of the form $(0, \ldots, 0, x, 1)$ for some integer $x$ (note that such a vector exists since we assume that $y\in \inner{g}$). Indeed, this amounts to solving a linear system with integer coefficients. The complexity of solving an integer linear system is polynomial in the dimensions of the matrix and the number of bits of the coefficients, which are both $O(d)$ since $\max_i \norm{v_i}_2 \ll e^{42d}$ (see \cite{bareiss}). This yields an integer $x$ such that $g^x \equiv y \pmod{N}$.

	The algorithm for solving the discrete logarithm problem thus proceeds as follows.
	\begin{enumerate}
		\item Generate $d$ primes $b_1, \ldots, b_d$ independently and uniformly at random in the set of primes $\leq X$ not dividing $N$. To do this, it suffices to generate $d^4$ independent random integers $\leq X$. By \cref{lem:generateprimes}, the required conditions will be satisfied for at least $d$ of those with probability $1+O(1/N)$. This first step takes polynomial time on a classical computer using the AKS primality test~\cite{aks}.
		\item Use the procedure described by Eker{\aa} and G{\"a}rtner~\cite{discretelogs} to obtain a basis for the lattice $\L_{\underline{b},g,y}$. This involves making $O(d)$ calls to a quantum circuit, followed by polynomial-time classical post-processing. This step is now guaranteed to succeed with probability $\Theta(1)$, using the fact that $\L_{\underline{b},g,y}$ has a basis of vectors of Euclidean norm $\ll e^{42d}$ with probability $1 +O(d^{-d})$ (\cref{thm:shortbasis}).
		\item Compute the discrete logarithm of $y$ in classical polynomial time using this basis, as explained above.
	\end{enumerate}

	It remains to analyse the gate and space costs of the quantum part of this algorithm.

	The only modification needed to the analysis of the quantum circuit in~\cite{discretelogs} comes from the fact that $b_1, \ldots, b_d$ are not quite as small as in \cite{discretelogs}. In \cite{discretelogs} (and more generally in \cite{MIT,regev}), the primes $b_i$ are assumed to have $O(\log d)$ bits. In the present situation, we instead have the bound $b_i\leq X = d^{10^3d}$, i.e.~each $b_i$ has $O(d\log d)$ bits.

	The only place in \cite{discretelogs} where the assumption on the size of the $b_i$'s is used is in \cite[Lemma~3]{discretelogs}. In turn, the only point in the proof of \cite[Lemma~3]{discretelogs} where this assumption is needed is when \cite[Lemma~5]{MIT} is invoked (as a black box). The proof of \cite[Lemma~5]{MIT} is given in \cite[Section~5]{MIT}, and the size assumption on the $b_i$'s only comes up in \cite[Lemma~12]{MIT}.

	The result \cite[Lemma~12]{MIT} essentially\footnote{There are extra details pertaining to the precise use of qubits (e.g.~restoring the ancilla qubits to $|0\rangle$), but these will be the same in our setup.} states that there is a quantum circuit using $O(d \log^3d)$ gates and $O(d \log^3d)$ qubits to perform the computation of $\prod_{i=1}^d a_i^{t_i}$ where $t_i\in \{0, 1\}$ and $a_i$ are integers on $O(\log d)$ bits. This is a special case of \cref{lem:classicalmult} (used together with \cref{lem:quantumconversion}) applied with $m = O(\log d)$. In our case, we just apply \cref{lem:classicalmult} with $m = O(d\log d)$, together with  \cref{lem:quantumconversion} to convert the classical circuit into a quantum one. This yields a quantum circuit having $O(d^2\log^3 d)$ gates and $O(d^2\log^3 d)$ qubits to compute $\prod_{i=1}^d b_i^{t_i}$.

	The number of gates of the quantum circuit \cite[Lemma~5]{MIT} is
	\begin{equation*}
		O\big( d (n\log n + d\log^3 d) \big) = O(n^{3/2} \log n)
	\end{equation*}
	(because \cite[Lemma~12]{MIT} is used $O(d)$ times, see \cite[Algorithm~5.2]{MIT} and the surrounding explanations). Note that this is the same as for Regev's algorithm (see \cite[p5]{regev}). In our case, the number of gates is
	\begin{equation*}
		O\big( d (n\log n + d^2\log^3 d) \big) = O(n^{3/2} \log^3 n).
	\end{equation*}

	The space optimisations of Ragavan and Vaikuntanathan keep the total number of qubits for their circuit \cite[Lemma~5]{MIT} under
	\begin{equation*}
		O\big(n\log n + d\log^3 d\big) = O(n\log n),
	\end{equation*}
	due to the way the qubits are used and restored in the main loop of \cite[Algorithm~5.2]{MIT}. In our situation, the number of qubits is
	\begin{equation*}
		O\big(n\log n + d^2\log^3 d\big) = O(n\log^3 n).
	\end{equation*}

	In summary, our final quantum circuit has $O(n^{3/2}\log^3 n)$ gates and $O(n \log^3 n)$ qubits. As noted in~\cite{discretelogs}, the fact that the two elements $g$ and $y$ are not small ($g$ and $y$ can be as large as $N$, as opposed to the $b_i$'s) does not affect these complexity bounds.
\end{proof}

\begin{remark}
	\label{rem:spaceopti}
	In the proof of \cref{thm:logarithm}, we used the naive \cref{lem:quantumconversion} to convert a classical circuit into a quantum one. As mentioned in \cref{rem:saveonemorelog}, it is possible to save a factor of $\log n$ in the space cost for \cref{thm:logarithm,thm:factoring} by reusing certain qubits when performing the quantum computation corresponding to \cref{lem:classicalmult}.
\end{remark}

\subsection{Factoring integers}

In this section, we prove \cref{thm:factoring}.

\begin{lemma}
	\label{lem:shor}
	Let $N>1$ be an odd integer with at least two distinct prime factors. Let $\x$ be a random variable, uniformly distributed in $\mgp{N}$. Then $\inner{\x}$ contains a non-trivial square root of $1$ modulo $N$ with probability $\geq 1/2$.
\end{lemma}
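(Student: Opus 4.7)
The plan is to reduce the statement to a calculation about the $2$-adic valuations of the orders in the cyclic components of $\mgp{N}$ provided by the Chinese Remainder Theorem. Write $N = p_1^{a_1}\cdots p_k^{a_k}$ with $k \geq 2$ distinct odd primes, so that $\mgp{N} \cong \prod_i \mgp{p_i^{a_i}}$ by CRT, and each factor is cyclic of even order $\phi(p_i^{a_i}) = 2^{u_i} v_i$ with $v_i$ odd and $u_i\geq 1$. Under this identification, $\x$ corresponds to a tuple $(\x_1,\ldots,\x_k)$ whose components are independent and uniform in $\mgp{p_i^{a_i}}$. I would then set $r_i := \order(\x_i)$, write $r_i = 2^{s_i} t_i$ with $t_i$ odd, and let $r := \lcm(r_1,\ldots,r_k) = \order(\x)$.

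Next, I would establish the structural claim that $\inner{\x}$ contains a non-trivial square root of $1$ modulo $N$ if and only if the integers $s_i$ are not all equal. The only candidate for an element of order $2$ in the cyclic group $\inner{\x}$ is $\x^{r/2}$, which exists precisely when $r$ is even, i.e.\ when $s := \max_i s_i \geq 1$. A direct comparison of $2$-adic valuations shows that in each cyclic factor $\x_i^{r/2} \equiv 1 \pmod{p_i^{a_i}}$ when $s_i < s$ (since then $r_i \mid r/2$), whereas $\x_i^{r/2}$ has order exactly $2$ in the cyclic group $\mgp{p_i^{a_i}}$ when $s_i = s$, so it equals the unique involution $-1 \pmod{p_i^{a_i}}$. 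By CRT, $\x^{r/2} \equiv \pm 1 \pmod N$ thus occurs precisely when the $s_i$ all agree: either all equal to $0$ (so $r$ is odd and $\inner{\x}$ contains no involution) or all equal to some $s \geq 1$ (so $\x^{r/2} \equiv -1$). In every other case $\x^{r/2}$ is a non-trivial square root of $1$ modulo $N$.

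It then remains to show $\Pr{s_1 = \cdots = s_k} \leq 1/2$. A standard count in the cyclic group $\mgp{p_i^{a_i}}$ gives $\Pr{s_i = 0} = 2^{-u_i}$ and $\Pr{s_i = j} = 2^{j-1-u_i}$ for $1 \leq j \leq u_i$, so $\max_j \Pr{s_i = j} = 1/2$, attained at $j = u_i$ (using $u_i \geq 1$, which follows from $p_i$ being odd). Since the $s_i$ are independent,
\begin{equation*}
\Pr{s_1 = \cdots = s_k} \leq \Pr{s_1 = s_2} = \sum_{j} \Pr{s_1 = j}\Pr{s_2 = j} \leq \max_j \Pr{s_1 = j} = \tfrac{1}{2}.
\end{equation*}
This is where the hypothesis $k \geq 2$ (i.e.\ that $N$ has at least two distinct prime factors) is essential. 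There is no serious obstacle here: the only point requiring care is the $2$-adic valuation calculation in the second paragraph, where one must correctly identify the components of $\x^{r/2}$ as $+1$ or $-1$ depending on whether $s_i < s$ or $s_i = s$.
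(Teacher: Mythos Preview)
Your argument is correct and is exactly the standard proof that the paper defers to (the paper simply cites \cite[Appendix~A4.3]{CNbook} without giving details). The CRT decomposition, the identification of $\x^{r/2}$ as $\pm 1$ componentwise according to whether $s_i<s$ or $s_i=s$, and the probability bound via $\max_j \Pr{s_i=j}=1/2$ are all precisely as in the cited reference.
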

\begin{proof}
	This elementary number-theoretic fact is well-known -- it was already needed for Shor's algorithm \cite{shor}. See \cite[Appendix~A4.3]{CNbook} for a detailed proof.
\end{proof}

\thmfactoring*

\begin{proof}[Proof of \cref{thm:factoring}]
	We can assume that $N$ is odd and not a perfect prime power, as otherwise it is easy to factor $N$ in polynomial time with a classical computer (see \cite[Exercise~5.17]{CNbook}).

	Let $d := \lceil\!\sqrt{\log N} \rceil$ and $X := d^{10^3d}$. The probabilistic algorithm to find a non-trivial divisor of $N$ goes as follows.

	\begin{enumerate}
		\item Generate $d$ primes $b_1, \ldots, b_d$ independently and uniformly at random in the set of primes $\leq X$ not dividing $N$. Sample another integer $x$ uniformly chosen in $\mgp{N}$. As in the proof of \cref{thm:logarithm}, this can be done classically in polynomial time with probability $1+O(1/N)$.
		\item Use the algorithm of Eker{\aa} and G{\"a}rtner~\cite{discretelogs} to obtain a basis for the lattice
		      \begin{equation*}
			      \L_{\underline{b},x} := \Big\{(e_1, \ldots, e_d, f) \in \Z^{d+1} \ : \ \Bigg(\prod_{i=1}^d b_i^{e_i}\Bigg) x^f \equiv 1 \!\!\pmod{N}\Big\}.
		      \end{equation*}
		      This involves making $O(d)$ calls to a quantum circuit, followed by polynomial-time classical post-processing. By \cref{thm:shortbasis}, $\L_{\underline{b},x}$ has a basis of vectors of Euclidean norm $\ll e^{42d}$ with probability $1 +O(d^{-d})$, which means that this step is guaranteed to succeed with probability~$\Theta(1)$.
		\item Using the short basis for $\L_{\underline{b},x}$ computed in the previous step, find the vector of the form $(0, \ldots, 0, r)$ in $\L_{\underline{b},x}$ with $r\geq 1$ as small as possible. This involves solving a linear system with integer coefficients, which can be done efficiently as in the proof of \cref{thm:logarithm}. This integer $r$ is the order of $x$ in $\mgp{N}$. By \cref{lem:shor}, with probability $\geq 1/2$, this order $r$ will be even and the element $x^{r/2}$ will be a non-trivial square root of $1$ modulo~$N$. Hence, $N$ divides the product $(x^{r/2}-1)(x^{r/2}+1)$ but neither term individually, which implies that $\gcd(N,\, x^{r/2}-1 \!\!\mod{N})$ is a non-trivial divisor of $N$.
	\end{enumerate}

	The analysis of this algorithm is identical to the corresponding part of the proof of \cref{thm:logarithm}.
\end{proof}

\appendix
\begin{appendices}
	\section{Bounds for character sums over primes}
	\label{sec:appendix}

	In this appendix, we prove \cref{lem:charbound}. We start by stating the truncated explicit formula for $L(s, \chi)$.

	\begin{lemma}
		\label{lem:explicitformula}
		Let $q\geq 2$. Let $\chi$ be a non-principal Dirichlet character modulo $q$. For $x\geq T\geq 2$, we have
		\begin{equation*}
			\sum_{n\leq x} \chi(n) \Lambda(n) = -\sum_{\substack{\rho = \beta+i\gamma\\ \abs{\gamma} \leq T}} \frac{x^{\rho}-1}{\rho} + O\left(\frac{x \log^2(xq)}{T}\right)
		\end{equation*}
		where the sum runs over all non-trivial zeros $\rho$ of $L(s, \chi)$ with multiplicity.
	\end{lemma}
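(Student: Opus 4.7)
The plan is to prove the lemma by a standard application of Perron's formula combined with a contour shift, following the classical proof of the prime number theorem with characters (see e.g.\ Davenport, \emph{Multiplicative Number Theory}, Ch.~19, or Montgomery--Vaughan, \emph{Multiplicative Number Theory I}, \S 12.1). The peculiar shape $(x^{\rho}-1)/\rho$ in place of the more common $x^{\rho}/\rho$ strongly suggests working with the \emph{entire} kernel $(x^s - 1)/s$ rather than $x^s/s$; this choice automatically avoids a pole at $s = 0$ and absorbs the $-L'(0,\chi)/L(0,\chi)$ constant that would otherwise have to appear.

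First I would start from the Dirichlet series identity
\[
-\frac{L'(s,\chi)}{L(s,\chi)} = \sum_{n\geq 1} \chi(n)\Lambda(n) n^{-s} \qquad (\Re s > 1),
\]
and apply the truncated Perron formula on the vertical segment $[c - iT, c+iT]$ with $c = 1 + 1/\log x$. Using the standard bound $|L'/L(s,\chi)| \ll \log(qx)$ on $\Re s = c$ together with the usual truncation tail estimate, and noting that the extra $-1/s$ contribution (coming from the kernel being $(x^s-1)/s$ instead of $x^s/s$) can be absorbed into the error by closing its contour to the right -- where $-L'/L$ is holomorphic and $O(1)$ -- this yields
\[
\sum_{n \leq x} \chi(n)\Lambda(n) = -\frac{1}{2\pi i} \int_{c-iT}^{c+iT} \frac{L'(s,\chi)}{L(s,\chi)} \cdot \frac{x^s - 1}{s}\, ds + O\!\left(\frac{x \log^2(qx)}{T}\right).
\]

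Next I would shift the contour to a vertical line $\Re s = -1/2$, placed to the left of the critical strip but to the right of the first trivial zero at $s = -1$ or $s = -2$. Inside the resulting rectangle, the integrand has simple poles exactly at the non-trivial zeros $\rho$ with $|\gamma| \leq T$, each contributing a residue $(x^\rho - 1)/\rho$ (here the choice of kernel is crucial, since $(x^s - 1)/s$ is holomorphic and nonzero at each $\rho \neq 0$). The potential pole at $s = 0$ in the exceptional case $L(0,\chi) = 0$ contributes only $O(\log x)$, which is swallowed by the claimed error since $T \leq x$. Applying the residue theorem then gives the stated main term $-\sum_{|\gamma|\leq T} (x^\rho - 1)/\rho$ up to the three contour integrals on the other sides of the rectangle.

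The main obstacle will be bounding those remaining contour integrals, in particular on the horizontal segments $\Im s = \pm T$ (the left vertical side being easy: on $\Re s = -1/2$ one has $|x^s - 1|/|s| \ll 1/|t|$, and the functional equation bounds $|L'/L|$ by $O(\log(q|t|))$, giving a total contribution of only $O(\log^2(qT))$). For the horizontal parts, the key tool is the classical partial-fraction expansion
\[
\frac{L'(s,\chi)}{L(s,\chi)} = \sum_{\substack{\rho = \beta+i\gamma \\ |s - \rho| \leq 1}} \frac{1}{s - \rho} + O(\log(q(|\Im s|+2)))
\]
valid in the strip $-1 \leq \Re s \leq 2$, which follows from Hadamard's product representation of $L(s,\chi)$. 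Combined with the zero-counting estimate $\#\{\rho : |\gamma - T| \leq 1\} \ll \log(qT)$, one may perturb $T$ by a bounded amount so that every zero on the horizontal line lies at distance $\gg 1/\log(qT)$ from it; this gives $|L'/L(s,\chi)| \ll \log^2(qT)$ on $\Im s = T$, and the horizontal integrals are therefore bounded by $\ll x \log^2(qT)/T$, matching the target error. Combining the Perron identity, the residue contributions, and the bounds on the remaining contour segments yields the claimed formula.
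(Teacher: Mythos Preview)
Your proof sketch is correct and follows the standard route to the truncated explicit formula. The paper itself does not give a proof: it simply cites Koukoulopoulos, \emph{The Distribution of Prime Numbers}, Theorem~11.3, so your proposal is in fact more detailed than what appears in the paper. The argument you outline (Perron with the entire kernel $(x^s-1)/s$, contour shift, residue collection, partial-fraction bound for $L'/L$ on the horizontal segments after nudging $T$ away from ordinates of zeros) is precisely the classical proof one finds in Davenport or Montgomery--Vaughan, and is presumably what the cited reference does as well.

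One small remark: your justification for absorbing the $-1/s$ piece of the kernel by ``closing its contour to the right'' is not quite right as stated, since $1/s$ does not decay and the contour cannot literally be closed. The correct observation is simply that on $\Re s = 1 + 1/\log x$ one has $|L'/L(s,\chi)| \ll \log(qx)$ and $\int_{-T}^{T} |c+it|^{-1}\,dt \ll \log T$, so the integral $\tfrac{1}{2\pi i}\int_{c-iT}^{c+iT} \tfrac{L'/L(s,\chi)}{s}\,ds$ is $O(\log^2(qx))$, which is indeed dominated by $x\log^2(qx)/T$ since $T \leq x$. This is a cosmetic fix; the rest of the argument is sound. You may also want to note, for completeness, that when $\chi$ is imprimitive the extra zeros of $L(s,\chi)$ on $\Re s = 0$ (coming from the Euler factors at primes dividing $q$ but not the conductor) contribute at most $O(\log q \cdot \log x)$ to the sum over $\rho$, which is again absorbed by the error term.
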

	\begin{proof}
		This is \cite[Theorem~11.3]{kouk}.
	\end{proof}

	We will also need the following standard bound for the number of zeros of $L(s, \chi)$ in the critical strip at some height $t$.
	\begin{lemma}
		\label{lem:nzeros}
		Let $q\geq 1$ and let $\chi$ be a Dirichlet character modulo $q$. Let $t\in \R$. The number of zeros $\rho = \beta + i \gamma$ of $L(s, \chi)$ in the rectangle $0\leq \beta\leq 1$, $t\leq \gamma \leq t+1$ is $\ll \log(q(|t|+2))$, where zeros are counted with multiplicity.
	\end{lemma}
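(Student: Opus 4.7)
The plan is to apply Jensen's formula (a standard consequence of the argument principle) to $L(s,\chi)$ on a disc containing the rectangle in question. The argument reduces to a lower bound for $|L|$ at a convenient central point, together with a convexity-type upper bound on a slightly larger disc; dividing these gives the claimed logarithmic count.

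First I would assume $\chi$ is primitive without loss of generality, since an imprimitive $L(s,\chi)$ differs from its primitive inducing $L$-function by finitely many Euler factors $(1-\chi^*(p)p^{-s})$ with $p\mid q$, and these factors are nonvanishing in $\tfrac12 \leq \Re s \leq 1$ (and contribute at most $O(\log q)$ zeros elsewhere in the strip, which is absorbed in the bound). Fix $s_0 = 2 + i(t + \tfrac12)$ and choose radii $r < R$, both absolute constants, such that the disc $|s - s_0| \leq r$ contains the rectangle $\{0 \leq \beta \leq 1,\ t \leq \gamma \leq t+1\}$, while $R \leq 3$ (so the larger disc stays in, say, $\{\Re s \geq -1\}$). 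Concretely, $r = \sqrt{17}/2$ and $R = 2r$ would do.

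Next I establish the two ingredients. For the lower bound, the Euler product gives $|L(s_0,\chi)| \geq \prod_p (1 + p^{-2})^{-1} \geq \zeta(4)/\zeta(2) \gg 1$, so $\log |L(s_0,\chi)| \geq -C$ for an absolute constant $C$. For the upper bound, the standard convexity bound (Phragm\'en--Lindel\"of applied to $L(s,\chi)$, combined with the functional equation and Stirling's approximation for the gamma factors) yields $|L(s,\chi)| \ll (q(|\Im s| + 2))^A$ uniformly for $\Re s \geq -1$, for some absolute $A > 0$. On the circle $|s-s_0| = R$ we thus have $\log|L(s,\chi)| \ll \log(q(|t|+2))$.

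Finally, Jensen's formula (applied to the holomorphic, nonvanishing-at-$s_0$ function $L(s,\chi)$ on $|s-s_0| \leq R$) yields
\begin{equation*}
    \sum_{\substack{L(\rho,\chi) = 0 \\ |\rho - s_0| \leq R}} \log\frac{R}{|\rho - s_0|} = \frac{1}{2\pi}\int_0^{2\pi} \log|L(s_0 + Re^{i\theta},\chi)|\,d\theta - \log|L(s_0,\chi)|,
\end{equation*}
with zeros counted with multiplicity. Every zero in the rectangle lies in $|\rho - s_0| \leq r$, so each contributes at least $\log(R/r) = \log 2$ to the left-hand side. Combining with the two ingredients above bounds the right-hand side by $O(\log(q(|t|+2)))$, and dividing by $\log 2$ gives the claimed estimate. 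No serious obstacle is anticipated, as this is a textbook application of Jensen's formula; the only mildly technical point is invoking the convexity bound with explicit dependence on $q$ and $|t|$, which is routine.
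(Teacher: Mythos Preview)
The paper does not actually prove this lemma; it simply cites \cite[Theorem~10.17]{MV}. Your Jensen-formula argument is one of the standard textbook proofs of this fact (the other common route, closer to what Montgomery--Vaughan actually do, goes through the Hadamard product for the completed $L$-function and the partial-fraction expansion of $L'/L$). So there is little to compare: you have supplied a self-contained argument where the paper gives only a reference, and your approach is perfectly sound.

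Two small points worth tightening. First, your numerical choice $R = 2r = \sqrt{17} \approx 4.12$ contradicts your parenthetical claim that ``$R \leq 3$ (so the larger disc stays in $\{\Re s \geq -1\}$)''; in fact the disc reaches $\Re s \approx -2.12$. This is harmless---the convexity/functional-equation bound $|L(s,\chi)| \ll (q(|\Im s|+2))^A$ holds uniformly in any fixed half-plane $\Re s \geq -B$---but the arithmetic should be corrected (or take $s_0$ further to the right). Second, your reduction to primitive $\chi$ skips over the principal character: there the inducing primitive $L$-function is $\zeta(s)$, whose pole at $s=1$ may lie in your disc when $|t|$ is small, so Jensen does not apply directly. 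The fix is routine (apply Jensen to $(s-1)\zeta(s)$, or note that the lemma is only invoked in the paper for non-principal $\chi$), but it deserves a sentence.
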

	\begin{proof}
		This is \cite[Theorem~10.17]{MV}.
	\end{proof}

	\begin{proof}[Proof of \cref{lem:charbound}]
		By \cref{lem:explicitformula,lem:nzeros}, choosing $T = x^{1-\alpha} - 1$, we have
		\begin{equation*}
			\sum_{n\leq x} \chi(n) \Lambda(n) \ll \log(qx)  \sum_{\substack{t\in \Z\\ |t| \leq x^{1-\alpha}-1}} \max_{\substack{\rho = \beta+i\gamma\\ \abs{\gamma - t} \leq 1/2}} \abs{\frac{x^{\rho}-1}{\rho}} + x^{\alpha}\log^2(qx),
		\end{equation*}
		where the maximum is over all zeros of $L(s, \chi)$ in the specified region. Since $\abs{x^\rho}\leq x^{\alpha}$ for all zeros $\rho = \beta + i\gamma$ with imaginary part $|\gamma|\leq x^{1-\alpha}$ by our zero-free rectangle assumption, we can bound
		\begin{equation*}
			\abs{\frac{x^{\rho}-1}{\rho}} \ll \frac{x^{\alpha}}{1+|\gamma|}
		\end{equation*}
		(using for example that $\frac{x^{\rho}-1}{\rho} = \int_1^x t^{\rho-1} dt$ for $|\gamma|<1$). Thus, we obtain
		\begin{equation*}
			\sum_{n\leq x} \chi(n) \Lambda(n) \ll  \log(qx) x^{\alpha} \sum_{|t|\leq x} \frac{1}{1+|t|}+ x^{\alpha}\log^2(qx)\ll x^{\alpha} \log^2(qx).
		\end{equation*}
		Discarding perfect prime powers, which contribute $O(x^{1/2}\log x)$, and using partial summation, we get
		\begin{equation*}
			\sum_{p\leq x} \chi(p) \ll \frac{x^{\alpha} \log^2(qx)}{(1-\alpha)\log x}.
		\end{equation*}
		We may assume that $1-\alpha \geq \frac{1}{\log x}$, as otherwise \cref{lem:charbound} is trivial. If this is the case, we conclude that
		\begin{equation*}
			\frac{1}{\pi(x)} \sum_{p\leq x} \chi(p) \ll \frac{x^{\alpha} \log^2(qx)}{\pi(x)} \ll x^{-(1-\alpha)} \log^3 (qx)
		\end{equation*}
		as claimed.
	\end{proof}
\end{appendices}

\bibliography{version_4}
\bibliographystyle{amsplain}

\end{document}